\documentclass[a4paper,twoside, 11pt]{amsart}

\usepackage[utf8]{inputenc}
\usepackage[T1]{fontenc}
\usepackage[british]{babel}
\usepackage{lmodern}

\usepackage{amssymb,amsmath,amsthm,amscd,amsfonts}
\usepackage{mathrsfs}
\usepackage{graphicx}
\usepackage{enumerate}
\usepackage{verbatim}
\usepackage[all]{xy}
\usepackage{xcolor}

\selectfont

\theoremstyle{definition}
\newtheorem{thm}{Theorem}[section]
\newtheorem{defn}[thm]{Definition}
\newtheorem{lem}[thm]{Lemma}
\newtheorem{prop}[thm]{Proposition}
\newtheorem{cor}[thm]{Corollary}

\newtheorem{rem}[thm]{Remark}
\numberwithin{equation}{section}

\newcommand{\R}{\mathbb{R}}
\newcommand{\N}{\mathbb{N}}

\newcommand{\C}{\mathbb{C}}

\newcommand{\fL}{\mathcal{L}}
\newcommand{\Lc}{\mathrm{Ly}}

\newcommand{\B}{\mathcal{B}^{\infty}}
\newcommand{\BG}{\mathcal{B}^{\infty}(G)}
\newcommand{\Bc}{\mathcal{B}^{\infty}_c}
\newcommand{\fW}{\mathcal{W}}
\newcommand{\ftL}{\mathcal{\widetilde{L}}}

\newcommand{\om}{\omega}

\newcommand{\sg} {\sigma}

\newcommand{\supp}{\text{supp}}

\newcommand{\pf}{\textnormal{PF}}

\newcommand{\la}{\langle}
\newcommand{\ra}{\rangle}

\renewcommand{\subset}{\subseteq}

\definecolor{darkgreen}{rgb}{0.0, 0.5, 0.0}
\newcounter{timcomments}

\author[Anderson-Sackaney]{Benjamin Anderson-Sackaney}
\address{Department of Mathematics and Statistics, University of Victoria, Victoria, British Columbia, V8W 2Y2, Canada}
\email{bandersonsackaney@uvic.ca}
\thanks{BA-S was supported by NSERC Discovery Grant RGPIN-2026-04876 and NSERC Discovery Launch DGECR-2026-00022.}

\author[de Laat]{Tim de Laat}
\address{Department of Mathematics, Kiel University, Heinrich-Hecht-Platz 6, D-24118 Kiel, Germany}
\email{delaat@math.uni-kiel.de}

\author[Samei]{Ebrahim Samei}
\address{Department of Mathematics and Statistics, University of Saskatchewan, Saskatoon, Saskatchewan, S7N 5E6, Canada}
\email{ebrahim.samei@usask.ca}
\thanks{ES was partially supported by NSERC Discovery Grant RGPIN-2025-04833.}

\author[Wiersma]{Matthew Wiersma}
\address{Department of Mathematics and Statistics, University of Winnipeg, 515 Portage Avenue, Winnipeg, Manitoba, Canada  R3B 2E9}
\email{m.wiersma@uwinnipeg.ca}
\thanks{MW was partially supported by NSERC Discovery Grant RGPIN-2022-03062.}

\title{Furstenberg discretization with moment control and entropy rigidity}

\begin{document}

\begin{abstract}
Let $G$ be a locally compact group, let $\Gamma$ be a lattice in $G$, and let $\eta$ be a spread-out probability measure on $G$. A Furstenberg discretization of $\eta$ to $\Gamma$ is a probability measure $\mu$ on $\Gamma$ such that every $(G,\eta)$-stationary space is also $(\Gamma,\mu)$-stationary. For cocompact lattices $\Gamma$ and under mild regularity conditions on $\eta$, we construct Furstenberg discretizations satisfying finite moment conditions of arbitrary order with respect to arbitrary prescribed subadditive functions. Our approach relies on a rather general study of differential entropy and Lyapunov exponents in the setting of locally compact groups. We relate Lyapunov exponents to spectral radii in weighted $L^{1}$-algebras and prove vanishing results for weights coming from length functions. Additionally, we prove that if $G$ has the property of rapid decay or the Kunze--Stein property, this yields entropy formulas for stationary actions whose Koopman representation is weakly contained in the left-regular representation. As consequences, we obtain preservation results for Poisson boundaries under discretization and characterize Zimmer amenability of stationary actions in terms of weak containment and maximal Furstenberg entropy.
\end{abstract}

\maketitle

\section{Introduction} \label{sec:introduction}


Stationary actions originate in Furstenberg's seminal work \cite{Furstenberg1963a,FurstenbergNCRP} and nowadays constitute a central class of dynamical systems at the interface of ergodic theory, group theory, and probability theory. A key advantage over the purely measure-preserving setting is that, under mild assumptions, stationary probability measures always exist, even in the absence of invariant probability measures. The framework of stationary dynamical systems simultaneously covers measure-preserving actions, where the measure is invariant under the action by every group element, and boundary actions, such as (Furstenberg--)Poisson boundaries, where typically no invariant measure exists, but there is a canonical stationary measure encoding the asymptotic behaviour of the random walk. Since the Haar measure on a non-compact group is infinite, many natural dynamical systems cease to be measure preserving, but they still admit canonical stationary measures, which makes the stationary viewpoint particularly well suited for locally compact groups. Stationary actions have played a substantial role in several historical breakthroughs, including Margulis' superrigidity theorem \cite{Marg1} and the Nevo--Zimmer theorem
\cite{NevoZimmer2002}. For further details on stationary dynamical systems, we refer to the survey \cite{Furs-Glas 1}.

More precisely, let $G$ be a locally compact second countable (lcsc) group with fixed left Haar measure $m$, and let $\eta$ be a spread-out Borel probability measure on $G$. A measurable $G$-space is a standard Borel probability space $(X,\xi)$ together with a measurable action of $G$ on $(X,\xi)$. The action $(G,\eta) \curvearrowright (X,\xi)$ is called \textbf{stationary} if $\eta * \xi = \xi$, meaning that the measure $\xi$ is preserved ``on average'' under the group action; see Section \ref{subsec:stationarydynamicalsystemspoissonboundary} for details. In this case, we refer to $(X,\xi)$
as a $(G,\eta)$-\textbf{stationary space}.

Furstenberg's discretization method originates in his work on random walks on semisimple Lie groups \cite{Furstenberg1973}, in which he related random walks on $G$ to random processes on suitable lattices in $G$. This procedure provides a way to replace a random walk on $G$ by a random walk on a countable subgroup $\Gamma$ (e.g.~a lattice or a countable dense subgroup), while preserving, as far as possible, the stationary and boundary structure of the original system. Concretely, starting from a probability measure $\eta$ on $G$ and a homomorphism $\rho \colon \Gamma \to G$ with lattice or dense image, one constructs a probability measure $\mu$ on $\Gamma$ such that every $(G,\eta)$-stationary space becomes $(\Gamma,\mu)$-stationary via the action twisted by the homomorphism $\rho$. This allows us to study continuous actions and boundary phenomena of $G$ through models on $\Gamma$, and to transfer entropy values, Poisson boundary results, and rigidity phenomena between the continuous and the discrete settings. We now give the explicit definition of a Furstenberg discretization; we only give it for lattices, as that is the setting covered in this article.

\begin{defn}\label{D:Furstenberg discretization}
 Let $\Gamma$ be a lattice in a locally compact second countable group $G$ and $\eta$ a spread-out probability measure on $G$. A non-degenerate probability measure $\mu$ on $\Gamma$ is a \textbf{Furstenberg discretization} of $\eta$ if every $(G,\eta)$-stationary space is $(\Gamma,\mu)$-stationary.
\end{defn}
Equivalently, a non-degenerate probability measure $\mu$ is a Furstenberg discretization of $\eta$ if the Poisson boundary $(\Pi_\eta, \nu_\infty)$ of $(G,\eta)$ is $(\Gamma,\mu)$-stationary.

Furstenberg showed that whenever $G$ is compactly generated and $\eta$ is compactly supported, then $\eta$ admits a Furstenberg discretization \cite{Furstenberg1973} (see also \cite[Proposition VI.4.1]{Marg1}). As mentioned above, the Furstenberg discretization goes back to \cite{Furstenberg1973}; the above definition is a variation on the definition from \cite[Section 1.1]{Bjr-Hartman-Opp 1}. The latter, however, does not consider a lattice $\Gamma$ in $G$, but a homomorphism $\rho \colon \Gamma \to G$ with dense image (see also \cite[Remark 1.2]{Bjr-Hartman-Opp 1}).




For many applications the mere existence of a Furstenberg discretization is not sufficient; one also needs quantitative analytic and asymptotic control on the discretization measure $\mu$. In particular, it is natural to ask whether $\mu$ can be chosen to have finite moments with respect to a prescribed length (or, more generally, a prescribed subadditive function). Such moment conditions play a key role in many arguments on the behaviour of random walks, and they are highly relevant when one transfers analytic or probabilistic arguments between $G$ and $\Gamma$. This phenomenon already appears in classical discretization results. In the Lie group setting, the Furstenberg construction yields, under certain assumptions, discretization measures with finite first and second moments; see \cite[Theorem 2.16]{Creutz} and the references therein. Similarly, in the discretization of Brownian motion and harmonic measures, one finds measures with suitable integrability properties; see \cite[Theorem 1.3]{Alvarez} and the references therein.


This paper pursues two fundamental and closely related objectives. First, it transfers and generalizes the framework of methods developed in \cite{ASdLSW1} from the special case of discrete groups to the general setting of locally compact groups. This yields a characterization of stationary actions of lattices in groups with the rapid decay property or the Kunze-Stein property (see Section \ref{sec:preliminaries}), under the assumption that the associated Koopman representation is weakly contained in the left-regular representation. Second, we prove a Furstenberg discretization result with strong moment control for cocompact lattices. Under mild regularity assumptions on the original random walk, we construct discretizations with finite moments of arbitrary order with respect to every prescribed subadditive function on $G$ (when restricted to $\Gamma$). In this second part, we heavily rely on the new collection of methods developed in the first part.

As an important first result (see Theorem \ref{T:Square integrable-inverse Harich Chandra function-the general case}), we show that under a sufficiently fast decay assumption of the the Harish-Chandra function $\Xi_{X}$ associated with a stationary action $(G,\eta) \curvearrowright (X,\xi)$, the Furstenberg entropy of $(X,\xi)$ must be equal to the Furstenberg entropy of the $\eta$-random walk on its Poisson boundary. This result is a generalization of \cite[Theorem 6.2]{ASdLSW1}, which stated a weaker version of this for discrete groups with rapid decay. 

The aforementioned theorem enables us to characterize stationary actions of groups with property RD or the Kunze–Stein property when the Koopman representation is weakly contained in the left-regular representation. In doing so, we also generalize \cite[Theorem 6.4]{ASdLSW1}, which was established for countable discrete groups with rapid decay.

\begin{thm}\label{T:charc amen action-RD groups-in introduction} (Theorem \ref{T:charc amen action-RD groups})
Let $G$ be a second countable locally compact group, and let $\eta$ be a probability measure on $G$ with $\eta\in \BG$ such that $\eta$ has finite $n^{th}$-differential entropy, for all $n\in \N$.  Suppose that one of the following holds:\\
$(1)$ $(G,\fL)$ has property RD and $\eta$ has a finite $\log(1+\fL)$ moment.\\
$(2)$ $G$ has the Kunze-Stein Property.\\
Then 
for every $(G,\eta)$-stationary space $(X,\xi)$, the following are equivalent:
\begin{enumerate}[(i)]
    \item The space $(X,\xi)$ is a measure-preserving extension of the Poisson boundary of $(G,\eta)$.
    \item The action $G \curvearrowright (X,\xi)$ is amenable (in the sense of Zimmer).
    \item The representation $\pi_{X}$ is weakly contained in $\lambda_G$.
    \item $h_\eta(X,\xi)=h_\eta(\Pi_{\eta},\nu_{\infty})$.
\end{enumerate}
\end{thm}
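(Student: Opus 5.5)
I will prove the cycle (i)$\Rightarrow$(ii)$\Rightarrow$(iii)$\Rightarrow$(iv)$\Rightarrow$(i). The first two implications are general facts. The last two are where the hypotheses on $\eta$ and on $G$ (RD or Kunze--Stein) are used.

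\textbf{(i)$\Rightarrow$(ii).} The Poisson boundary $(\Pi_\eta,\nu_\infty)$ of a spread-out measure is a Zimmer-amenable $G$-space; this is Zimmer's theorem, in the form extended to spread-out measures on lcsc groups. A measure-preserving extension of an amenable action is again amenable, because amenability passes to extensions with relatively invariant measure. Hence $(X,\xi)$ is amenable.

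\textbf{(ii)$\Rightarrow$(iii).} For a Zimmer-amenable action with quasi-invariant measure, the Koopman representation $\pi_X$ on $L^2(X,\xi)$ is weakly contained in $\lambda_G$. This is the standard Kuhn / Anantharaman-Delaroche type result for lcsc groups, and I will quote it.

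\textbf{(iii)$\Rightarrow$(iv).} This is the core step, and I expect it to be the main obstacle. The plan is to show that weak containment in $\lambda_G$ forces the Harish-Chandra function $\Xi_X(g)=\langle \pi_X(g)1,1\rangle$ to decay fast enough for Theorem \ref{T:Square integrable-inverse Harich Chandra function-the general case} to apply.
\begin{itemize}
\item[(1)] Under RD: weak containment gives $\|\pi_X(f)\|\le\|\lambda_G(f)\|\le C\|f(1+\fL)^s\|_2$ for all $f$. A duality argument, as in the discrete case in \cite{ASdLSW1}, should then show that $\Xi_X(1+\fL)^{-s}$ lies in $L^2(G)$. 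This is the square-integrability hypothesis of that theorem. The finite $\log(1+\fL)$-moment of $\eta$ is what controls the polynomial weight in the entropy/Lyapunov comparison.
\item[(2)] Under Kunze--Stein: convolution $L^2*L^p\subset L^2$ for $p<2$. Combined with weak containment, this gives $\Xi_X\in L^{2+\varepsilon}(G)$ for every $\varepsilon>0$. I would check that this is the decay form required in the general theorem.
\end{itemize}
The delicate part is matching the decay of $\Xi_X$ with the finiteness of the $n$-th differential entropies. I would use the earlier relation between Lyapunov exponents and spectral radii in weighted $L^1$-algebras, together with the vanishing result for weights coming from length functions, to verify the hypotheses of Theorem \ref{T:Square integrable-inverse Harich Chandra function-the general case}. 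That theorem then gives $h_\eta(X,\xi)=h_\eta(\Pi_\eta,\nu_\infty)$.

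\textbf{(iv)$\Rightarrow$(i).} This is entropy rigidity for stationary spaces with finite entropy.
\begin{itemize}
\item[(1)] Every $(G,\eta)$-stationary space $(X,\xi)$ has a boundary map onto a $G$-factor $B$ of the Poisson boundary, via the Furstenberg--Poisson decomposition $\xi=\int\xi_\omega$. The fibre measures over $B$ are the relatively measure-preserving part.
\item[(2)] Entropy is monotone: $h_\eta(X,\xi)\le h_\eta(\Pi_\eta,\nu_\infty)$, with equality forcing the conditional entropy of the random walk relative to $X$ to vanish. Equivalently, the limit measures $g_1\cdots g_n\xi$ converge to point masses relative to $\Pi_\eta$.
\item[(3)] Using the Kaimanovich-type criterion in the locally compact setting, available since $\eta$ is spread-out with finite differential entropy, equality implies that $X\to\Pi_\eta$ is a measure-preserving extension.
\end{itemize}
Since the discrete-group version in \cite{ASdLSW1} proceeds this way, I will adapt it, replacing Shannon entropy by differential entropy throughout.
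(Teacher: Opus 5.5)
Your cycle of implications is the paper's proof. (i)$\Rightarrow$(ii) and (ii)$\Rightarrow$(iii) are quoted: amenability of the Poisson boundary plus permanence under extensions, and Anantharaman-Delaroche. (iii)$\Rightarrow$(iv) is Theorem~\ref{T:Square integrable-inverse Harich Chandra function-the general case} with $p_0=2$. Your duality argument is exactly the verification the paper leaves implicit: under RD it gives $\Xi_X(1+\fL)^{-d}\in L^2(G)\subseteq L^{2+}(G)$, using $\Xi_X\le 1$, and under Kunze--Stein it gives $\Xi_X\in L^{2+}(G)$ with $d=0$. The other hypotheses of that theorem are assumed outright, so your extra Lyapunov/spectral-radius step is unnecessary.

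For (iv)$\Rightarrow$(i), cite \cite[Theorem 4.4]{Furs-Glas 1} as the paper does, not your three-step sketch. The sketch presupposes a factor map $X\to\Pi_\eta$, and producing that map is precisely what the theorem supplies.
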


We also establish similar results on lattices in groups with property RD or the Kunze–Stein property, with respect to Poisson boundaries arising from Furstenberg discretizations. In this setting, the Furstenberg discretization measure provides a natural bridge between the boundary theory of the ambient group and that of its lattices.

\begin{thm}(Theorem \ref{T:Square integrable-inverse Harich Chandra function-Discretization})
Let $\Gamma$ be a lattice subgroup in a locally compact second countable group $G$, let $\eta \in \B_c(G)$, and let $\mu$ be a Furstenberg discretization of $\eta$. Suppose that one of the following two conditions is satisfied:
\begin{enumerate}[(i)]
    \item the group $G$ has property RD with respect to a length function $\fL$ and $\mu$ has both finite entropy and finite $\log (1+\fL)$-moment;
    \item the group $G$ has the Kunze-Stein property and $\mu$ has finite entropy. 
\end{enumerate}
If $(\Pi_\eta, \nu_\infty)$ is the Poisson boundary of $(G,\eta)$, then
\begin{align}\label{Eq:maximal entropy-lattice}
h_\mu(\Pi_\eta, \nu_\infty)=h(\Gamma,\mu).
\end{align}
In other words, $(\Pi_\eta, \nu_\infty)$ is the measure-preserving extension of the Poisson boundary of $(\Gamma,\mu)$ as a $\Gamma$-space. Moreover, if $\Gamma$ is cocompact, then $(\Pi_\eta, \nu_\infty)$ is the Poisson boundary of $(\Gamma,\mu)$ as a $\Gamma$-space.
\end{thm}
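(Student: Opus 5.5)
Since $\mu$ is a Furstenberg discretization of $\eta$, the Poisson boundary $(\Pi_\eta,\nu_\infty)$ is $(\Gamma,\mu)$-stationary. The plan is to apply the discrete-group entropy criterion: a $\mu$-stationary $\Gamma$-space whose Harish-Chandra function decays fast enough has maximal Furstenberg entropy $h(\Gamma,\mu)$. This is the lattice analogue of Theorem \ref{T:Square integrable-inverse Harich Chandra function-the general case}, as in \cite[Theorem 6.2]{ASdLSW1}. Concretely, write $\Xi_\Gamma(\gamma)=\langle \pi(\gamma)1,1\rangle$ for the Koopman representation of $\Gamma$ on $L^2(\Pi_\eta,\nu_\infty)$. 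We aim to show
\[
\sum_{\gamma\in\Gamma}\mu^{*n}(\gamma)\,\log\frac{1}{\Xi_\Gamma(\gamma)}\ \ge\ H(\mu^{*n})-o(n),
\]
or an equivalent square-summability statement for $\Xi_\Gamma$. The Jensen-type comparison between $h_\mu$ and the entropy of $\mu^{*n}$ then gives $h_\mu(\Pi_\eta,\nu_\infty)\ge h(\Gamma,\mu)$. The reverse inequality is the general upper bound for the entropy of any stationary space.

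The key steps are as follows.

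\textbf{Step 1: weak containment.} The Koopman representation of $G$ on $L^2(\Pi_\eta,\nu_\infty)$ is weakly contained in $\lambda_G$, because the Poisson boundary is Zimmer amenable and $\eta$ is spread out. Restricting to $\Gamma$, $\pi|_\Gamma\prec\lambda_G|_\Gamma$. Since $\Gamma$ is a lattice, $\lambda_G|_\Gamma$ is a multiple of $\lambda_\Gamma$ (decompose $L^2(G)$ along a fundamental domain). Hence $\pi|_\Gamma\prec\lambda_\Gamma$.

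\textbf{Step 2: decay of $\Xi_\Gamma$.}
\begin{enumerate}[(i)]
\item Under RD for $(G,\fL)$: $\Gamma$ inherits RD with respect to $\fL|_\Gamma$. Then the Haagerup inequality together with weak containment gives $\sum_\gamma \Xi_\Gamma(\gamma)^2(1+\fL(\gamma))^{-s}<\infty$ for some $s$. This is the standard Haagerup--Harish-Chandra estimate for positive-definite functions associated to cyclic vectors of representations weakly contained in $\lambda$.
\item Under Kunze--Stein: $\Xi_G$ lies in $L^{2+\epsilon}(G)$ for every $\epsilon>0$. Here the plan is to average over a fundamental domain $F$ (with compact closure near the identity) using continuity of $g\mapsto\pi(g)1$, and so obtain $\ell^{2+\epsilon}$-summability of $\Xi_\Gamma$ on $\Gamma$.
\end{enumerate}

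\textbf{Step 3: feeding the decay into the entropy computation.} In case (ii), Hölder's inequality plus finite entropy give
\[
\sum_\gamma \mu^{*n}(\gamma)\log \Xi_\Gamma(\gamma)^{-1}\ \ge\ \tfrac{1}{2+\epsilon}H(\mu^{*n})\cdot(\text{const})-O(1),
\]
via the usual Gibbs inequality argument. Letting $\epsilon\to0$ and running the argument of \cite[Theorem 6.2]{ASdLSW1} yields the equality. In case (i), the polynomial weight costs $s\sum_\gamma\mu^{*n}(\gamma)\log(1+\fL(\gamma))$. This is $O(\log n)$ by subadditivity of $\log(1+\fL)$ and the finite $\log(1+\fL)$-moment of $\mu$, so it is $o(n)$.

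\textbf{Step 4: the extension statements.} Maximal entropy means $(\Pi_\eta,\nu_\infty)$ is a measure-preserving extension of the Poisson boundary of $(\Gamma,\mu)$, by the entropy characterization of the Poisson boundary (Kaimanovich--Vershik type). In the cocompact case, we upgrade this to an isomorphism. By the equivalence of Theorem \ref{T:charc amen action-RD groups-in introduction}, or directly, a measure-preserving extension of a $\Gamma$-boundary that is also $\Gamma$-amenable must be trivial over the base when $\Gamma$ is cocompact. Concretely, one shows that $G$-boundary maps from the Poisson boundary of $\Gamma$ are induced: the Poisson boundary of $(\Gamma,\mu)$ carries a $G$-structure via $G\times_\Gamma$ induction with compact fibre $G/\Gamma$, so bounded $\mu$-harmonic functions on $\Gamma$ extend to $\eta$-harmonic functions on $G$. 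This identifies the two boundaries.

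I expect this last identification in the cocompact case to be the main obstacle. The entropy equality follows fairly mechanically once Step 2 is in place, whereas showing that the extension is trivial needs a genuine transfer of harmonic functions from $\Gamma$ to $G$. I would first try to do this by averaging over a compact fundamental domain, using that $\mu$ is built so that harmonicity is preserved.
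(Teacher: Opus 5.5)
Your overall architecture matches the paper's: amenability of the Poisson boundary gives weak containment, which gives decay of $\Xi$, followed by the Gibbs/Jensen comparison of Theorem~\ref{T:Square integrable-inverse Harich Chandra function-the general case} run on the discrete group $\Gamma$. However, the decay step fails as written. In Step 2(i), $\Gamma$ does not inherit RD with respect to $\fL|_\Gamma$. For $SL_2(\mathbb{Z})\subset SL_2(\mathbb{R})$ with the Riemannian length, $\fL$ grows only like $\log|n|$ on the amenable unipotent subgroup $\{u_n\}$. So $\fL$-balls contain exponentially many of its elements, and testing the RD inequality on their indicator functions gives a contradiction. For uniform lattices in higher rank, RD is Valette's open conjecture.

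The paper never uses RD for $\Gamma$ (nor $\pi|_\Gamma\prec\lambda_\Gamma$). RD for $G$ together with $\pi_{\Pi_\eta}\prec\lambda_G$ gives, by duality, $\Xi_{\Pi_\eta}/(1+\fL)^d\in L^2(G)$, and Lemma~\ref{lem:Square integrable-inverse Harich Chandra function-the general case} transfers this to $\Gamma$. So both cases must go through a $G$-to-$\Gamma$ transfer as in your (ii). But continuity of $g\mapsto\pi(g)1$ cannot provide it: it only gives $|\Xi(\gamma s)-\Xi(\gamma)|\le\|\pi(s)1-1\|$, an additive error that destroys summability where $\Xi$ is small. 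What is needed is a multiplicative bound $\Xi(\gamma)\le C\,\Xi(\gamma s)$ for $s$ in a compact neighbourhood $V$ of $e$, uniformly in $\gamma$. The paper gets this from the cocycle identity and uniform two-sided bounds on $\rho(s,\cdot)$ for $s\in V$; this is where $\eta\in\Bc(G)$ is genuinely used. Disjointness of the sets $\gamma V$ then yields $\sum_{\gamma}\Xi(\gamma)^p(1+\fL(\gamma))^{-pd}\lesssim\int_G\Xi^p(1+\fL)^{-pd}$.

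There are also two smaller slips in Step 3. Your first display needs a factor $\tfrac12$ in front of $H(\mu^{*n})$. Also, the error term $\sum_\gamma\mu^{*n}(\gamma)\log(1+\fL(\gamma))$ is not $O(\log n)$ in general: subadditivity only gives $O(n)$, and a bare log-moment does not improve this to $O(\log n)$. The $o(n)$ you actually need is Theorem~\ref{T:vanishing Lyap expo-log of length function}, a truncation argument.

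Second, the cocompact statement is not established. Amenability cannot force the extension to be trivial, because every measure-preserving extension of the Poisson boundary is amenable; this is (i)$\Rightarrow$(ii) in Theorem~\ref{T:charc amen action-RD groups}. The claim that bounded $\mu$-harmonic functions on $\Gamma$ extend to $\eta$-harmonic functions on $G$ is a restatement of the conclusion, not a route to it.

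The missing idea is to prove that $(\Pi_\eta,\nu_\infty)$ is a $\mu$-boundary. One then uses that a $\mu$-boundary with maximal Furstenberg entropy $h(\Gamma,\mu)$ is the Poisson boundary; equivalently, a measure-preserving extension of the Poisson boundary that is itself a boundary is trivial. The paper obtains $\mu$-proximality from $\eta$-proximality. If $t_i\nu_\infty\to\delta_y$, write $t_i=\gamma_is_i$ with $s_i$ in a compact fundamental domain $W$. By the same local boundedness of the cocycle, for all $\gamma\in\Gamma$ and $s\in W$,
$C\,\frac{d(\gamma^{-1}\nu_\infty)}{d\nu_\infty}\le\frac{d(s^{-1}\gamma^{-1}\nu_\infty)}{d\nu_\infty}\le D\,\frac{d(\gamma^{-1}\nu_\infty)}{d\nu_\infty}$.
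These bounds force $\gamma_i\nu_\infty\to\delta_y$. Cocompactness enters exactly through the compactness of $W$.
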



In the setting where the ambient group is a connected semisimple Lie group and the Furstenberg discretization on a lattice is induced by the unique $K$-invariant probability measure on the Poission boudary, we are able to give a complete characterization of stationary spaces with Koopman representation weakly contained in the left-regular representation. This provides a class of examples not covered by Theorem \ref{T:charc amen action-RD groups-in introduction} (i.e. Theorem \ref{T:charc amen action-RD groups}), since the corresponding lattices may fail to possess either property RD or the Kunze–Stein property.

\begin{thm}(Theorem \ref{T:Square integrable-inverse Harich Chandra function-lattice in Lie groups}) 
Let $G$ be a connected semisimple Lie group, let $\Gamma$ be a lattice in $G$,  and let $\eta\in \BG$ be such that the Poisson measure on $G/P$ is the unique $K$-invariant probability measure $\nu_\infty^K$. Suppose that $\mu$ is a Furstenberg discretization of $\eta$ to $\Gamma$ such that $\mu$ has both finite entropy and finite logarithmic moment with respect to the Riemannian metric $\fL$ on $G$. 
Then 
for every $(\Gamma,\mu)$-stationary space $(X,\xi)$, the following are equivalent:
\begin{enumerate}[(i)]
    \item The space $(X,\xi)$ is a measure-preserving extension of the Poisson boundary of $(\Gamma, \mu)$.
    \item The action $\Gamma \curvearrowright (X,\xi)$ is amenable (in the sense of Zimmer).
    \item The representation $\pi_{X}$ is weakly contained in $\lambda_\Gamma$.
    \item $h_\mu(X,\xi)=h_\mu(G/P,\nu^K_{\infty})$.
\end{enumerate}
\end{thm}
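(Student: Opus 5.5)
We prove the cycle of implications (i)$\Rightarrow$(ii)$\Rightarrow$(iii)$\Rightarrow$(iv)$\Rightarrow$(i). The first two implications are standard and need no structure of $G$.

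\textbf{(i)$\Rightarrow$(ii).} The Poisson boundary of $(\Gamma,\mu)$ is a Zimmer-amenable $\Gamma$-space by Zimmer's theorem. A measure-preserving extension of an amenable action is again amenable.

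\textbf{(ii)$\Rightarrow$(iii).} For an amenable action, the Koopman representation on $L^2(X,\xi)$ (Radon--Nikodym twisted, as in Section \ref{subsec:stationarydynamicalsystemspoissonboundary}) is weakly contained in $\lambda_\Gamma$. This is the Kuhn/Anantharaman-Delaroche result; its proof for discrete groups was recalled in \cite{ASdLSW1}.

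\textbf{(iv)$\Rightarrow$(i).} This is the entropy criterion. By the previous theorem (Theorem \ref{T:Square integrable-inverse Harich Chandra function-Discretization}, case of cocompact or at least the identification), $h_\mu(G/P,\nu^K_\infty)=h(\Gamma,\mu)$. To obtain this equality here, where $\Gamma$ need not have RD, I would redo that argument using the decay of the Harish-Chandra function of $G$ itself. Since $\nu_\infty^K$ is $K$-invariant, $\Xi_{G/P}$ is exactly Harish-Chandra's spherical function $\Xi$. It satisfies $\Xi(g)\le C(1+\fL(g))^d e^{-\rho(H(g))}$, and the Radon--Nikodym cocycle is $e^{-2\rho(H)}$ up to the Iwasawa projection. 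Consequently $\log \Xi$ controls the entropy integrand, and the logarithmic moment of $\mu$ kills the polynomial factor.

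Given $h_\mu(X,\xi)=h(\Gamma,\mu)$, the Kaimanovich--Vershik/Nevo--Zimmer entropy criterion shows that $X$ is a measure-preserving extension of the Poisson boundary. Finite entropy of $\mu$ is used exactly here.

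\textbf{(iii)$\Rightarrow$(iv).} This is the key step. We have $\xi=\mu*\xi$, and $h_\mu(X,\xi)=-\sum_\gamma \mu(\gamma)\int \log \frac{d\gamma^{-1}\xi}{d\xi}\,d\xi$. Jensen's inequality bounds this entropy above by a term involving $-\sum_\gamma\mu(\gamma)\log\langle \pi_X(\gamma)1,1\rangle$, suitably applied to square roots of the Radon--Nikodym derivatives, i.e.\ the Harish-Chandra-type function $\Xi_X(\gamma)$.

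Weak containment $\pi_X\prec\lambda_\Gamma$, together with $\Gamma<G$ a lattice, should give $\Xi_X(\gamma)\le \Xi(\gamma)$ up to controlled constants. The route is to induce: $\pi_X\prec\lambda_\Gamma$ implies $\mathrm{Ind}_\Gamma^G\pi_X\prec\lambda_G$. Positive-definite functions of representations weakly contained in $\lambda_G$, at the $K$-finite level, are dominated by $\Xi$ (Cowling--Haagerup--Howe). One would need to pass from $\gamma$ to $K\gamma K$ using a fundamental-domain averaging.

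This yields $h_\mu(X,\xi)\ge -2\sum\mu(\gamma)\log\Xi(\gamma)-o$, and the latter equals $h_\mu(G/P,\nu^K_\infty)$ by the spherical computation. Since $h_\mu(X,\xi)\le h(\Gamma,\mu)=h_\mu(G/P,\nu_\infty^K)$, the two are equal.

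The main obstacle is this comparison $\Xi_X\lesssim\Xi$ on $\Gamma$. The function $\Xi_X$ is only the cyclic vector $1$'s matrix coefficient of a $\Gamma$-representation, and getting $K$-finiteness requires the induction trick. Making the resulting inequality sharp enough that the logarithmic moment suffices is the delicate point. I would first try to prove it via the inequality $\langle \pi(g)v,v\rangle\le \|v\|^2\Xi(g)$ for $K$-fixed vectors in $\mathrm{Ind}\,\pi_X$, applied to the vector $1\otimes \mathbf 1_{F}$ for a fundamental domain $F$.
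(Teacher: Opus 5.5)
Your arguments for (i)$\Rightarrow$(ii), (ii)$\Rightarrow$(iii) and (iv)$\Rightarrow$(i) are the ones the paper uses. Like you, the paper takes $h_\mu(G/P,\nu^K_\infty)=h(\Gamma,\mu)$ from the identification of $(G/P,\nu^K_\infty)$ with the Poisson boundary of $(\Gamma,\mu)$, and then invokes the Furstenberg--Glasner criterion.

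The gap is in (iii)$\Rightarrow$(iv). That step rests entirely on a pointwise bound $\Xi_X(\gamma)\le C(1+\fL(\gamma))^d\,\Xi(\gamma)$ on $\Gamma$, which you do not establish and which your induction route cannot deliver. Weak containment $\pi_X\prec\lambda_\Gamma$ gives no pointwise control of an individual matrix coefficient, since on $\Gamma$ there is no $K$ to supply a finiteness condition. The vector $1\otimes\mathbf 1_F$ is not $K$-finite, so you must pass to $\mathrm{Ind}_\Gamma^G\pi_X$ and average over $K$. After doing so, the diagonal coefficient at $\gamma$ is an average of $\Xi_X(\beta)$ over the elements $\beta=f'^{-1}k_1\gamma k_2 f\in\Gamma\cap F^{-1}K\gamma KF$, not $\Xi_X(\gamma)$ itself. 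Now $\Xi_X$ only obeys Harnack-type comparisons under multiplication by elements from a fixed finite subset of $\Gamma$, while the number of such $\beta$ grows with $\gamma$. So you cannot isolate $\Xi_X(\gamma)$ from this average; moreover, for non-uniform $\Gamma$ the domain $F$ is not even compact. The bound does hold a posteriori: once the theorem is known, $X$ is a measure-preserving extension of $(G/P,\nu^K_\infty)$, so $\Xi_X=\Xi_{G/P}$ on $\Gamma$. But it is not available beforehand.

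What weak containment actually gives is \emph{summability}, and that is all the paper uses. By Boyer's theorem, $\ell^\infty(\Gamma,\sigma)\subseteq C^*_r(\Gamma)$ for $\sigma=(1+\fL)^d/\Xi_{G/P}$ with $d$ large. Compose with $C^*_r(\Gamma)\to C^*_{\pi_X}$ and pair with the state $\la\pi_X(\cdot)1,1\ra$, testing on finitely supported functions. This gives $\Xi_X\,\Xi_{G/P}\,(1+\fL)^{-d}\in\ell^1(\Gamma)$.

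Such an $\ell^1$ statement cannot be inserted into your chain, which compares $-\log\Xi_X$ with $-\log\Xi$ pointwise. The missing idea is to let the entropy of $\mu^{*n}$ itself absorb the summability, via Gibbs' inequality: if $\om^{-1}\in\ell^1(\Gamma)$, then $H(\mu^{*n})\le\sum_\gamma\mu^{*n}(\gamma)\log\om(\gamma)+\log\|\om^{-1}\|_1$ (Theorem \ref{T:Lyap Expo dominates entropy-inverse integrable weights}). Take $\om=(1+\fL)^d/(\Xi_X\Xi_{G/P})$; here $\Xi_X\Xi_{G/P}$ is the Harish-Chandra function of the diagonal action on $(X\times G/P,\xi\times\nu^K_\infty)$. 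Combined with the vanishing of the Lyapunov exponent of $1+\fL$ under the logarithmic moment, this is Theorem \ref{T:Square integrable-inverse Harich Chandra function-the general case} with $p_0=1$, and yields
$h(\Gamma,\mu)\le\liminf_n-\frac1n\sum_\gamma\mu^{*n}(\gamma)\bigl(\log\Xi_X(\gamma)+\log\Xi_{G/P}(\gamma)\bigr)$.
Jensen's inequality on each factor bounds the right-hand side by $\frac12h_\mu(X,\xi)+\frac12h_\mu(G/P,\nu^K_\infty)=\frac12h_\mu(X,\xi)+\frac12h(\Gamma,\mu)$. Hence $h_\mu(X,\xi)\ge h(\Gamma,\mu)$, and maximality of the Poisson boundary entropy gives (iv).
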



Finally, we show that Furstenberg discretizations of cocompact lattices can be chosen to satisfy strong moment bounds. Specifically, under mild regularity hypotheses on the original random walk, we construct discretization measures on the lattice satisfying finite moments of arbitrary order for all subadditive functions on the ambient group. Here we provide a short version of the theorem for the sake of clarity; see Theorem \ref{T:Furstenberg discretizasion-cocompact Lattice} for the expanded version.

\begin{thm}\label{T:Furstenberg discretizasion-cocompact Lattice-The short version} 
Let $G$ be a locally compact, noncompact group, let $\Gamma$ be a cocompact lattice in $G$, and let $\eta\in \Bc(G)$. 
Then there exists a probability measure $\mu$ on $\Gamma$  with $\supp\, \mu=\Gamma$ such that every $(G,\eta)$-stationary space is also $(\Gamma,\mu)$-stationary.
Furthermore, $\mu$ can be constructed to have finite $m$-moment, for every $m\in \N$, with respect to any subadditive function $\sg$ on $G$ (after it is restricted to $\Gamma$). 
\end{thm}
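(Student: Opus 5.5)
The plan is to follow Furstenberg's original construction via a measurable fundamental domain and a stopping-time/balayage argument. The quantitative input comes from the spectral-radius machinery for weighted $L^1$-algebras developed earlier. That machinery says the Lyapunov exponents of $\eta$ with respect to weights $(1+\sg)^m$ vanish, so convolution powers of $\eta$ grow at most subexponentially in the relevant weighted norms.

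\emph{Step 1: setup.} Fix a relatively compact Borel fundamental domain $F$ for $G/\Gamma$, so that $G=\bigsqcup_{\gamma\in\Gamma}F\gamma$ (or $\gamma F$, depending on the side convention). Let $\beta\colon G\to\Gamma$ be the associated cocycle, $g\in F\beta(g)$. Choose a probability density $f=\mathbf{1}_F/m(F)$, or a smoothed version in $\Bc(G)$, and set $\theta=f\,dm$. The idea is that any $(G,\eta)$-stationary $\xi$ also satisfies $\eta*\theta'*\xi$-type identities. We then construct a kernel sending $F$ back to $\Gamma$. Concretely, following Furstenberg (cf.\ \cite[VI.4.1]{Marg1}), write $\eta$ as a mixture using spread-outness: since $\eta\in\Bc(G)$, some convolution power $\eta^{*k}$ dominates $c\,\theta$-translates on a neighbourhood. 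One defines a Markov chain on $G$ that starts at $e$, moves by $\eta$, and stops with probability $c$ once it lands in a region where the density is bounded below, where it is "absorbed" into $F\gamma$. The law $\mu$ of the stopping lattice element $\gamma$ is then shown to satisfy $\mu*\xi=\xi$. This uses $\eta*\xi=\xi$ and an absorption identity $\theta*\xi=\xi$ on $F$-averaged stationary measures. Strictly, one replaces $\xi$ by $\theta*\xi$ and uses that $\theta$ corresponds to the conditional measure, as in Furstenberg.

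\emph{Step 2: full support and non-degeneracy.} I expect to get $\supp\,\mu=\Gamma$ by mixing in all powers $\eta^{*n}$ with geometrically decaying weights. Spread-outness together with compact support and noncompactness give that $\bigcup_n\supp\,\eta^{*n}$ meets every $F\gamma$ in positive measure, after replacing $\eta$ by $\sum 2^{-n}\eta^{*n}$. That replacement does not change the stationary spaces.

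\emph{Step 3: moments.} The stopping time $\tau$ has geometric tails, $P(\tau>n)\le(1-c)^n$. On the event $\tau=n$, $\sg(\gamma)\le\sg(g_n)+C_F$ by subadditivity and boundedness of $\sg$ on $\overline F$. Therefore
\[
\sum_\gamma\sg(\gamma)^m\mu(\gamma)\le\sum_n(1-c)^{n}\int(\sg+C)^m\,d\eta^{*n}.
\]
Since $\eta$ is compactly supported and $\sg$ is subadditive (hence bounded on compacta), $\int(1+\sg)^m\,d\eta^{*n}\le(1+nC')^m$ grows polynomially. This can also be read off from the vanishing Lyapunov exponent results. The series therefore converges for every $m$ and every $\sg$ simultaneously.

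I expect the main obstacle to be Step 1: making the stationarity of $\mu$ rigorous for \emph{every} stationary space, which requires the absorption constant $c$ and the region to be uniform over $F$. Cocompactness is what gives this uniformity, since $\overline F$ is compact and the density of $\eta^{*k}$ is bounded below on a compact neighbourhood translate. I would handle it first by reducing to the Poisson boundary, using the equivalence noted after Definition \ref{D:Furstenberg discretization}, and verifying the identity via harmonic functions on $\Gamma$ and $G$.
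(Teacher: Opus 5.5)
\textbf{The gap is in Step~1, and that step carries all of the content.}

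With $\theta=\mathbf{1}_F\,dm/m(F)$, the identity $\theta*\xi=\xi$ that you invoke is false in general. The measure $\xi$ is only $\eta$-stationary, and nothing makes it $\theta$-stationary. Suppose you stop the walk at some $g=\gamma f$ with $f\in F$ and record $\gamma$. Optional stopping then gives $\xi=\nu*\xi$, where $\nu$ is the law of the stopped position. Even if the conditional law of $f$ given $\gamma$ were exactly $\theta$, this would read $\xi=\mu*\theta*\xi$. That is stationarity for $\mu*\theta$, not for $\mu$. Replacing $\xi$ by $\theta*\xi$ does not repair this, since the claim concerns $\xi$ itself. The harmonic-function reformulation you propose has the same defect: you would get $h(\gamma)=\sum_{\gamma'}\mu(\gamma')\int h(\gamma\gamma'f)\,d\theta(f)$, an average of $h$ over $\gamma\gamma'F$, rather than $h(\gamma\gamma')$.

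\textbf{The missing idea is that the absorbed piece must be an exact left translate of a measure that fixes $\xi$.}

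\emph{Decomposition.} Take $\psi$ to be the continuous, compactly supported density of a suitable power $\eta^{*k_0}$, so that $\delta_\gamma*\psi*\xi=\delta_\gamma*\xi$. You then need a decomposition $\delta_g*\psi^{*n}=c\,\delta_\gamma*\psi+(1-c)\kappa_g$ with $\kappa_g\geq 0$. Equivalently, $\delta_s*\psi^{*n}\geq c\,\psi$ on $\supp\psi$ for $s=\gamma^{-1}g$.

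\emph{Domination lemma.} This is Margulis' Lemma~(A) (Lemma~\ref{L:Margulis extensions}). It requires $\psi(e)>0$ and that $\{\psi>0,\ \check{\psi}>0\}$ generates $G$. It must also be made uniform over $s$ in the compact set $\overline{F}$: $n$ bounded and $c$ bounded below. The paper achieves this with the lower semicontinuous $\tau$ and the locally bounded $\fL$ of Theorem~\ref{T:weight associated to a generating measure}. A lower bound for the density of $\eta^{*k}$ on some neighbourhood is not enough. You need a positive lower bound for $\psi^{*n}$ on all of $s^{-1}\supp\psi$, comparable to $\psi$ itself.

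\emph{Stationarity.} With this, your killed chain is essentially the paper's pair of operators $S$ and $T$, taking $\mu_0=\delta_e$. Cocompactness gives $\|Tf\|_1\geq C\|f\|_1$ for $f\geq 0$. Hence the surviving mass decays geometrically and $\mu*\xi=\xi$.

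\textbf{Once repaired, your Step~3 works and is simpler than the paper's induction on $m$.} After $n$ rounds the surviving mass is at most $(1-C)^{n-1}$. It is supported where $\sg\leq An+B$, by subadditivity, local boundedness of $\sg$, and the boundedly many $\psi$-steps per round. Use this deterministic support bound rather than integrating against $\eta^{*n}$. It yields all polynomial moments, and in fact exponential moments of small order. You should drop the Step~2 replacement of $\eta$ by $\sum 2^{-n}\eta^{*n}$. It destroys the compact support that this bound uses, and it is not needed.
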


We point out that a more general statement of the above theorem was claimed in \cite[Proposition 2.23]{badervigdorovich} without a detailed proof. However, this more general result is incorrect, as \cite[Section 1.4.(iv)]{benoistquint} gives a counterexample in the setting of non-cocompact lattices in $\mathrm{SL}_2(\R)$. We also do not see how the adaptation of the proof of \cite[Proposition VI.4.1]{Marg1} as suggested in order to obtain \cite[Proposition 2.23]{badervigdorovich} would work for our special case of cocompact lattices.

We also want to compare our result with the recent discretization theorem of Derimay \cite[Proposition~3.12]{derimay}. In the setting of reversible random walks on locally finite graphs, Derimay constructs discretizing measures with finite first moment and, for cocompact lattice actions, finite exponential moment with respect to the orbit metric. Our result is of a different nature: For cocompact lattices in locally compact groups, it yields Furstenberg discretizations with finite moments of every order with respect to any prescribed locally bounded subadditive function on the ambient group. Thus, Derimay's result is stronger for the natural orbit metric, whereas our construction is designed to allow more general ambient groups and more flexible moment controls. 
Finally, shortly before the submission of this article, Derimay informed the third author that he has extended the results of \cite{derimay} to the setting of general locally compact groups and expects to submit this work for publication in the near future. Since the methods employed in his approach appear to be substantially different from those used in the present paper, and in many respects complementary to them, we believe that there is significant value in having both proofs available in the literature.

\section*{Acknowledgements}
We are indebted to Amos Nevo for valuable discussions on Furstenberg discretizations in an early stage of this project and for bringing the question on moment conditions on the discretizations to our attention. We thank Uri Bader and Itamar Vigdorovich for discussions about the Furstenberg discretization and in particular about the issue regarding \cite[Proposition 2.23]{badervigdorovich}. The third author would also like to thank Antoine Derimay for informing him of this recent development and for explaining the ideas underlying his extension of \cite[Proposition~3.12]{derimay} to general locally compact groups.

\section{Preliminaries} \label{sec:preliminaries}

\subsection{Notation and conventions}\label{Notation and conventions}

Throughout this article, unless explicitly stated otherwise, $G$ always denotes a locally compact second countable (lcsc) group, $e$ its identity element, and $m$ a Haar measure on $G$. For convenience, integration of a function $f$ with respect to $m$ is denoted $\int_G f \, ds$.

A Borel probability measure $\eta$ on $G$ is called \textbf{non-degenerate} if its support generates a dense subsemigroup of $G$, and it is called \textbf{spread out} if it is non-degenerate and absolutely continuous with respect to $m$. The latter implies that $\eta$ has a well-defined density (or Radon-Nikodym derivative) $\rho:=\frac{d\eta}{ds}$.

The set $\B(G)$ denotes the spread-out Borel probability measures on $G$ with bounded density with respect to $m$, and $\Bc(G)$ denotes the spread-out Borel probability measures on $G$ with bounded density and compact support.

\begin{defn} \label{defn:lengthfunction}
A \textbf{subadditive function} on a locally compact group $G$ is a locally bounded (i.e.~bounded on compact sets) measurable function $\fL \colon G \to [0,\infty)$ satisfying  $\fL(st) \leq \fL(s) + \fL(t)$ for all $s,t \in G$. A subadditive function $\fL$ is called a \textbf{length function} if it also satisfies $\fL(e)=0$ and $\fL(s^{-1})=\fL(s)$ for all $s\in G$.
\end{defn}


Let $\alpha > 0$. A Borel probability measure $\eta$ on $G$ has \textbf{finite $\alpha$-moment} with respect to a subadditive function $\fL \colon G \to [0,\infty)$ if $\int_{G} \fL(s)^\alpha \, d\eta(s)<\infty$. If $\alpha=1$, the measure $\eta$ said to have \textbf{finite first moment} with respect to $h$ (or \textbf{finite first} $\fL$-\textbf{moment}).

\subsection{Random walks and entropy} \label{subsec:entropyrw}

Let $G$ be a lcsc group, $\eta$ a Borel probability measure on $G$, and $(Y_n)_{n \geq 1}$ a sequence of $G$-valued independent identically distributed random variables with law $\eta$. Set $W_0 := e$ and $W_n := Y_1\ldots Y_n$. Then $W_n$ is distributed with law $\eta^{*n}$ (the $n$-fold convolution product of $\eta$ with itself), where $\eta_1 \ast \eta_2$ is the unique Borel probability measure determined by $\int_G f(s) \, d(\eta_1 * \eta_2)(s) = \int_G \int_G f(tt') \, d\eta_1(t) \, d\eta_2(t')$. The random process $(W_n)_{n \geq 0}$ is called the $\eta$-\textbf{random walk} on $G$ (with starting point $e$).


\begin{defn}\label{D:differential entropy}
Let $G$ be a lcsc group, and $\eta$ a spread-out Borel probability measure on $G$. The $n^{\mathrm{th}}$ \textbf{differential entropy} of $\eta$ (which only depends on the law $\eta^{*n}$) is defined as
\[
	H(G,\eta^{*n}) := -\int_{G} \log \phi_n(s) \, d\eta^{*n}(s),
\]
where $\phi_n$ denotes the Radon-Nikodym derivative
\[
    \phi_n:=\frac{d\eta^{*n}}{ds}.
\] 
\end{defn}

If $H(G,\eta^{*n})$ is finite for every $n \in \N$, then, by \cite[page 253]{Derriennic II} (see also \cite[Theorem 2.5]{Forghani-Tiozzo}), the limit $\lim_{n\rightarrow \infty} \frac{1}{n} H(G,\eta^{\ast n}) =: h(G,\eta)$ exists. This limit is (mainly in the setting of countable discrete groups) referred to as the \textbf{Avez entropy} of the $\eta$-random walk.



For background and details on random walks and their entropy, we refer to \cite{Forghani-Tiozzo,Lyons-Peres,Zheng}.

\subsection{Stationary dynamical systems and the Poisson boundary} \label{subsec:stationarydynamicalsystemspoissonboundary}

Let $G$ be a lcsc group, let $\eta$ be a spread-out Borel probability measure on $G$, and let $(X,\xi)$ be a standard probability space equipped with a jointly measurable action $G \curvearrowright (X,\xi)$, i.e.~the map $G \times X \to X$ defining the action is measurable with respect to the product $\sigma$-algebra on $G \times X$. The measure $\xi$ is called $\eta$-\textbf{stationary} (and $(X,\xi)$ is called a $(G,\eta)$-\textbf{stationary space} and $(G,\eta) \curvearrowright (X,\xi)$ a \textbf{stationary dynamical system}) if
\[
    \eta * \xi = \xi;
\]
that is, $\int_G s\xi \, d\eta(s) = \xi$, where $s\xi(A)=\xi(s^{-1}A)$ for all Borel sets $A \subset X$. Stationary dynamical systems are automatically non-singular (i.e.~quasi-invariant).

A pivotal example of a stationary space is the \textbf{Poisson boundary} of an $\eta$-random walk on $G$. It is a probability space that reflects the asymptotic behaviour of the random walk, going back to Furstenberg \cite{Furstenberg1963a}; we define the Poisson boundary by means of the characterization given in \cite[Theorem 2.11]{badershalom} (see also \cite[Theorem 2.2]{boutonnethoudayer}).

Let $G$ be a lcsc group and $\eta$ a spread-out Borel probability measure on $G$. Let $H^{\infty}(G,\eta)$ denote the space of bounded measurable $\eta$-harmonic functions on $G$, i.e.~functions $h \in L^\infty(G)$ satisfying $h(s) = \int_G h(st) \, d\eta(t)$ for all $s \in G$, and consider the map
\[
    T \colon L^{\infty}(X,\xi) \to H^{\infty}(G,\eta), \quad f \mapsto Tf,
\]
where
\[
    Tf(s)=\int_X f(s x) \, d\xi(x).
\]
The Poisson boundary $(\Pi_{\mu},\nu_{\infty})$ is the unique (up to $G$-equivariant measure space isomorphism) $(G,\eta)$-stationary space $(X,\xi)$ for which the map $T$ is a $G$-equivariant isometric isomorphism.

For overviews of the Poisson boundary, including different characterizations, we also refer to \cite{Furstenberg1973,Kaim-Vers 1,Furm1,Forghani-Tiozzo}.

\subsection{Furstenberg entropy} The \textbf{Furstenberg entropy} of a $(G,\eta)$-stationary space $(X,\xi)$, which goes back to \cite{FurstenbergNCRP} (see also \cite{NevoZimmer,Kaim-Vers 1}), is defined as
\begin{align*} 
  h_\eta(X,\xi) := - \int_{G} \int_X  \log \rho(s,x) \, d\xi(x) d\eta(s),
\end{align*}
where
\begin{align} \label{Eq:Radon-Nikodym cocycle-defn}
    \rho(s,x):=\frac{d(s^{-1}\xi)}{d\xi}(x), \;\; s\in G, \; x\in X.
\end{align}
It is well known that $\rho$ satisfies the following cocycle relation:
\begin{align} \label{Eq:Radon-Nikodym cocycle-cocycle relations}
    \rho(st,x)=\rho(s,tx)\rho(t,x) \;\; \textrm{for all} \;\; s,t\in G \textrm{ and } x\in X.
\end{align}

We have $h_\eta(X,\xi)\geq 0$, with equality if and only if $\xi$ is $G$-invariant. Also, 
\begin{align} \label{Eq:maximality of Furstenberg boundary}
h_\eta(X,\xi) \leq h_\eta(\Pi_{\eta},\nu_{\infty}),
\end{align}
where $(\Pi_{\eta},\nu_{\infty})$ is the Poisson boundary of the $\eta$-random walk and $(X,\xi)$ is an arbitrary $(G,\eta)$-stationary space. Note that \eqref{Eq:maximality of Furstenberg boundary} is often stated in the literature with $(X,\xi)$ being a $\eta$-\emph{boundary}. However, from \cite[Theorem 4.3]{Furs-Glas 1}, it follows that it also holds for arbitrary $(G,\eta)$-stationary spaces. Moreover, for every $\eta$-boundary $(X,\xi)$, equality in \eqref{Eq:maximality of Furstenberg boundary} occurs if and only if there is a measurable $G$-equivariant isomorphism from $(X,\xi)$ onto $(\Pi_{\eta},\nu_{\infty})$. 

We close this section by recalling the following relation (see \cite{Kaim-Vers 1}) between the Avez entropy of the $\eta$-random walk and the Furstenberg entropy of the associated Poisson boundary: 
\begin{align} \label{Eq:Furs entropy as limit of diff entopy}
    h_\eta(\Pi_{\eta},\nu_{\infty}) = \lim_{n\rightarrow \infty} \frac{1}{n} H(G,\eta^{\ast n}).
\end{align}

\subsection{Weights on groups} \label{subsec:weightsongroups}
\begin{defn} \label{D:weight}
A \textbf{weight} on a locally compact group $G$ is a locally bounded (Borel) measurable function $\omega \colon G \rightarrow [a,\infty)$, for some $a>0$, that is weakly submultiplicative, i.e.~there exists $C>0$ such that for all $s,t\in G$,
\begin{align*} 
\om(st)\leq C \om(s)\om(t).
\end{align*}
Two weights $\om$ and $\widetilde{\om}$ on $G$ are said to be equivalent if there are $C_1,C_2>0$ such that $C_1\,\om \leq \widetilde{\om} \leq C_2\,\om$.
\end{defn}

\begin{rem} \label{R:growth of weights}
\
\begin{enumerate}[(i)]
\item Every weight $\om$ on $G$ is equivalent to a submultiplicative weight. Also, we can always rescale a weight to an equivalent weight with values in $[1,\infty)$. 
\item Weights on (compactly generated) groups are closely related to the notion of \emph{gauge}, as considered in \cite{Derriennic II} (see also \cite[Definition 5.1]{Forghani-Tiozzo}). 
Indeed, given a temperate subadditive gauge $\mathcal{G}=(\mathcal{G}_n)_{n \in \N}$ on $G$, the function $\om(s):=e^{|s|_\mathcal{G}}$, where $|s|_\mathcal{G}=\min\{n : s \in \mathcal{G}_n\}$, defines a submultiplicative weight on $G$. On the other hand, if $G$ is compactly generated with compact symmetric generating neighbourhood $Q$ of $e$ and $\om$ is a submultiplicative weight on $G$, then $\mathcal{G}_n:=\{s\in G : \om(s)\leq C^n \}$ defines a temperate subadditive gauge on $G$, where $C=\sup\{\om(s) : s\in S\}$. 
\end{enumerate}
\end{rem}

\begin{defn} \label{exm:polynomial weights}
Let $\fL \colon G\to [0,\infty)$ be a length function on $G$. For $d \geq 1$, the function
\[
    \mathcal{P}_{\fL}^{d} \colon s \mapsto (1+\fL(s))^d
\]
defines a weight on $G$. It is called the \textbf{polynomial weight} of degree $d$ with respect to $\fL$.
\end{defn}

\subsection{Weighted $L^p$-spaces}
Let $(X,\nu)$ be a $\sg$-finite measure space, and $\om \colon X \to (0,\infty)$ a positive measurable function on $X$. Such a function is called a weight function on the space $X$. The \textbf{weighted $L^p$-space} $L^p(X,\om)$ is defined as
\[
L^p(X,\omega) := L^p(X,\om^p \nu) = \{f \colon X \to \C \textrm{ measurable} : f\omega \in L^p(X,\nu)\}
\]
equipped with the norm
\[
    \|f\|_{p,\om} = \left( \int_X |f|^p \om^p d\nu \right)^{\frac{1}{p}}.
\]
Weighted $L^p$-spaces are very standard in functional analysis. In this article, we mainly consider weighted $L^p$-spaces for which the underlying measure space is a group; weights on groups (as covered in Section \ref{subsec:weightsongroups}) are indeed a special class of weight functions on measure spaces. 

We recall a result on weighted $L^p$-spaces \cite[Proposition 2.1]{ASdLSW1}; its proof uses (complex) interpolation methods.


\begin{prop} \label{prop:interpolationconsequence}
Let $(X,\nu)$ be a $\sg$-finite measure space, $\omega \colon X \to (0,\infty)$ a function, and $1< q_0\leq p_0< \infty$ with $\dfrac{1}{p_0}+\dfrac{1}{q_0}=1$. For every
$f\in L^1(X,\nu)\cap L^{q_0}(X,\omega^{1/p_0})$
with $\Vert f\Vert_{1}=1$, the mapping
\begin{equation*}
[p_0,\infty) \rightarrow \R, \ \ p \mapsto -p \, \log \Vert f\Vert_{q,\omega^{1/p}},
\end{equation*}
with $1< q \leq q_0 \leq p_0 \leq p < \infty$ with $\dfrac{1}{p}+\dfrac{1}{q}=1$, is an increasing function. Equivalently, we have that the mapping
\begin{equation*}
(1,q_0] \rightarrow \R, \ \ q \mapsto -p \, \log \Vert f\Vert_{q,\omega^{1/p}}
\end{equation*}
is a decreasing function.
\end{prop}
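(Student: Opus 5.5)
We reduce the statement to log-convexity of $L^r$-norms in $1/r$ (Riesz--Thorin/Hölder). Write $\|f\|_{q,\omega^{1/p}}^q=\int_X |f|^q\omega^{q/p}\,d\nu$. Since $1/p+1/q=1$, we have $q/p=q-1$, so
\[
\|f\|_{q,\omega^{1/p}}^q=\int_X |f|^q\omega^{q-1}\,d\nu=\int_X |f\omega|^{q-1}|f|\,d\nu .
\]
Introduce the probability measure $d\kappa:=|f|\,d\nu$, which is legitimate because $\|f\|_1=1$, and the function $g:=\omega$ (on $\{f\neq0\}$). Then
\[
\|f\|_{q,\omega^{1/p}}^q=\int_X (|f|\omega)^{q-1}\,d\kappa=\bigl\|\,|f|\omega\,\bigr\|_{L^{q-1}(\kappa)}^{q-1}.
\]
Also $p(q-1)=q$, hence
\[
-p\log\|f\|_{q,\omega^{1/p}}=-\frac{p}{q}(q-1)\log\bigl\||f|\omega\bigr\|_{L^{q-1}(\kappa)}=-\log\bigl\||f|\omega\bigr\|_{L^{q-1}(\kappa)}.
\]
The hypothesis $f\in L^{q_0}(X,\omega^{1/p_0})$ says exactly that $|f|\omega\in L^{q_0-1}(\kappa)$, so all quantities for $q\le q_0$ are finite.

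With $r:=q-1\in(0,q_0-1]$, the map $q\mapsto -p\log\|f\|_{q,\omega^{1/p}}$ becomes $r\mapsto -\log\|h\|_{L^r(\kappa)}$ with $h=|f|\omega$. On a probability space, Jensen's (or Hölder's) inequality gives $\|h\|_{L^{r}(\kappa)}\le\|h\|_{L^{r'}(\kappa)}$ for $0<r\le r'$. Hence $r\mapsto\|h\|_r$ is nondecreasing and $r\mapsto-\log\|h\|_r$ is nonincreasing. Since $q\mapsto r$ is increasing, the map $q\mapsto -p\log\|f\|_{q,\omega^{1/p}}$ is decreasing on $(1,q_0]$. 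Because $q\mapsto p=q/(q-1)$ is a decreasing bijection $(1,q_0]\to[p_0,\infty)$, the first formulation follows: $p\mapsto -p\log\|f\|_{q,\omega^{1/p}}$ is increasing on $[p_0,\infty)$. The two forms are equivalent by this reparametrization.

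The main subtlety is handling $\omega$ on the set where $f=0$ and the case $r<1$, where $\|\cdot\|_r$ is only a quasi-norm. The monotonicity of $L^r(\kappa)$-quasinorms still holds there: apply Jensen to the convex function $t\mapsto t^{r'/r}$, which shows $\bigl(\int h^r\,d\kappa\bigr)^{r'/r}\le\int h^{r'}\,d\kappa$. If one prefers to stay with the interpolation route the paper alludes to, the same conclusion follows from log-convexity of $1/r\mapsto\log\|h\|_r$ via Riesz--Thorin. The elementary Jensen argument above suffices, however, and it yields the monotonicity without needing the condition $q_0\le p_0$ beyond ensuring $q_0\le 2$.
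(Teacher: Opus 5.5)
Your proof is correct, and it takes a different, more elementary route than the paper. The paper does not prove the statement here; it cites \cite[Proposition 2.1]{ASdLSW1}, whose proof uses complex interpolation. The natural interpolation argument is as follows. Taking $\frac{1}{q}=(1-\theta)+\frac{\theta}{q_0}$ forces $\theta=p_0/p$, and the interpolated weight $(\omega^{1/p_0})^{\theta}$ is then exactly $\omega^{1/p}$. Interpolating between $L^1(X,\nu)$ and $L^{q_0}(X,\omega^{1/p_0})$ gives $\|f\|_{q,\omega^{1/p}}\le\|f\|_1^{1-\theta}\|f\|_{q_0,\omega^{1/p_0}}^{p_0/p}$, i.e.\ $-p\log\|f\|_{q,\omega^{1/p}}\ge-p_0\log\|f\|_{q_0,\omega^{1/p_0}}$ when $\|f\|_1=1$. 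Monotonicity follows by using intermediate exponents in place of $(q_0,p_0)$.

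You instead establish an exact identity, $-p\log\|f\|_{q,\omega^{1/p}}=-\log\|\,|f|\omega\,\|_{L^{q-1}(\kappa)}$ with $d\kappa=|f|\,d\nu$. This reduces the claim to the monotonicity of $L^r$-(quasi)norms on a probability space. It buys a self-contained proof in which the restriction on $q_0$ plays no role. Your closing remark is muddled on this point: $q_0\le p_0$ is equivalent to $q_0\le 2$, and your Jensen step works for every $r>0$, so neither condition is used.

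The same identity also yields part (ii) of Theorem~\ref{T:compare weighted and non-weighted Shannon entropy} almost immediately. Indeed, $\|h\|_{L^r(\kappa)}\to\exp\int\log h\,d\kappa$ as $r\to0^+$, and $-\int|f|\log(|f|\omega)\,d\nu$ is exactly the weighted differential entropy.

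The interpolation route, by contrast, produces a norm inequality between different spaces. That is the form the paper reuses for operator norms; compare \eqref{Eq:complex interpolation-convlution operators-norm relations} and the $\pf_q$ estimates in Theorem~\ref{T:Avez entropy-p conv operators-RD}. In that setting there is no analogue of your tilted probability measure $|f|\,d\nu$.
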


In the special setting of the underlying measure space being a group $G$ with weight $\om$, the space $L^1(G,\om):=\{f : f\om\in L^1(G)\}$, with the norm $\|f\|_{1,\om}:=\int_{G} |f(s)| \, \om(s) \, ds$, is a Banach algebra with respect to pointwise addition and convolution. This algebra is called the \textbf{weighted $L^1$-algebra} on $G$ with weight $\om$.



\subsection{The Kunze--Stein property}

\begin{defn}
    A locally compact group $G$ has the \textbf{Kunze--Stein property} if the convolution product $m \colon C_c(G) \times C_c(G) \to C_c(G), \; (f,g) \mapsto f \ast g$ extends to a bounded bilinear map $L^q(G) \times L^2(G) \to L^2(G)$ for all $q \in [1,2)$.
\end{defn}
Kunze and Stein established this property for the group $\mathrm{SL}(2,\mathbb{R})$ \cite{MR0163988}. Cowling generalized this result and proved that connected semisimple Lie groups with finite center have it \cite{MR0507240}. Additional classes of Kunze-Stein groups were provided in \cite{MR0507240}, \cite{MR0936361} and \cite{veca}.

\subsection{The property of rapid decay}

\begin{defn} \label{D:RD}
    A locally compact group $G$ has the \textbf{property of rapid decay} (also called \textbf{property RD}) with respect to a length function $\mathcal{L}$ if there exists $d \in \N$ such that the canonical inclusion $C_c(G) \hookrightarrow C^*_{r}(G)$ extends to a bounded linear map $L^2(G,\om) \to C^*_{r}(G)$, where $C^*_{r}(G)$ is the reduced $C^*$-algebra of $G$ and $\om$ is the weight on $G$ given by $\om(s)=(1+\mathcal{L}(s))^d$.
\end{defn}


The property of rapid decay was first established for the non-abelian free groups $\mathbb{F}_n$, with $n \geq 2$, by Haagerup \cite{Haagerup}; its systematic study goes back to \cite{Joli}.

In the continuous setting, it is known that semisimple Lie groups with finite center have property RD \cite[Th\'eor\`eme 1]{herz1}. More generally, a connected Lie group has property RD if and only if its solvable radical has polynomial volume growth \cite[Theorem 0.1]{chatterjipittetsaloffcoste}.

\section{Computing the Lyapunov exponent and entropy}

We recall the Lyapunov exponent of a random walk on a locally compact group and prove a vanishing result for this quantity. Subsequently, we study the notion of weighted differential entropy, and we highlight its relation with the Lyapunov exponent. Finally, we generalize the notion of convolution entropy to the locally compact setting. These results are mostly generalizations of the discrete case, as covered recently in \cite{ASdLSW1}. Most of the proofs are omitted, as they are straightforward modifications of the discrete case.

\subsection{Lyapunov exponent and its vanishing}
As before, $G$ denotes a locally compact second countable group. 

\begin{defn}\label{D:Lyap expo}
Let $\om$ be a weight on $G$, and let $\eta$ be a Borel probability measure with finite first $\log \om$-moment (that is, $\displaystyle\int_{G} \log \om (s)d\eta(s) < \infty$). The \textbf{Lyapunov exponent} of $\eta$ with respect to $\om$ is defined as
\begin{align*}
\Lc_\om(G,\eta):=\displaystyle\lim_{n\rightarrow \infty}\dfrac{1}{n}\int_{G}\log \omega (s) \, d\eta^{\ast n}(s).
\end{align*}
\end{defn}
Since this is an asymptotic quantity, it may be more appropriate to view this quantity as an invariant of the $\eta$-random walk rather than of the measure $\eta$.

To the knowledge of the authors, this version of the Lyapunov exponent for random walks on groups goes back to the work of Nevo \cite{Nevo1}. Nevo attributes this notion of Lyapunov exponent to Margulis \cite[pp.~189-190]{Marg1}, who uses it implicitly and who in turn refers to \cite{G1,Ledrappier1982} for other uses of this quantity.

Let $\om$ be a weight on $G$ and $\eta$ a Borel probability measure on $G$ with finite first $\log \om$-moment. Analogous to the discrete case, it can be shown that the Lyapunov exponent exists and is well defined (see \cite[Lemma 3.2]{ASdLSW1}). Also, $\Lc_{\om}(G,\eta)$ is a non-negative number, and if $\widetilde{\om}$ is a weight that is equivalent to $\om$, then $\Lc_{\widetilde{\om}}(G,\eta)=\Lc_\om(G,\eta)$ (see \cite[Proposition 3.3]{ASdLSW1}). 

Finite first $\log \om$-moment ensures the existence of the Lyapunov exponent of $\eta$. If we consider a slightly stronger integrability condition, we obtain an alternative formula for $\Lc_\om(G,\eta)$ in terms of the asymptotic behaviour of the spectral radius of $\eta$ in an appropriate ascending class of weighted group algebras. The proof of this result in the discrete setting is given in \cite[Theorem 3.5]{ASdLSW1}. First, given a weight $\om$ on $G$ and $p\geq 1$, let $\om_p$ denote the weight defined by 
\begin{align}\label{Eq:defn-weight interpolation in Lp}
    \om_p(s) := \om(s)^{\frac{1}{p}}.
\end{align}
\begin{prop} \label{prop:Lyap expo-weighted L1}
Let $\om$ be a weight on $G$, and let $\eta$ be a Borel probability measure on $G$ with $\eta \in \displaystyle\bigcup_{p\geq 1}L^1(G,\omega_p)$. Then $\eta$ has a finite Lyapunov exponent with respect to $\om$, and we have
\begin{align*} 
\Lc_\om(G,\eta):=\lim_{p \rightarrow \infty} p\log r_{L^1(G,\omega_p)}(\eta),
\end{align*}
where $r_{L^1(G,\omega_p)}(\eta)$ is the spectral radius of $\eta$ in the Banach algebra $L^1(G,\omega_p)$.
\end{prop}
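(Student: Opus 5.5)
The plan is to reduce everything to the cumulant-generating functions of $\log\om$ along the random walk and to exploit convexity together with subadditivity. By Remark \ref{R:growth of weights}(i), I first replace $\om$ by an equivalent submultiplicative weight with values in $[1,\infty)$. This is harmless for both sides of the claimed identity. The Lyapunov exponent is unchanged by \cite[Proposition 3.3]{ASdLSW1}. Moreover, for each $p$ the weights $\om_p$ and $\widetilde\om_p$ are equivalent, so $L^1(G,\om_p)$ and $L^1(G,\widetilde\om_p)$ coincide as Banach algebras with equivalent norms and have the same spectral radii. Since $\eta\ge 0$ and $\om$ is now submultiplicative, Gelfand's formula gives
\[
r_{L^1(G,\om_p)}(\eta)=\lim_{n\to\infty}\Big(\int_G \om(s)^{1/p}\,d\eta^{*n}(s)\Big)^{1/n}.
\]

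Set $f_n(t):=\log\int_G \om^{t}\,d\eta^{*n}$ for $t\ge 0$. By hypothesis there is $p_0$ with $\eta\in L^1(G,\om_{p_0})$, so $\int\om^{t_0}\,d\eta<\infty$ for $t_0=1/p_0$. Submultiplicativity then gives $f_{n+m}(t)\le f_n(t)+f_m(t)$ for $t\in[0,t_0]$, and in particular $f_n(t)\le nf_1(t)<\infty$ there. Hence $F(t):=\lim_n f_n(t)/n=\inf_n f_n(t)/n$ exists, and $p\log r_{L^1(G,\om_p)}(\eta)=F(1/p)/(1/p)$.

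Each $f_n$ is convex in $t$ by H\"older's inequality, and $f_n(0)=0$. Hence $F$ is convex with $F(0)=0$, so $t\mapsto F(t)/t$ is nondecreasing; this is also the content of Proposition \ref{prop:interpolationconsequence}. Consequently the limit as $p\to\infty$ exists and equals $\lim_{t\downarrow0}F(t)/t$. This also shows that $\eta$ has finite first $\log\om$-moment, since $\log\om\le \om^{t_0}/t_0$. Therefore $\Lc_\om(G,\eta)$ is well defined, and the sequence $a_n:=\int\log\om\,d\eta^{*n}$ is subadditive with $\Lc_\om(G,\eta)=\lim_n a_n/n=\inf_n a_n/n$.

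\emph{Lower bound.} Jensen's inequality gives $f_n(t)\ge t\,a_n$, hence $F(t)/t\ge \lim_n a_n/n=\Lc_\om(G,\eta)$ for every $t\in(0,t_0]$.

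\emph{Upper bound.} For each fixed $n$ and $t>0$ we have $F(t)/t\le f_n(t)/(nt)$. Letting $t\downarrow 0$, I claim $f_n(t)/t\to f_n'(0^+)=a_n$. This uses that $(\om^t-1)/t$ decreases monotonically to $\log\om$ as $t\downarrow0$ and is dominated by $(\om^{t_0}-1)/t_0\in L^1(\eta^{*n})$, together with $\log(1+x)\sim x$. Thus $\lim_{t\downarrow0}F(t)/t\le a_n/n$ for every $n$, and taking the infimum over $n$ gives the bound $\Lc_\om(G,\eta)$.

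The main point needing care is this interchange of limits in the upper bound: fix $n$ first, use $F\le f_n/n$ (valid because $F$ is an infimum), and only then let $t\to0$. This is exactly where subadditivity, and hence the reduction to a submultiplicative weight, is essential. One also has to check that the monotone/dominated convergence argument for the right derivative of $f_n$ at $0$ is legitimate, which relies on $\eta^{*n}\in L^1(G,\om_{p_0})$.
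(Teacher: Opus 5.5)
Your proof is correct and follows essentially the route the paper intends; the paper omits the proof and defers to the discrete case \cite[Theorem 3.5]{ASdLSW1}, whose argument carries over verbatim. The steps are the same: Gelfand's formula and submultiplicativity express $p\log r_{L^1(G,\om_p)}(\eta)$ as an infimum over $n$, Jensen gives the lower bound $\Lc_\om(G,\eta)$, and for fixed $n$ the limit $p\log\int\om^{1/p}\,d\eta^{*n}\to\int\log\om\,d\eta^{*n}$ gives the matching upper bound before letting $n\to\infty$. One cosmetic slip: Proposition~\ref{prop:interpolationconsequence} concerns the $L^q$-norms $\|f\|_{q,\om^{1/p}}$, not the $L^1(G,\om_p)$-norms, but your convexity argument proves the monotonicity directly, and in fact your two bounds alone already force the limit to exist.
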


We finish this section by highlighting a vanishing result for the Lyapunov exponent. 
The proof is analogous to the case of discrete group given in \cite[Theorem 3.6]{ASdLSW1}, so again, we omit it. We wish to highlight, however, that it relies on a major classical result in harmonic analysis and potential theory, namely the theorem of de la Vall\'{e}e Poussin. For more details, we refer to \cite[Theorems 3.6 and 3.7 and Appendix A]{ASdLSW1}.

\begin{thm} \label{T:vanishing Lyap expo-log of length function}
Let $\fL$ be a subadditive function on $G$, and let $\eta$ be a probability measure on $G$ with finite $\log(1+\fL)$-moment. Then the Lyapunov exponent of $\eta$ with respect to $1+\fL$ vanishes. In other words,
\begin{align*}
\Lc_{1+\fL}(G,\eta)=\displaystyle\lim_{n\rightarrow \infty}\dfrac{1}{n}\int_{G} \, \log (1+\fL(s))\,d\eta^{\ast n}(s)=0.
\end{align*}
\end{thm}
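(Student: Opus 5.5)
Write $a_n := \int_G \log(1+\fL(s))\,d\eta^{\ast n}(s)$. I will show $a_n = o(n)$, working directly with the random walk $W_n = Y_1\cdots Y_n$ from Section \ref{subsec:entropyrw}.

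First, subadditivity of $\fL$ gives
\[
\fL(W_n)\le \sum_{i=1}^n \fL(Y_i).
\]
Since $1+\fL(W_n) \le 1+\sum_{i=1}^n \fL(Y_i)$, we get
\[
a_n = \mathbb{E}\log(1+\fL(W_n)) \le \mathbb{E}\log\Bigl(1+\sum_{i=1}^n \fL(Y_i)\Bigr),
\]
where the $Z_i:=\fL(Y_i)\ge 0$ are i.i.d.\ with $\mathbb{E}\log(1+Z_1)<\infty$. The statement is therefore reduced to a purely real-variable fact: if $\mathbb{E}\log(1+Z)<\infty$, then $\frac1n\mathbb{E}\log(1+Z_1+\dots+Z_n)\to 0$. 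This is the key lemma.

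To prove the lemma, fix a threshold $M>0$ and split each $Z_i = Z_i\mathbf 1_{Z_i\le M} + Z_i\mathbf 1_{Z_i>M}$. I will use the elementary inequality $\log(1+x+y)\le \log(1+x)+\log(1+y)$ for $x,y\ge0$. This inequality applied once gives
\[
\log\Bigl(1+\sum Z_i\Bigr)\le \log(1+nM)+\log\Bigl(1+\sum_i Z_i\mathbf 1_{Z_i>M}\Bigr).
\]
Iterating the same inequality over the summands bounds the last term by $\sum_i \log(1+Z_i)\mathbf 1_{Z_i>M}$. Taking expectations and dividing by $n$ yields
\[
\frac{a_n}{n} \le \frac{\log(1+nM)}{n} + \mathbb{E}\bigl[\log(1+Z_1)\mathbf 1_{Z_1>M}\bigr].
\]
Letting $n\to\infty$ and then $M\to\infty$ gives $\limsup a_n/n \le \varepsilon(M)$. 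Here $\varepsilon(M)\to0$ by dominated convergence, using the finite $\log(1+\fL)$-moment.

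This argument also avoids the de la Vallée Poussin route cited from \cite{ASdLSW1}. There, one would instead choose a slowly growing convex function to upgrade integrability and compare with $(1+\fL)^{1/p}$ moments. The elementary truncation above seems sufficient.

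It remains to check that the Lyapunov exponent $\Lc_{1+\fL}$ is meaningful, i.e.\ that $1+\fL$ is a weight. It is locally bounded and takes values in $[1,\infty)$. It is submultiplicative, since $1+\fL(st)\le 1+\fL(s)+\fL(t)\le(1+\fL(s))(1+\fL(t))$. Hence $\Lc_{1+\fL}(G,\eta)$ exists by the discussion after Definition \ref{D:Lyap expo} and is non-negative. Combined with $\limsup a_n/n\le 0$, this gives the claimed value $0$.

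The main obstacle I anticipate is measurability and integrability bookkeeping. One must check that $\fL(W_n)$ is measurable, which holds since $\fL$ is Borel and multiplication is continuous. One must also check that $a_n$ is finite, which follows from the bound above. Beyond this, the core difficulty is the real-variable lemma, which the truncation handles.
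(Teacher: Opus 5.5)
Your proof is correct, and it takes a genuinely different route from the one the paper has in mind. The paper omits the argument and defers to the discrete case \cite[Theorem 3.6]{ASdLSW1}, which it says rests on the de la Vall\'ee Poussin theorem. In that approach the finite $\log(1+\fL)$-moment is upgraded to a finite $\Phi(\log(1+\fL))$-moment for some superlinear convex $\Phi$, and this extra integrability is then exploited within the weighted-group/weighted-algebra framework of that paper.

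You instead argue as follows:
\begin{itemize}
\item You use subadditivity pathwise to dominate $\fL(W_n)$ by the i.i.d.\ sum $Z_1+\dots+Z_n$.
\item You then prove the purely real-variable fact $\frac{1}{n}\mathbb{E}\log(1+Z_1+\dots+Z_n)\to 0$ by truncating at a level $M$.
\item The only tools are $\log(1+x+y)\le \log(1+x)+\log(1+y)$ and dominated convergence.
\end{itemize}
Every step checks out, including the bound $\frac{a_n}{n}\le \frac{\log(1+nM)}{n}+\mathbb{E}\bigl[\log(1+Z_1)\mathbf{1}_{Z_1>M}\bigr]$.

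Your route has several advantages for this particular statement. It is shorter and self-contained. It needs no regularity of $\eta$: no density and no spread-out hypothesis. It also gives an explicit rate controlled by the tail of $\log(1+\fL)$ under $\eta$. Your appeal to the general existence of the Lyapunov exponent is in fact unnecessary: since $a_n\ge 0$ and $\limsup_n a_n/n\le 0$, the limit exists and equals $0$ automatically.

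What the paper's route buys is consistency with the weighted machinery it uses elsewhere, such as Proposition \ref{prop:Lyap expo-weighted L1} and the property RD arguments. It is also part of a more general treatment of vanishing for other weights in \cite{ASdLSW1}. For the statement at hand, however, your truncation argument is all that is needed.
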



\subsection{Weighted entropy}

We consider a generalization of the concept of differential entropy by taking into account a weight on the underlying group; this is analogous to the concept of a weighted Shannon entropy, as considered in \cite[Definition 4.1]{ASdLSW1}.  

\begin{defn}\label{D:weighted Shannon entropy}
Let $\om$ be a weight on $G$, and let $\eta$ be a probability measure on $G$ with $\eta\in \BG$. 
The \textbf{weighted differential entropy} of the $\eta$-random walk at step $n$ with respect to $\om$ is defined as
\begin{equation}\label{Eq:weighted Shannon entropy-defn}
H_{\omega}(G,\eta^{*n}):=-\int_G \log [\phi_n(s)\omega(s)] \, d\eta^{*n}(s),
\end{equation}
where, as before, $\phi_n$ is the Radon--Nikodym derivative $\phi_n := \frac{d\eta^{*n}}{ds}$. 
\end{defn}


\begin{rem} \label{R:defn weighted entropy}
The weighted differential entropy of a probability measure need not exist. However, we can (formally) rewrite \eqref{Eq:weighted Shannon entropy-defn} as
\begin{equation}\label{Eq:weighted entropy vs entropy and log moment}
H_{\omega}(G,\eta^{*n})=H(G,\eta^{*n})-\displaystyle\int_G\log \omega (s)\,d\eta^{*n}(s),
\end{equation}
so the $n^{\mathrm{th}}$ weighted differential entropy exists if $\eta$ has both finite $n^{\mathrm{th}}$ differential entropy and finite $\log \om$-moment. 
\end{rem}

The following result presents the generalization to the locally compact setting of \cite[Theorem 4.4]{ASdLSW1}. Recall that given a weight $\om$ on $G$ and $p \geq 1$, the weight $\om_p$ is given by $\om_p(s):=\om(s)^{\frac{1}{p}}$.

\begin{thm} \label{T:compare weighted and non-weighted Shannon entropy}
Let $\om$ be a weight on $G$, and let $\eta$ be a probability measure on $G$ such that $\eta\in \B(G)\cap L^{q_0}(G,\omega_{p_0})$, with $1< q_0\leq p_0 <\infty$ and $\frac{1}{p_0}+\frac{1}{q_0}=1$.
\begin{enumerate}[(i)]
    \item The mapping
\[
[p_0,\infty) \rightarrow \R,\ \ p \mapsto -p \log \Vert \eta\Vert_{q,\omega_{p}},
\]
where $p$ and $q$ satisfy $\dfrac{1}{p}+\dfrac{1}{q}=1$, is an increasing function on $[p_0,\infty)$.
\item If, in addition, $\eta$ has both finite differential entropy and finite $\log \om$-moment, then
\begin{equation} \nonumber
H_{\omega}(G,\eta)=\displaystyle\lim_{p\rightarrow \infty} -p \log \Vert \eta \Vert_{q,\omega_{p}},
\end{equation}
where $\dfrac{1}{p}+\dfrac{1}{q}=1$.
\end{enumerate}
\end{thm}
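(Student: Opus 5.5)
Part (i) follows from Proposition \ref{prop:interpolationconsequence}. Take $X=G$ with Haar measure and $f=\phi:=\frac{d\eta}{ds}$. Then $\|f\|_1=1$ because $\eta$ is a probability measure, and by hypothesis $f\in L^1(G)\cap L^{q_0}(G,\om^{1/p_0})$. Since $\om_p=\om^{1/p}$, the quantity $\|\eta\|_{q,\om_p}$ is exactly $\|f\|_{q,\om^{1/p}}$. Monotonicity of $p\mapsto -p\log\|\eta\|_{q,\om_p}$ on $[p_0,\infty)$ is then the conclusion of that proposition, and nothing further is needed.

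For part (ii), write $\phi=\frac{d\eta}{ds}$ and set
\[
F(q):=\|\eta\|_{q,\om_p}^q=\int_G \phi(s)^q\,\om(s)^{q/p}\,ds=\int_G \phi^{q-1}\om^{q-1}\,d\eta ,
\]
using $q/p=q-1$. Then $-p\log\|\eta\|_{q,\om_p}=-\frac{p}{q}\log F(q)=-\frac{\log F(q)}{q-1}$. So the claim becomes
\[
\lim_{q\downarrow 1}\frac{-\log F(q)}{q-1}=-\int_G\log(\phi\om)\,d\eta=H_\om(G,\eta).
\]
Since $F(1)=1$, this is the statement that the one-sided derivative of $\log F$ at $q=1$ equals $\int\log(\phi\om)\,d\eta$. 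Formally, $F'(1)=\int \log(\phi\om)\,d\eta$. The plan is to justify this in the standard way one proves that Rényi entropies converge to Shannon entropy. With $g:=\phi\om>0$ and $t=q-1\in(0,q_0-1]$, we need
\[
\frac{1}{t}\log\int g^t\,d\eta\to\int\log g\,d\eta .
\]

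The lower bound comes from Jensen's inequality (concavity of $\log$): $\frac1t\log\int g^t\,d\eta\ge \int\log g\,d\eta$ for every $t>0$. This uses only that $\log g\in L^1(\eta)$, which holds by the finiteness of the differential entropy and of the $\log\om$-moment (Remark \ref{R:defn weighted entropy}). Jensen gives $\liminf\ge$. Combined with part (i), which shows the quantity is monotone so the limit exists in $[-\infty,\infty]$, this yields existence together with one inequality.

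For the upper bound, use $\log x\le x-1$ to get $\frac1t\log\int g^t\,d\eta\le \int\frac{g^t-1}{t}\,d\eta$. Then pass to the limit $t\downarrow0$ in $\int\frac{g^t-1}{t}\,d\eta$. Pointwise, $\frac{g^t-1}{t}\downarrow\log g$ as $t\downarrow0$, by convexity of $t\mapsto g^t$. Moreover, $\frac{g^t-1}{t}\le\frac{g^{t_0}-1}{t_0}$ for $t\le t_0=q_0-1$, and $\int g^{t_0}\,d\eta=\|\eta\|^{q_0}_{q_0,\om_{p_0}}<\infty$ by hypothesis. The monotone convergence theorem (decreasing version) therefore applies, with the integrable majorant at $t_0$ and the limit $\log g$ having integral $>-\infty$. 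This gives $\limsup\le\int\log g\,d\eta$. Together with the lower bound, the limit equals $-H_\om(G,\eta)$ with the sign convention above, proving (ii).

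The main obstacle is this domination step: making sure the hypothesis $\eta\in L^{q_0}(G,\om_{p_0})$ supplies exactly the integrable majorant at $t_0=q_0-1$. I expect the boundedness of the density ($\eta\in\BG$) to be used only to guarantee that the relevant integrals are finite for every $q$ in $(1,q_0]$.
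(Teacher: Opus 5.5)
Your proposal is correct and follows the intended route. Part (i) is exactly Proposition \ref{prop:interpolationconsequence} applied to $f=\frac{d\eta}{ds}$. For part (ii), whose proof the paper omits by deferring to the discrete case \cite[Theorem 4.4]{ASdLSW1}, you write $-p\log\Vert\eta\Vert_{q,\om_p}=-\frac{1}{t}\log\int_G(\phi\om)^t\,d\eta$ with $t=q-1$. You then squeeze this between Jensen's inequality and $\log x\le x-1$ combined with decreasing monotone convergence, dominated at $t_0=q_0-1$ by the hypothesis $\eta\in L^{q_0}(G,\om_{p_0})$; this fully justifies the derivative computation at $q=1$.

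Two minor remarks. The bounded density is not needed for the intermediate integrals to be finite, since $(\phi\om)^t\le 1+(\phi\om)^{t_0}$. Also, your two-sided bound already gives existence of the limit without appealing to (i).
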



\begin{defn} \label{D:weighted Avez entropy}
Let $\om$ be a weight on $G$, and let $\eta$ be a probability measure on $G$ with $\eta\in \BG$ and both finite $n^{\mathrm{th}}$ differential entropy for all $n\in \N$ and finite $\log \om$-moment. The \textbf{weighted asymptotic entropy} of $\eta$ with respect to $\omega$ is defined as
\begin{equation} \label{Eq:weighted Avez entropy-defn}
h_{\omega }(G,\eta):= \displaystyle\lim_{n\rightarrow \infty} \dfrac{H_{\omega }(G,\eta^{\ast n})}{n}.
\end{equation}
\end{defn}

\begin{rem}
Let $\om$ be a weight on $G$, and let $\eta$ be as in Definition \ref{D:weighted Avez entropy}. By Remark \ref{R:defn weighted entropy}, the mapping
$$\N\to \R, \ \ n\mapsto H_{\omega }(G,\eta^{\ast n})$$
is well defined. Moreover, since
$$H_{\omega}(G,\eta^{\ast n})=H(G,\eta^{\ast n})-\displaystyle\int_G\log \omega (s)\,d\eta^{{\ast n}}(s),$$
Definition \ref{D:Lyap expo} and \eqref{Eq:Furs entropy as limit of diff entopy} imply that the limit in \eqref{Eq:weighted Avez entropy-defn} exists, and we have 
\begin{align} \label{Eq:weighted Avez entropy-difference entropy and Lyap expo}
\begin{split}
 h_{\om}(G,\eta) &=
  \lim_{n\rightarrow \infty} \frac{1}{n}H(G,\eta^{\ast n})- \lim_{n\rightarrow \infty} \frac{1}{n} \displaystyle\int_G\log \omega (s)\,d\eta^{{\ast n}}(s) \\
  &= h_\eta(\Pi_{\eta},\nu_{\infty})-\Lc_\om(G,\eta),
\end{split}
\end{align}
where, as before, $(\Pi_{\eta},\nu_{\infty})$ is the Poisson boundary of $(G,\eta)$ and $h_\eta(\Pi_{\eta},\nu_{\infty})$ is its Furstenberg entropy. In other words, the weighted asymptotic entropy of the $\eta$-random walk is the difference of Furstenberg entropy of the Poisson boundary of $(G,\eta)$ and the Lyapunov exponent with respect to $\om$. In particular:
\begin{align} \label{Eq:equality of weighted and nonweighted Avez entropy}
h_{\omega }(G,\eta)=h_\eta(\Pi_{\eta},\nu_{\infty}) \; \Longleftrightarrow \; \Lc_\om(G,\eta)=0.
\end{align}
\end{rem}

There is another consequence of the relations \eqref{Eq:weighted entropy vs entropy and log moment} and \eqref{Eq:weighted Avez entropy-difference entropy and Lyap expo} that is crucial for our approach. If the weighted differential entropy is a non-positive number, then the differential entropy becomes dominated by the $\eta$-integral of $\log \om$. This simple fact is useful and we take advantage of it. For instance, it occurs whenever the inverse of the weight is $L^1$-integrable. 
The argument is straightforward and similar to \cite[Theorem 4.7]{ASdLSW1}; cf.~\cite[Lemma 5.3 and Corollary 5.4]{Forghani-Tiozzo}. 

\begin{thm} \label{T:Lyap Expo dominates entropy-inverse integrable weights}
Let $\eta$ be a probability measure on $G$ with $\eta\in \BG$, and let $\om$ be a positive measurable function on $G$ such that $\omega^{-1} = 1/\om \in L^1(G)$ and 
\begin{align*}
\int_{G}\log \omega (s) \, d\eta^{*n}(s) < \infty \textrm{ for all } n \in \N.
\end{align*}
Then the $\eta$-random walk has finite $n^{\mathrm{th}}$ differential entropy for all $n\in \N$, and
\begin{align*}
h_\eta(\Pi_{\eta},\nu_{\infty})\leq \liminf_{n\to \infty} \frac{1}{n}\int_{G}\log \omega (s) \, d\eta^{\ast n}(s).
\end{align*}

\end{thm}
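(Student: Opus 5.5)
The proof rests on the Gibbs inequality (nonnegativity of relative entropy) applied to the density $\phi_n=\frac{d\eta^{*n}}{ds}$ and the probability density obtained by normalizing $\om^{-1}$. Set $Z:=\|\om^{-1}\|_{1}<\infty$ and $\psi:=\om^{-1}/Z$, a probability density on $G$ with respect to $m$. For each $n$, $\eta^{*n}$ has density $\phi_n$, and $\phi_n$ is bounded because $\eta\in\BG$: indeed $\eta^{*n}=\eta^{*(n-1)}*\eta$ has density bounded by $\|\rho\|_\infty$.

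The first step is to show that $H(G,\eta^{*n})$ is well defined and finite. On one side, $\log\phi_n\le\log\|\rho\|_\infty$, so
\[
H(G,\eta^{*n})\ge-\log\|\rho\|_\infty>-\infty .
\]
On the other side, I would use the Gibbs inequality in the form
\[
\int_G \phi_n\log\frac{\phi_n}{\psi}\,ds\ge 0 .
\]
This integral is well defined with values in $[0,\infty]$ by Jensen's inequality applied to the convex function $t\log t$ relative to the measure $\psi\,ds$. To justify splitting the logarithm, I would note that the negative part of $\phi_n\log\phi_n$ is controlled by the integrable function $\phi_n\log\psi^{-1}=\phi_n(\log\om+\log Z)$, whose integral is finite by the moment hypothesis. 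The positive part of $\phi_n\log\phi_n$ is bounded by $\phi_n\log^+\|\rho\|_\infty$, which is integrable. Rearranging then gives
\[
H(G,\eta^{*n})=-\int_G\phi_n\log\phi_n\,ds\le\int_G\log\om\,d\eta^{*n}+\log Z<\infty .
\]
One technical point needs care in this step: the hypothesis gives only $\int\log\om\,d\eta^{*n}<\infty$, and $\log\om$ could be negative. The negative part is harmless, because $\phi_n$ is bounded while the negative part of $\phi_n\log\phi_n$ is already dominated. If necessary, I would also make the harmless observation that $\int(\log\om)^-\,d\eta^{*n}$ is finite, since $\{\om<1\}$ with $\om^{-1}$ integrable forces $\int_{\{\om<1\}}\phi_n\log\om^{-1}\,ds\le\|\phi_n\|_\infty\int\om^{-1}\,ds$, using $\log x\le x$.

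The second step is to divide the resulting bound by $n$:
\[
\frac1n H(G,\eta^{*n})\le\frac1n\int_G\log\om\,d\eta^{*n}+\frac{\log Z}{n}.
\]
The left-hand side converges to $h_\eta(\Pi_\eta,\nu_\infty)$ by \eqref{Eq:Furs entropy as limit of diff entopy}, since the limit exists now that finiteness of all $H(G,\eta^{*n})$ has been established. Taking $\liminf$ on the right-hand side then yields the claimed inequality. The main obstacle is the measure-theoretic justification in the first step, namely making sure that every integral appearing in the Gibbs inequality is well defined before rearranging; the rest of the argument is formal.
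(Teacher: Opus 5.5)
Your proof is correct and is essentially the argument the paper intends. The paper omits the proof and refers to the discrete case and to Forghani--Tiozzo; the stated idea is that integrability of $\om^{-1}$ forces the weighted differential entropy to be bounded above, $H_\om(G,\eta^{*n})\le\log\|\om^{-1}\|_1$ (which is exactly your Gibbs inequality), after which one uses $H(G,\eta^{*n})=H_\om(G,\eta^{*n})+\int_G\log\om\,d\eta^{*n}$ and divides by $n$. One small slip: $(\phi_n\log\phi_n)^-$ is not pointwise dominated by $\phi_n\log\psi^{-1}$, but writing $\phi_n\log\phi_n=\psi\,F(\phi_n/\psi)+\phi_n\log\psi$ with $F(t)=t\log t\ge -1/e$ gives $(\phi_n\log\phi_n)^-\le \psi/e+\phi_n(\log\psi^{-1})^+$, which is integrable and is what actually justifies splitting the logarithm.
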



\subsection{Convolution entropy and Furstenberg entropy} \label{subsec:convolutionentropy}


For $1 < p < \infty$, let $\lambda_p \colon L^1(G)\to B(L^p(G))$ be the left-regular representation, i.e.~the representation defined by
$$ \lambda_p(f)g=f*g.$$
The representation $\lambda_p$ is a contractive representation and the norm closure of $\lambda_p(L^1(G))$ inside $B(L^p(G))$ is called the algebra $\pf_p(G)$ of \textbf{$p$-pseudofunctions}, which goes back to \cite{Herz}. For simplicity, we write $\lambda=\lambda_2$, which is the left-regular representation of $L^1(G)$ on $L^2(G)$. As described in \cite[Section 2.3]{ASdLSW1}, we have
\begin{align}\label{Eq:complex interpolation-convlution operators-norm relations}
  \|f\|_{\pf_u(G)}\leq \|f\|_1^{1-\theta}\|f\|_{\pf_p(G)}^\theta, \,\, \text{with} \,\, \theta=\frac{u}{p},
\end{align}
for $1\leq u\leq p$ and $f\in L^1(G)$.

The following result states that a certain asymptotic behaviour of a probability measure $\eta$ in $\pf_p(G)$ is dominated by the Furstenberg entropy of the $\eta$-random walk on its Poisson boundary. This is a generalization of a similar result, shown in \cite[Theorem 5.1]{ASdLSW1}, for discrete groups. We present the proof here, as we have to make a modest adjustment in the locally compact setting.    

\begin{thm}\label{T:Avez entropy-p conv operators-I}
Let $\eta$ be a probability measure on $G$ with $\eta\in \BG$ such that $\eta$ has finite $n^{th}$ differential entropy, for all $n\in \N$. Then the mapping
\begin{align}\label{Eq:asymptote Lp conv operators}
[2,\infty )\rightarrow [0,\infty ), \;\; p \mapsto -p\log r_{\pf_q(G)}(\eta),
\end{align}
where $\frac{1}{p}+\frac{1}{q}=1$, is increasing and bounded on $[2,\infty)$, and
\begin{align*} 
 \displaystyle\lim_{p\rightarrow \infty}-p\log r_{\pf_q(G)}(\eta) \leq h_\eta(\Pi_{\eta},\nu_{\infty}).
\end{align*}
\end{thm}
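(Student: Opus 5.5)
Throughout, $p\ge 2$ and $\frac1p+\frac1q=1$, so $q\in(1,2]$. The plan is to follow the discrete argument of \cite[Theorem 5.1]{ASdLSW1}. There are three steps: establish monotonicity by interpolation between $\pf_1$ and $\pf_q$, bound $\|\eta^{*n}\|_{\pf_q(G)}$ by the $L^q$-norm of the density $\phi_n$ using boundedness of $\phi_n$, and then bring in differential entropy through Proposition \ref{prop:interpolationconsequence} with trivial weight.

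\emph{Monotonicity.} Since $\pf_1(G)=L^1(G)$ isometrically and $\eta^{*n}$ is a probability measure, $\|\eta^{*n}\|_1=1$. Take $2\le p\le p'$ with conjugates $q'\le q$, and apply \eqref{Eq:complex interpolation-convlution operators-norm relations} to $\eta^{*n}$ with $u=q'$ and exponent $q$. Here $\theta=q'/q$, and the complex interpolation parameter behind that inequality should match $\frac{1}{q'}=\frac{1-\theta}{1}+\frac{\theta}{q}$; I will double-check this bookkeeping. This gives
\[
\|\eta^{*n}\|_{\pf_{q'}}\le \|\eta^{*n}\|_{\pf_q}^{\theta'}, \qquad \theta'=\frac{1-1/q'}{1-1/q}=\frac{p}{p'}.
\]
Taking $n$-th roots and letting $n\to\infty$ yields $r_{\pf_{q'}}(\eta)\le r_{\pf_q}(\eta)^{p/p'}$, that is, $-p'\log r_{\pf_{q'}}(\eta)\ge -p\log r_{\pf_q}(\eta)$. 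Nonnegativity follows from $r_{\pf_q}(\eta)\le\|\eta\|_1=1$.

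\emph{Upper bound.} This also gives boundedness. For a density $f\in L^1\cap L^q$ I use Young's inequality $\|f*g\|_q\le \|f\|_q\|g\|_1$ in a transposed form. Working with $\lambda_q(f)$ acting on $L^q$ and using the modular function requires care in the locally compact setting; this is the ``modest adjustment''. If left convolution by an $L^q$ function is not bounded on $L^q$ uniformly, I will instead use a duality/Hausdorff–Young-type bound $\|\lambda_q(f)\|\le \|f\|_1^{1-2/p}\|\lambda_2(f)\|^{2/p}$ together with $\|\lambda_2(\phi_n)\|$ controlled by $L^1$-$L^2$ interpolation. The clean route is Riesz–Thorin between $\|\lambda_1(f)\|\le\|f\|_1$ and $\|\lambda_\infty(f)\|\le\|f\|_1$, which only gives $\le\|f\|_1$. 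So I expect the needed estimate to be of the form
\[
\|\lambda_q(\phi_n)\|\le \|\phi_n\|_{L^q}^{\,c}
\]
obtained from $L^1\to L^\infty$ bounds of convolution with bounded densities and interpolation. \textbf{This is the main obstacle.} Granting it, $-p\log r_{\pf_q}(\eta)\ge$ a constant multiple of $-p\log\|\phi_n\|_q$, with the normalisation of the $p$-dependence to be fixed so that the constant does not spoil the limit.

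\emph{Entropy.} By Proposition \ref{prop:interpolationconsequence} and Theorem \ref{T:compare weighted and non-weighted Shannon entropy} with $\om\equiv1$ (valid since $\phi_n$ is bounded, hence in every $L^q$, and has finite differential entropy), $-p\log\|\phi_n\|_q$ increases in $p$ to $H(G,\eta^{*n})$. Hence
\[
-p\log r_{\pf_q}(\eta)=\lim_n -\frac{p}{n}\log\|\lambda_q(\eta^{*n})\|\le \liminf_n \frac{1}{n}H(G,\eta^{*n})=h_\eta(\Pi_\eta,\nu_\infty)
\]
by \eqref{Eq:Furs entropy as limit of diff entopy}. The left side is increasing and bounded by the right side uniformly in $p$, so the limit as $p\to\infty$ exists and satisfies the stated inequality.
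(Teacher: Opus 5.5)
Your monotonicity and nonnegativity steps are fine, and your Riesz--Thorin parameter $\theta'=p/p'$ is the correct one. The main inequality, however, is not established, because your ``upper bound'' step runs in the wrong direction.

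To prove $-p\log r_{\pf_q(G)}(\eta)\le h_\eta(\Pi_\eta,\nu_\infty)$ you need a \emph{lower} bound on $\|\eta^{*n}\|_{\pf_q(G)}$. An estimate $\|\lambda_q(\phi_n)\|\le\|\phi_n\|_q^{c}$ gives $-p\log\|\lambda_q(\phi_n)\|\ge -cp\log\|\phi_n\|_q$. That bounds the convolution entropy from \emph{below} by differential entropy, which is the converse inequality of Theorem~\ref{T:Avez entropy-p conv operators-RD}. That converse is exactly where property RD or Kunze--Stein is needed.

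Without such hypotheses the estimate is false for any $c>0$. On $G=\R$ with $\eta$ uniform on $[-1,1]$, amenability gives $\|\lambda_q(\phi_n)\|=\|\phi_n\|_1=1$ for all $n$, while $\|\phi_n\|_q\to0$. Your final display then silently replaces the $\ge$ you would have obtained by $\le$. Boundedness, which you derive from the same step, is therefore also unproved.

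The missing idea is elementary: use the density $\phi_1$ of $\eta$ as a test vector. Since $\eta\in\BG$, we have $\phi_1\in L^q(G)$ and $\phi_n=\lambda_q(\eta^{*(n-1)})\phi_1$, so
\[
\|\phi_n\|_q\le\|\eta^{*(n-1)}\|_{\pf_q(G)}\,\|\phi_1\|_q.
\]
This replaces $\delta_e$ from the discrete proof, which is not in $L^q(G)$. That is the ``modest adjustment''; it has nothing to do with the modular function.

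Next use the monotonicity you already invoke (Proposition~\ref{prop:interpolationconsequence} and Theorem~\ref{T:compare weighted and non-weighted Shannon entropy} with $\om\equiv1$). For $p\ge2$ it gives $-p\log\|\phi_n\|_q\le H(G,\eta^{*n})$ and $p\log\|\phi_1\|_q\le 2\log\|\phi_1\|_2$. Hence
\[
-\frac{p}{n-1}\log\|\eta^{*(n-1)}\|_{\pf_q(G)}\le\frac{H(G,\eta^{*n})+2\log\|\phi_1\|_2}{n-1}.
\]
Letting $n\to\infty$ gives $-p\log r_{\pf_q(G)}(\eta)\le h_\eta(\Pi_\eta,\nu_\infty)$ for every $p\ge2$. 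This yields both boundedness and the limit inequality.

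The paper uses the same test-vector inequality, in a slightly longer form. It combines it with $\|\eta^{*(n-1)}\|_{\pf_q(G)}\le\|\eta\|_{\pf_q(G)}^{n-1}$ and lets $q\to1^+$, which absorbs the factor $\|\eta\|_q$ into the finite quantity $H(G,\eta)$. It then applies the resulting bound to $\eta^{*n}$.
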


We call the limit $\lim_{p\rightarrow \infty}-p\log r_{\pf_q(G)}(\eta)$ the \textbf{convolution entropy} of the $\eta$-random walk, denoted by $c(G,\eta)$ 
(see also \cite[Definition 1.3]{ASdLSW1}).

\begin{proof}
We first note that since $\eta\in L^1(G)\cap L^\infty(G)$, we have $\eta\in L^q(G)$ for $q\in [1,\infty]$. For every $1<q<p<\infty $ with $\dfrac{1}{p}+\dfrac{1}{q}=1$ and $n\in \mathbb{N}$ with $n\geq 2$, we have
$$\Vert \eta^{\ast n}\Vert_{q}^{-p} = \Vert \eta^{\ast (n-1)} \ast \eta\Vert_{q}^{-p} \geq \Vert \eta \Vert_q^{-p} \Vert \eta^{\ast (n-1)}\Vert_{\pf_q(G)}^{-p}  \geq \Vert \eta \Vert_q^{-p} \Vert \eta \Vert_{\pf_q(G)}^{-(n-1)p} .$$
Hence,
$$-p\log \Vert \eta^{\ast n}\Vert_{q} +p\log \Vert \eta\Vert_{q}\geq -(n-1)p \log \Vert \eta \Vert_{\pf_q(G)}\geq 0. $$
By letting $q\rightarrow 1^{+}$ (and hence, $p\rightarrow \infty$), we obtain
$$H(G,\eta^{\ast n})-H(G,\eta) \geq n \ A(G,\eta),$$
where
$$A(G,\eta) := \displaystyle\lim_{p\rightarrow \infty}-p\log \Vert \eta \Vert_{\pf_q(G)}.$$
We note that $A(G,\eta)$ exists, since by \eqref{Eq:complex interpolation-convlution operators-norm relations} the mapping
$$[2,\infty ) \mapsto [0,\infty ), \;\; p\mapsto -p \log \Vert \eta \Vert_{\pf_q(G)}$$
is increasing and bounded by $h_\eta(\Pi_{\eta},\nu_{\infty})$. Therefore,
$$h_\eta(\Pi_{\eta},\nu_{\infty})=\displaystyle\lim_{n\rightarrow \infty }\dfrac{H(G,\eta^{\ast n})}{n} \geq A(G,\eta).$$
By replacing $\eta$ with $\eta^{\ast n}$, we obtain
\begin{align*}
h_\eta(\Pi_{\eta},\nu_{\infty}) =\dfrac{h_\eta(\Pi_{\eta^{\ast n}},\nu_{\infty})}{n} \geq \dfrac{A(G,\eta^{\ast n})}{n} \geq \dfrac{-p\log \Vert \eta^{\ast n}\Vert_{\pf_q(G)}}{n}.
\end{align*}
Combining the preceding relation with \eqref{Eq:complex interpolation-convlution operators-norm relations}, it follows that the mapping \eqref{Eq:asymptote Lp conv operators} is increasing, with
\begin{align*}
& h_\eta(\Pi_{\eta},\nu_{\infty}) \geq -p\log r_{\pf_q(G)}(\eta ) \;\; \textrm{for} \;\; p\geq 2.
\end{align*}
Therefore,
\begin{align} \label{eq:avezentropyequalconvolutionentropy}
h_\eta(\Pi_{\eta},\nu_{\infty}) \geq \displaystyle\lim_{p\rightarrow \infty}-p\log r_{\pf_q(G)}(\eta ).
\end{align}
\end{proof}

The following theorem proves that for random walks on groups with property RD or Kuntz-Stein property, we have equality in \eqref{eq:avezentropyequalconvolutionentropy}.

\begin{thm}\label{T:Avez entropy-p conv operators-RD}
Let $G$ be a second countable locally compact group, and let $\eta$ be a probability measure on $G$ with $\eta\in \BG$ such that $\eta$ has finite $n^{th}$-differential entropy, for all $n\in \N$.  Suppose that one of the following holds:
\begin{enumerate}[(i)]
    \item $(G,\fL)$ has property RD and $\eta$ has a finite $\alpha$-moment with respect to $1+\fL$ for some $\alpha>0$.
    \item $G$ has Kunze-Stein Property.
\end{enumerate}
Then the Furstenberg entropy of the $\eta$-random walk on its Poisson boundary and the convolution entropy of the $\eta$-random walk coincide:
\begin{align} \label{Eq:Avez-entropy-p conv operators-RD}
   h_\eta(\Pi_{\eta},\nu_{\infty})=c(G,\eta).
\end{align}
\end{thm}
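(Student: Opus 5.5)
By Theorem \ref{T:Avez entropy-p conv operators-I} we already have $c(G,\eta)\le h_\eta(\Pi_\eta,\nu_\infty)$. So it remains to prove the reverse inequality
\[
\lim_{p\to\infty}-p\log r_{\pf_q(G)}(\eta)\ \ge\ h_\eta(\Pi_\eta,\nu_\infty).
\]
The plan is to bound the pseudofunction norms of $\eta^{*n}$ from above by weighted $L^q$-norms, and then to compare those with weighted entropies via Theorem \ref{T:compare weighted and non-weighted Shannon entropy}. The Lyapunov exponent, which vanishes by Theorem \ref{T:vanishing Lyap expo-log of length function}, absorbs the cost of the weight.

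\textbf{Case (i): property RD.} Let $\om=(1+\fL)^{2d}$, so that $\om_2=(1+\fL)^d$ is the RD weight. RD gives $\|\eta^{*n}\|_{\pf_2(G)}\le C\|\eta^{*n}\|_{2,\om_2}$.

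The first step is to pass from $\pf_2$ to $\pf_q$ for $q$ near $1$. Use \eqref{Eq:complex interpolation-convlution operators-norm relations} with $u=q$ and the exponent $2$ in place of $p$; for $q\in(1,2)$ this uses $\pf_{q}$ and duality $\pf_q=\pf_p$. This gives $\|f\|_{\pf_q}\le \|f\|_1^{1-q/2}\|f\|_{\pf_2}^{q/2}$, which yields a factor $p$-independent in the limit.

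It is cleaner to interpolate directly, Riesz--Thorin style, between $L^1(G,\om_p)$-type bounds and RD. The aim is an estimate of the form
\[
\|\eta^{*n}\|_{\pf_q(G)}\le C^{2/p}\,\|\eta^{*n}\|_{q,\om_p}.
\]
This is the weighted $L^q$ analogue of RD obtained by complex interpolation between the trivial bound $\|f\|_{\pf_1}\le\|f\|_1$ and the RD bound at $q=2$, exactly as in the discrete case \cite{ASdLSW1}.

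Granting this estimate, I take $-p\log$, divide by $n$, and then let $p\to\infty$ using Theorem \ref{T:compare weighted and non-weighted Shannon entropy}(ii) applied to $\eta^{*n}$. This gives
\[
c(G,\eta)\ \ge\ \tfrac1n\bigl(H_\om(G,\eta^{*n})-2\log C\bigr).
\]
The moment hypothesis ensures that the finite $\log\om$-moment holds for every $\eta^{*n}$: by subadditivity, a finite $\alpha$-moment of $1+\fL$ gives a finite $\log(1+\fL)$-moment for all convolution powers. Then letting $n\to\infty$ and applying \eqref{Eq:weighted Avez entropy-difference entropy and Lyap expo} gives
\[
c(G,\eta)\ \ge\ h_\eta(\Pi_\eta,\nu_\infty)-\Lc_\om(G,\eta).
\]
Finally, $\Lc_\om=2d\,\Lc_{1+\fL}=0$ by Theorem \ref{T:vanishing Lyap expo-log of length function}.

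\textbf{Case (ii): Kunze--Stein.} Here no weight is needed. The Kunze--Stein property gives $\|f\|_{\pf_2}\le C_r\|f\|_r$ for each $r\in[1,2)$. Interpolating with $\|f\|_{\pf_1}\le\|f\|_1$ yields, for $q$ near $1$, a bound $\|f\|_{\pf_q}\le C_r^{\theta}\|f\|_{s}$ with $s$ close to $1$. The aim is to arrange $-p\log$ of the right-hand side to converge to the differential entropy of $\eta^{*n}$ as $p\to\infty$ (the unweighted case $\om\equiv1$ of Theorem \ref{T:compare weighted and non-weighted Shannon entropy}). The constants $C_r^\theta$ must contribute $O(1)$ after multiplication by $p$, so that they disappear after dividing by $n$.

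The main obstacle is this interpolation step in both cases: producing $\|\cdot\|_{\pf_q}\lesssim \|\cdot\|_{q,\om_p}$ (respectively $\lesssim\|\cdot\|_{q'}$) with constants whose $p$-th powers stay bounded as $q\to1^+$. I would use Stein's analytic family interpolation with the weights $\om^{z/2}$, mirroring \cite[Section 5]{ASdLSW1}. In the Kunze--Stein case I would pick $r$ depending on $q$, so that the interpolation parameter $\theta\approx 2/p$ makes $C_r^{p\theta}$ uniformly bounded, which requires some control of $C_r$ as $r\to1$, or just fixing $r$.
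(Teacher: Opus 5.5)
\textbf{Case (i).} This is essentially the paper's argument. The paper also uses the vanishing of $\Lc_\om$ (Theorem \ref{T:vanishing Lyap expo-log of length function}) to replace $h_\eta(\Pi_\eta,\nu_\infty)$ by $h_\om(G,\eta)$. It then defers to the discrete argument of \cite[Theorem 5.2]{ASdLSW1}, which is exactly your weighted interpolation bound $\|f\|_{\pf_q(G)}\le C^{2/p}\|f\|_{q,\om_p}$, followed by Theorem \ref{T:compare weighted and non-weighted Shannon entropy}(ii) applied to $\eta^{*n}$. Two small corrections. First, your preliminary aside has the wrong exponent: interpolating between $\pf_1$ and $\pf_2$ gives $\theta=2/p$, not $q/2$. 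You do not use it, so this is harmless. Second, you should verify the hypothesis $\eta^{*n}\in L^{q_0}(G,\om_{p_0})$ of that theorem. It follows from the bounded density and the $\alpha$-moment once $p_0$ is large, as the paper checks.

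\textbf{Case (ii): the genuine gap.} Interpolate $\|f\|_{\pf_1}\le\|f\|_1$ with the Kunze--Stein bound $\|f\|_{\pf_2}\le C_r\|f\|_r$ at $\theta=2/p$. This gives $\|f\|_{\pf_q(G)}\le C_r^{2/p}\|f\|_s$ with $\frac1s=1-\frac2p+\frac2{pr}$. Hence the conjugate exponent $s'$ of $s$ satisfies $\frac1{s'}=\frac2p\bigl(1-\frac1r\bigr)$, that is, $p=2\bigl(1-\frac1r\bigr)s'$. Therefore
\[
-p\log\|\eta^{*n}\|_s=2\Bigl(1-\frac1r\Bigr)\bigl(-s'\log\|\eta^{*n}\|_s\bigr)\longrightarrow 2\Bigl(1-\frac1r\Bigr)H(G,\eta^{*n})\qquad(p\to\infty),
\]
and the limit is not $H(G,\eta^{*n})$ as you aim for. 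The factor $2(1-\frac1r)$ is strictly less than $1$ for every admissible $r<2$. It equals $1$ only at $r=2$, where Kunze--Stein fails and $C_r$ is not controlled. So no choice $r=r(q)$ gives both limit $H(G,\eta^{*n})$ and bounded $C_r^{p\theta}=C_r^2$. Your suggestion to control $C_r$ as $r\to1$ goes the wrong way, since that drives the factor to $0$. With $r$ fixed, your argument only proves $c(G,\eta)\ge 2(1-\frac1r)\,h_\eta(\Pi_\eta,\nu_\infty)$, which is strictly weaker than the claimed equality.

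\textbf{The missing step.} The paper concludes by ordering the limits. Fix $r=q_0\in(1,2)$ and let $p\to\infty$ to get
\[
n\,c(G,\eta)\ \ge\ 2\Bigl(1-\frac1{q_0}\Bigr)H(G,\eta^{*n})-2\log C_{q_0}.
\]
Divide by $n$ and let $n\to\infty$; this kills the constant and gives $c(G,\eta)\ge 2(1-\frac1{q_0})h_\eta(\Pi_\eta,\nu_\infty)$. Only then let $q_0\to2^-$, which is legitimate because the Kunze--Stein property holds for every exponent below $2$. The possible blow-up of $C_{q_0}$ near $2$ is harmless once $n\to\infty$ has been taken first, and the regime $r\to1$ plays no role.
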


\begin{proof}
By Theorem \ref{T:Avez entropy-p conv operators-I}, we only need to prove that $ h_\eta(\Pi_{\eta},\nu_{\infty}) \leq c(G,\eta)$.
Suppose that $G$ has property RD with respect to a length function $\fL$. Then there exists $d \in \N$ such that for the weight $\om$ on $G$ given by
$$\om(s)=(1+\fL(s))^d, \;\; s\in G,$$
we have the inclusion $L^2(G,\om_2)\subseteq C^*_r(G)$.
By hypothesis, $\eta\in L^1(G,\om^{\frac{\alpha}{d}})$, so that by Proposition \ref{prop:Lyap expo-weighted L1}, $\eta$ has finite $\log \om$-moment. On the other hand, $\log \om=d\log(1+\fL)$, and hence, by Theorem \ref{T:vanishing Lyap expo-log of length function}, the Lyapunov exponent of $\eta$ with respect to $\om$ vanishes:
$$\Lc_\om(G,\eta)=0.$$
In other words, by \eqref{Eq:equality of weighted and nonweighted Avez entropy}, we have
$$ h_\eta(\Pi_{\eta},\nu_{\infty})=h_\omega(G,\eta).$$
Therefore, it suffices to prove that
\begin{align*}
 h_\omega(G,\eta)\leq \displaystyle\lim_{p\rightarrow \infty}-p\log r_{\pf_q(G)}(\eta).
\end{align*}
Now pick $p_0\in [2,\infty]$ such that $p_0> \displaystyle\frac{d}{\alpha}$. By our hypothesis,
$\eta \in L^{1}(G,\omega_{p_0})\cap L^\infty(G)$.
Equivalently, setting $\nu$ to be the Borel measure on $G$ given by $d\nu=\omega_{p_0}d\lambda$ where $\lambda$ denotes the Haar measure of $G$, we have that $\eta\in L^1(G,\nu)\cap L^\infty(G,\nu)$. Consequently, for $q > 1$,
\begin{align}\label{Eq:111}
    \eta \in L^q(G,\nu)=L^{q}(G,\om_{qp_0}).
\end{align}
\textcolor{blue}{}
Fix $1< q\leq p<\infty$ with $p\geq 2p_0$ and $\dfrac{1}{p}+\dfrac{1}{q}=1$.
Since $p\geq 2p_0$, we must have $\om_p\leq \om_{qp_0}$. Hence, by \eqref{Eq:111}, 
\begin{align*}
    \eta \in \bigcap_{p\geq 2p_0} L^{q}(G,\om_{p}).
\end{align*}
The rest of the proof follows similar to the proof of \cite[Theorem 5.2]{ASdLSW1}.

Now suppose that $G$ has Kunze-Stein property. Fix $q_0\in (1,2)$, and let $1< q<q_0<p<\infty$ with $\dfrac{1}{p}+\dfrac{1}{q}=1$. Using interpolation methods, it is shown in the proof of \cite[Proposition 5.1]{SW-exotic} that for $\theta,q'\in (0,1)$ satisfying 
\begin{align}\label{Eq:112}
  \theta=\frac{2}{p} \ \ \text{and} \ \ \frac{1}{q'}=1-\theta+\frac{\theta}{q_0},
\end{align}
we have
$$L^{q'}(G) \subseteq \pf_q(G).$$
Moreover, there exists a constant $C>0$ (in fact, $C$ is the norm of the inclusion map $L^{q_0}(G) \hookrightarrow C^*_r(G)$) such that for every $n\in \mathbb{N}$,
$$r_{\pf_q(G)}(\eta^{*n})\leq \Vert \eta^{\ast n}\Vert_{\pf_q(G)}\leq C^{\frac{2}{p}}\Vert \eta^{\ast n}\Vert_{L^{q'}(G)},$$
so that
$$-2\log C -p\log \Vert \eta^{\ast n}\Vert_{L^{q'}(G)} \leq -p n \log r_{\pf_q(G)}(\eta). $$
On the other hand, it follows from \eqref{Eq:112} that 
$$\frac{p}{p'}=2\left(1-\frac{1}{q_0}\right), \ \text{where}\ \ p'=\frac{q}{q-1}.$$
By combining the preceding relations, we obtain
$$-2\log C -2\left(1-\frac{1}{q_0}\right)p'\log \Vert \eta^{\ast n}\Vert_{L^{q'}(G)} \leq -p n \log r_{\pf_q(G)}(\eta). $$
Now, by letting $p\to \infty$, we have (by Theorem \ref{T:compare weighted and non-weighted Shannon entropy})
$$-2\log C + 2\left(1-\frac{1}{q_0}\right)H(\eta^{\ast n},G) \leq n\lim_{p\rightarrow \infty} -p \log r_{\pf_q(G)}(\eta). $$
Finally, by dividing both sides by $n$ and letting $n\to \infty$ and $q_0\to 2^-$, we obtain
\begin{align*}
 h_\eta(\Pi_{\eta},\nu_{\infty})=\displaystyle\lim_{n\rightarrow \infty}\dfrac{H(G,\eta^{\ast n})}{n}\leq \displaystyle\lim_{p\rightarrow \infty}-p\log r_{\pf_q(G)}(\eta).
\end{align*}
This completes the proof.
\end{proof}

\begin{rem}
\
\begin{enumerate}[(i)]
\item To the knowledge of the authors, equality \eqref{Eq:Avez-entropy-p conv operators-RD} is new, even for semisimple Lie groups. It generalizes known results on the entropy of random walks on groups with polynomial growth to non-amenable groups with property RD or Kunze-Stein. It also provides a new interesting way to compute the entropy of random walks on groups with property RD or Kunze-Stein. 

\item As a consequence of Theorem \ref{T:Avez entropy-p conv operators-RD}, we observe that for suitable probability measures on groups with property RD or Kunze-Stein, we can ``commute the double limits'' in computing the Furstenberg entropy of the $\eta$-random walk on its Poisson boundary. Indeed, for a probability measure $\eta$ satisfying \eqref{Eq:Avez-entropy-p conv operators-RD}, we have
\begin{align*}
 h_\eta(\Pi_{\eta},\nu_{\infty}) &= \lim_{n\rightarrow \infty} \lim_{q\rightarrow 1^{+}} -\frac{p}{n}\ln \Vert \eta^{*n} \Vert_{q} \\
 &= \lim_{q\rightarrow 1^+} \lim_{n\rightarrow \infty} -\frac{p}{n}\ln \Vert \eta^{*n} \Vert_{q}.
\end{align*}
\end{enumerate}
\end{rem}

\section{Poisson boundaries and amenable stationary actions} \label{sec:poissonboundariesstationarydynamicalsystems}

\subsection{$L^p$-Koopman representation}
Let $(X,\xi)$ be a standard probability space equipped with a non-singular (also called quasi-invariant) measurable $G$-action. 
For $1 \leq p < \infty$, this action induces the {\textbf{$L^p$-Koopman representation}} $$\pi_{p,X}: G\to B(L^p(X,\xi))$$
defined by
\begin{align*} 
[\pi_{p,X}(s)f](x)=\left[\frac{d(s\xi)}{d\xi}(x)\right]^{1/p} f(s^{-1}x), \qquad s \in G, \; f\in L^p(X,\xi) \textrm{ and }\xi\mathrm{-a.e.}~x,
\end{align*}
where $\frac{d(s\xi)}{d\xi}$ denotes the Radon-Nikodym derivative. In the unitary case, we write $\pi_X:=\pi_{2,X}$.


Following \cite{Nevo1}, we consider the function $\Xi_{X} \colon G \to [0,\infty)$, coming from the action $G \curvearrowright (X,\xi)$, defined as the diagonal matrix coefficient of $\pi_{X}$ with respect to the unit vector $1\in L^\infty(X,\xi) \subseteq L^2(X,\xi)$, i.e.
\begin{align} \label{Eq:Harich-Chandra function}
  \Xi_{X}(s):=\la \pi_{X}(s) 1, 1 \ra=\int_{X} \rho(s,x)^{1/2}  d\xi(x), \;\; s\in G,
\end{align}
where $\rho(s,x)$ is as in \eqref{Eq:Radon-Nikodym cocycle-defn}.
This function is a generalization of the \textbf{Harish-Chandra $\Xi$-function} to the dynamical setting. It is straightforward to see that $\Xi_X \geq 0$ and $\Xi_{X}(s^{-1})=\Xi_{X}(s)$ for all $s\in G$.



\subsection{Furstenberg entropy and decay of the Harish-Chandra $\Xi$-function}

Let $(X,\xi)$ and $(Y,\eta)$ be measurable $G$-spaces. A measurable map $\phi \colon (X,\xi)\to (Y,\eta)$ is called a \textbf{factor} (and $(X,\xi)$ is called an \textbf{extension} of $(Y,\eta)$) if $\phi$ is $G$-equivariant and $\eta$ is the push-forward measure of $\xi$ under $\phi$. 
If $(X,\xi)$ and $(Y,\eta)$ are measurable $G$-spaces and $\phi \colon (X,\xi)\to (Y,\eta)$ is a factor, then the map $\phi$ is \textbf{measure-preserving} if for every $s \in G$, we have $s\xi_y=\xi_{sy}$ for $\eta$-a.e.~$y$. Here, the probability measure $\xi_y$ is defined through the disintegration $\xi=\displaystyle\int_{y\in Y} \xi_yd\eta(y)$.

It is well known that the factor $\phi \colon (X,\xi)\to (Y,\eta)$ is measure preserving if and only if $h_\eta(X,\xi)=h_\mu(Y,\eta)$ (see \cite[Theorem 1.3.(2)]{Furs-Glas 1}).

We are now ready to present one of the main results of this article, which shows that the sufficiently fast decay of $\Xi_X$ (with respect to a length function on $G$) ensures that the Furstenberg entropy of $(X,\xi)$ must be equal to the Furstenberg entropy of its Poisson boundary. In fact, we present our result in a more general setting that will be useful for us later. This result is also a generalization of \cite[Theorem 6.2]{ASdLSW1}, which stated a weaker version for countable discrete groups with rapid decay.  


\begin{thm} \label{T:Square integrable-inverse Harich Chandra function-the general case}
Let $G$ be a second countable locally compact group with a subadditive function $\fL$, and let $\eta$ be a probability measure on $G$ with $\eta \in \BG$ such that $\eta$ has both finite $n^{th}$-differential entropy for all $n\in \N$ and finite $\log(1+\fL)$ moment. Let $(X,\xi)$ be a standard probability space equipped with a non-singular measurable $G$-action such that there is $d\geq 0$ and $p_0\geq 1$ with
\begin{align}\label{Eq:square invertible-Harich Chandra function-Discretization}
 \frac{\Xi_{X}}{(1+\fL)^{d}}\in L^{p_0+}(G).
\end{align}
Then 
\begin{align*}
 h_\eta(\Pi_{\eta},\nu_{\infty})\leq p_0\liminf_{n\rightarrow \infty}\dfrac{-1}{n}\int_{G}\log \Xi_X(s)\, d\eta^{*n}(s),
\end{align*}
where $ h_\eta(\Pi_{\eta},\nu_{\infty})$ is the Furstenberg entropy of the Poisson boundary.  If, in addition, $\xi$ is $\eta$-stationary, i.e. $(X,\xi)$ is a $(G,\eta)$-stationary space and $p_0=2$, then
\begin{align}\label{Eq:Lyap expo-weight coming from Harich Chandra function-inverse square integrable-Discretization}
 h_\eta(\Pi_{\eta},\nu_{\infty})=h_\eta(X,\xi)=-2\lim_{n\rightarrow \infty}\dfrac{1}{n}\int_{G}\log \Xi_X(s)\, d\eta^{*n}(s).
\end{align}
In particular, $(X,\xi)$ is the measure-preserving extension of the Poisson boundary 
$h_\eta(\Pi_{\eta},\nu_{\infty})$ as a $G$-space and the Harish-Chandra function $\Xi_{\Pi_\eta}$
satisfies \eqref{Eq:square invertible-Harich Chandra function-Discretization} with $p_0=2$. 
\end{thm}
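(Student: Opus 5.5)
The plan is to apply Theorem \ref{T:Lyap Expo dominates entropy-inverse integrable weights} with the weight
\[
\om := \left(\frac{(1+\fL)^{d}}{\Xi_X}\right)^{p}, \qquad p>p_0 \text{ arbitrary}.
\]
The hypothesis \eqref{Eq:square invertible-Harich Chandra function-Discretization} gives $\Xi_X/(1+\fL)^d\in L^{p}(G)$ for every $p>p_0$, so $\om^{-1}\in L^1(G)$.

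To check the moment condition of that theorem, we need $\int\log\om\,d\eta^{*n}<\infty$. This means bounding $d p\int\log(1+\fL)\,d\eta^{*n}$ and $-p\int\log\Xi_X\,d\eta^{*n}$ from above.
\begin{itemize}
\item The first term is finite by subadditivity and the finite $\log(1+\fL)$-moment, since $\log(1+\fL(st))\le\log(1+\fL(s))+\log(1+\fL(t))$.
\item For the second term, if $\int -\log\Xi_X\,d\eta^{*n}=\infty$ for some $n$, then the $\liminf$ on the right-hand side is $+\infty$ and the inequality is trivial. So we may assume it is finite.
\end{itemize}
(One has to handle $\Xi_X$ vanishing or being undefined carefully, but $\Xi_X>0$ by non-singularity.)

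Under this assumption the theorem yields
\[
h_\eta(\Pi_\eta,\nu_\infty)\le \liminf_n \frac1n\Big(dp\int\log(1+\fL)\,d\eta^{*n}-p\int\log\Xi_X\,d\eta^{*n}\Big).
\]
By Theorem \ref{T:vanishing Lyap expo-log of length function}, the first term divided by $n$ tends to $0$. This gives $h\le p\liminf_n \frac{-1}{n}\int\log\Xi_X\,d\eta^{*n}$, and letting $p\downarrow p_0$ proves the first inequality.

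For the stationary case with $p_0=2$, the key step is the upper bound
\[
-2\int\log\Xi_X\,d\eta^{*n}\le n\,h_\eta(X,\xi).
\]
By Jensen's inequality, $\log\Xi_X(s)=\log\int\rho(s,x)^{1/2}\,d\xi\ge \frac12\int\log\rho(s,x)\,d\xi(x)$. Integrating against $\eta^{*n}$ and using the cocycle relation \eqref{Eq:Radon-Nikodym cocycle-cocycle relations} together with stationarity gives
\[
-\int\!\!\int\log\rho\,d\xi\,d\eta^{*n}=h_{\eta^{*n}}(X,\xi)=n\,h_\eta(X,\xi),
\]
the standard additivity of Furstenberg entropy under convolution powers. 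Hence $-\frac{2}{n}\int\log\Xi_X\,d\eta^{*n}\le h_\eta(X,\xi)$. Combining this with the first part and with \eqref{Eq:maximality of Furstenberg boundary} gives the chain
\[
h_\eta(\Pi_\eta,\nu_\infty)\le 2\liminf_n\Big(-\tfrac1n\int\log\Xi_X\,d\eta^{*n}\Big)\le 2\limsup_n\Big(-\tfrac1n\int\log\Xi_X\,d\eta^{*n}\Big)\le h_\eta(X,\xi)\le h_\eta(\Pi_\eta,\nu_\infty).
\]
So all quantities are equal and the limit exists, which is \eqref{Eq:Lyap expo-weight coming from Harich Chandra function-inverse square integrable-Discretization}.

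It remains to derive the last assertions.
\begin{itemize}
\item The measure-preserving extension statement should follow from entropy equality via \cite[Theorem 1.3.(2)]{Furs-Glas 1}, applied to the factor map from $X$ to the Poisson boundary; its existence requires some care, e.g.\ passing through the Furstenberg--Glasner structure theory or the fact that maximal entropy forces the boundary map. I expect this factor-map existence to be the main obstacle.
\item For $\Xi_{\Pi_\eta}$: the map $X\to\Pi_\eta$ is measure-preserving, so the Radon–Nikodym cocycle of $X$ factors through $\Pi_\eta$. Hence $\Xi_X=\Xi_{\Pi_\eta}$, and the hypothesis transfers directly.
\end{itemize}
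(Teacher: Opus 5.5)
Your proposal matches the paper's proof step for step: the weight $\bigl((1+\fL)^d/\Xi_X\bigr)^p$ in Theorem \ref{T:Lyap Expo dominates entropy-inverse integrable weights}, the vanishing of $\Lc_{1+\fL}(G,\eta)$, letting $p\downarrow p_0$, then the Jensen/additivity sandwich, and finally $\Xi_X=\Xi_{\Pi_\eta}$ for the last claim. The factor map you flag as the main obstacle is supplied by \cite[Theorem 4.4]{Furs-Glas 1}, which the paper cites: a stationary space of maximal Furstenberg entropy is a measure-preserving extension of the Poisson boundary.

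One small fix concerns the moment hypothesis of that theorem, which the paper does not check at all. Your claim that $\int-\log\Xi_X\,d\eta^{*n}=\infty$ for a single $n$ forces the $\liminf$ to be $+\infty$ is unjustified. It is also unnecessary: since $\Xi_X\le 1$, your weight satisfies $\om\ge 1$, and the Jensen bound $H(G,\eta^{*n})\le\int\log\om\,d\eta^{*n}+\log\|\om^{-1}\|_1$ behind that theorem holds for each $n$ separately, with values in $[0,\infty]$.
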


\begin{proof}
Let $p>p_0$, and let
\begin{align*}
  \sg:=\frac{(1+\fL)^{d}}{\Xi_{X}}.
\end{align*}
It follows from Theorem \ref{T:Lyap Expo dominates entropy-inverse integrable weights} that 
$$ h_\eta(\Pi_{\eta},\nu_{\infty})\leq \liminf_{n\to \infty} \frac{1}{n} \int_{G} \log \sg^p (s)d\eta^{*n}(s).$$
On the other hand, by Theorem \ref{T:vanishing Lyap expo-log of length function}, 
$$\Lc_{1+\fL}(G,\eta)=\lim_{n\to \infty} \dfrac{1}{n}\int_{G}\log (1+\fL(s)) d\eta^{*n}(s)=0.$$
Hence, 
\begin{align*} 
  h_\eta(\Pi_{\eta},\nu_{\infty}) & \leq   \liminf_{n\to \infty} \frac{1}{n} \int_{G} \log \sg^p (s)d\eta^{*n}(s) \\ 
  & = p \liminf_{n\to \infty} \left[\dfrac{1}{n}\int_{G}\log (1+\fL(s))^d d\eta^{*n}(s)-  \dfrac{1}{n}\int_{G}\log \Xi_X(s)\, d\eta^{*n}(s) \right] \\ 
  & = p \liminf_{n\to \infty} \left[\dfrac{1}{n}\int_{G}(-\log \Xi_X)(s)\, d\eta^{*n}(s) \right].   
  \end{align*}
The result now follows by letting $p\to p_0^+$.

Now suppose that $\xi$ is $\eta$-stationary and $p_0=2$. Then we also have
\begin{align*}
  h_\eta(\Pi_{\eta},\nu_{\infty}) & \leq 2 \liminf_{n\to \infty} \left[\dfrac{1}{n}\int_{G}(-\log \Xi_X)(s)\, d\eta^{*n}(s) \right]  \\ 
  & \leq 2 \limsup_{n\to \infty} \left[\dfrac{1}{n}\int_{G}(-\log \Xi_X)(s)\, d\eta^{*n}(s) \right]  \\ 
  & \leq 2 \limsup_{n\to \infty} \left[\dfrac{1}{n}\int_{G}\int_X (-\log) \big[\rho(s,x)^{1/2}\big] d\xi(x) \, d\eta^{*n}(s) \right] \ \ (\text{as} -\log\ \text{is convex}) \\
    &=2\lim_{n\rightarrow \infty}\dfrac{1}{n}\int_{G}\int_X \bigg(-\frac{1}{2}\bigg)
    \log \rho(s,x) d\xi(x) d\eta^{*n}(s) \\
    &=\limsup_{n\to \infty} \dfrac{1}{n} h_{\eta^{*n}}(X,\xi) \\
    &=h_\eta(X,\xi) \\
    &\leq h_\eta(\Pi_{\eta},\nu_{\infty}). 
  \end{align*}
  That is, \eqref{Eq:Lyap expo-weight coming from Harich Chandra function-inverse square integrable-Discretization} holds. The last assertion follows from \cite[Theorem 4.4]{Furs-Glas 1} and the fact that Harish-Chandra functions of measure-preserving extensions are equal (see \cite[Proposition 1.8]{NevoZimmer}) (see also above).
\end{proof}

We now apply the preceding theorem to characterize the stationary actions of a group with property RD or the Kunze--Stein property under the assumption that the Koopman representation is weakly contained in the left-regular representation of the group. This generalizes \cite[Theorem 6.4]{ASdLSW1}, which dealt with countable discrete groups with rapid decay. 

\begin{thm}\label{T:charc amen action-RD groups}
Let $G$ be a second countable locally compact group, and let $\eta$ be a probability measure on $G$ with $\eta\in \BG$ such that $\eta$ has finite $n^{th}$-differential entropy, for all $n\in \N$.  Suppose that one of the following holds:\\
$(1)$ $(G,\fL)$ has property RD and $\eta$ has a finite $\log(1+\fL)$ moment.\\
$(2)$ $G$ has the Kunze-Stein Property.\\
Then 
for every $(G,\eta)$-stationary space $(X,\xi)$, the following are equivalent:
\begin{enumerate}[(i)]
    \item The space $(X,\xi)$ is a measure-preserving extension of the Poisson boundary of $(G,\eta)$.
    \item The action $G \curvearrowright (X,\xi)$ is amenable (in the sense of Zimmer).
    \item The representation $\pi_{X}$ is weakly contained in $\lambda_G$.
    \item $h_\eta(X,\xi)=h_\eta(\Pi_{\eta},\nu_{\infty})$.
\end{enumerate}
\end{thm}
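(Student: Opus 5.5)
The plan is to prove the cycle (i)$\Rightarrow$(ii)$\Rightarrow$(iii)$\Rightarrow$(iv)$\Rightarrow$(i). Two of these steps are standard.

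For (iv)$\Rightarrow$(i), I would argue as follows. The Poisson boundary is the maximal $\eta$-boundary, and by \cite[Theorem 4.3]{Furs-Glas 1} every stationary space $(X,\xi)$ has a canonical boundary map. Concretely, take the $\eta$-boundary factor of $(X,\xi)$, i.e. the maximal boundary $(B,\beta)$ with $(X,\xi)\to (B,\beta)$ measure-preserving. Then equality of entropies together with \cite[Theorems 1.3 and 4.4]{Furs-Glas 1} gives $h_\eta(B,\beta)=h_\eta(\Pi_\eta,\nu_\infty)$. Since $B$ is an $\eta$-boundary, the equality case of \eqref{Eq:maximality of Furstenberg boundary} identifies $B$ with $\Pi_\eta$, which is (i).

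For (i)$\Rightarrow$(ii), I would use that the Poisson boundary action of a spread-out measure is Zimmer amenable (Zimmer's classical result for $G$-boundaries). Amenability then passes to extensions: a measure-preserving extension is an extension, and extensions of amenable actions are amenable.

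For (ii)$\Rightarrow$(iii), I would invoke the known fact that the Koopman representation of a Zimmer-amenable nonsingular action is weakly contained in $\lambda_G$ (Kuhn, and Anantharaman-Delaroche in the locally compact case).

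The substantive step is (iii)$\Rightarrow$(iv), where I will use Theorem \ref{T:Square integrable-inverse Harich Chandra function-the general case} with $p_0=2$. If $\pi_X\prec\lambda_G$, then the positive-definite function $\Xi_X$ is a matrix coefficient of a representation weakly contained in $\lambda_G$, so it defines a bounded functional on $C^*_r(G)$. I treat the two hypotheses separately.

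In case (1), property RD gives a bounded inclusion $L^2(G,(1+\fL)^{d})\hookrightarrow C^*_r(G)$. Dualising, I would like to conclude $\Xi_X/(1+\fL)^{d}\in L^2(G)$, using that $\Xi_X\ge0$ to pair against positive test functions. To obtain the ``$L^{2+}$'' condition I would instead show $\Xi_X/(1+\fL)^{d'}\in L^{2}$ for a larger $d'$, and then interpolate with boundedness ($\Xi_X\le1$) to get $L^{p}$ for all $p>2$. Theorem \ref{T:Square integrable-inverse Harich Chandra function-the general case} then yields $h_\eta(X,\xi)=h_\eta(\Pi_\eta,\nu_\infty)$, using the finite $\log(1+\fL)$-moment.

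In case (2), I take $\fL=0$ and $d=0$. By Kunze--Stein, $L^{q}(G)\subseteq C^*_r(G)$ for every $q<2$, hence by duality $\Xi_X\in L^{p}(G)$ for every $p>2$, which is exactly $\Xi_X\in L^{2+}(G)$. The moment hypothesis is then vacuous.

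The main obstacle is the duality step. Weak containment gives $|\int f\,\Xi_X|\le\|\lambda(f)\|$ only for $f\in L^1(G)$. From this one must deduce membership of $\Xi_X$ in a weighted $L^p$-space, which requires taking suprema over $f\in C_c(G)$ in the unit ball of $L^q$ or $L^2(G,\om)$. I would handle this carefully via the density of $C_c(G)$ together with positivity of $\Xi_X$, using $f=\Xi_X^{p-1}(1+\fL)^{-\cdot}\mathbf 1_K$ truncated to compacts $K$ and monotone convergence.
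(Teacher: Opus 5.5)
Your proposal is correct and follows the paper's proof. It uses the same cycle with the same references for (i)$\Rightarrow$(ii)$\Rightarrow$(iii) and (iv)$\Rightarrow$(i), and proves (iii)$\Rightarrow$(iv) via Theorem \ref{T:Square integrable-inverse Harich Chandra function-the general case} with $p_0=2$ (taking $\fL=0$, $d=0$ in the Kunze--Stein case). You also spell out the duality step $\pi_X\prec\lambda_G\Rightarrow \Xi_X/(1+\fL)^d\in L^{2+}(G)$, which the paper leaves implicit; no larger $d'$ is needed there, since $\Xi_X/(1+\fL)^d\in L^2\cap L^\infty$ already.

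One caveat concerns (iv)$\Rightarrow$(i): simply cite \cite[Theorem 4.4]{Furs-Glas 1}, as the paper does. Your ``concrete'' sketch presupposes that $(X,\xi)$ is a measure-preserving extension of some $\eta$-boundary. That is not true for a general stationary space, and it is essentially what is being proved.
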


\begin{proof}
(i) $\Rightarrow$ (ii): It is well known that groups acts amenably on their Poisson boundary \cite{Zimm 2}. Hence, the implication follows from the fact that measure-preserving extensions of amenable actions are amenable \cite[Corollary C]{Adam-Elliot-Gior} (see \cite[Theorem 2.4]{Zimm 2} for the special case of ergodic actions and \cite[Theorem 9.2]{nevosageev} for a characterization of the Poisson boundary as the unique minimal amenable $(G,\eta)$-stationary space).

(ii) $\Rightarrow$ (iii): This is a general consequence of the action being amenable \cite[Theorem 4.3.1]{Ana-Dela 1}.

(iii) $\Rightarrow$ (iv): This follows directly from Theorem \ref{T:Square integrable-inverse Harich Chandra function-the general case}.

(iv) $\Rightarrow$ (i): This follows from \cite[Theorem 4.4]{Furs-Glas 1}.
\end{proof}

Note that the above theorem may not hold for groups without property RD or Kunze-Stein. For instance, for amenable groups $G$, assertion (iii) always holds, whereas assertion (iv) is not automatic.

The following is an immediate corollary of the preceding theorem, which can be viewed as a ``representation theoretic version'' of the approach taken in \cite{Kaim 1}. The argument is also similar to the one given in \cite[Corollary 6.5]{ASdLSW1} so we omit the proofs. 

\begin{cor} \label{C:charc entopry-RD groups}
Let $G$ be a second countable locally compact group, and let $\eta$ be a probability measure on $G$ with $\eta\in \BG$ such that $\eta$ has finite $n^{th}$-differential entropy, for all $n\in \N$.  Suppose that one of the following holds:\\
$(1)$ $(G,\fL)$ has property RD and $\eta$ has a finite $\log(1+\fL)$ moment.\\
$(2)$ $G$ has the Kunze-Stein Property.\\
Let $(X,\xi)$ be a $\eta$-boundary. Then the following are equivalent:
\begin{enumerate}[(i)]
    \item The action $G \curvearrowright (X,\xi)$ is amenable (in the sense of Zimmer).
    \item The representation $\pi_{X}$ is weakly contained in $\lambda_G$.
    \item The space $(X,\xi)$ is the Poisson boundary of $(G,\eta)$.
\end{enumerate}
\end{cor}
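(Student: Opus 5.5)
We derive Corollary \ref{C:charc entopry-RD groups} directly from Theorem \ref{T:charc amen action-RD groups}. Since an $\eta$-boundary $(X,\xi)$ is in particular a $(G,\eta)$-stationary space, that theorem applies to it: under either hypothesis (1) or (2), the conditions ``$G\curvearrowright(X,\xi)$ is Zimmer amenable'', ``$\pi_X\prec\lambda_G$'', ``$h_\eta(X,\xi)=h_\eta(\Pi_\eta,\nu_\infty)$'' and ``$(X,\xi)$ is a measure-preserving extension of $(\Pi_\eta,\nu_\infty)$'' are all equivalent. Hence (i)$\Leftrightarrow$(ii) of the corollary is immediate. It remains to connect these conditions with (iii).

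For (iii)$\Rightarrow$(i), if $(X,\xi)$ is the Poisson boundary, then it is trivially a measure-preserving extension of itself (via the identity factor). Alternatively, we can cite Zimmer's theorem that the action on the Poisson boundary is amenable \cite{Zimm 2}.

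For (ii)$\Rightarrow$(iii), Theorem \ref{T:charc amen action-RD groups} gives $h_\eta(X,\xi)=h_\eta(\Pi_\eta,\nu_\infty)$. We then use the fact recalled after \eqref{Eq:maximality of Furstenberg boundary}: for an $\eta$-boundary, equality of Furstenberg entropies holds if and only if there is a measurable $G$-equivariant isomorphism $(X,\xi)\cong(\Pi_\eta,\nu_\infty)$. So $(X,\xi)$ is the Poisson boundary.

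A second route gives the same conclusion and does not use entropy. By Theorem \ref{T:charc amen action-RD groups}, $(X,\xi)$ is a measure-preserving extension of $(\Pi_\eta,\nu_\infty)$. Since $X$ is a boundary, it is also a factor of $\Pi_\eta$. Composing the two maps gives a factor $\Pi_\eta\to X\to\Pi_\eta$, which by the uniqueness of boundary maps must be the identity. The step $X\to\Pi_\eta$ is measure-preserving, and a boundary admits no nontrivial measure-preserving extension structure over a factor, because conditional measures of boundaries along such factors are Dirac. This forces $X\cong\Pi_\eta$.

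I expect the only delicate point is this identification step, namely that equality of entropies characterizes the Poisson boundary among $\eta$-boundaries. This requires finite entropy $h_\eta(\Pi_\eta,\nu_\infty)<\infty$, which is guaranteed here by the finite differential entropy assumption together with \eqref{Eq:Furs entropy as limit of diff entopy}. The remaining steps are formal.
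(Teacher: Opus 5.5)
Your proof is correct and is essentially the argument the paper has in mind. The paper omits the proof as an immediate consequence of Theorem \ref{T:charc amen action-RD groups}: (i)$\Leftrightarrow$(ii) comes directly from that theorem, (iii)$\Rightarrow$(i) from Zimmer's amenability of the Poisson boundary, and (ii)$\Rightarrow$(iii) from the entropy equality together with the criterion recalled after \eqref{Eq:maximality of Furstenberg boundary} that an $\eta$-boundary of maximal (finite) entropy is the Poisson boundary.

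Your second route also works, but the appeal to Dirac conditional measures is unnecessary. Once the composite $\Pi_\eta\to X\to\Pi_\eta$ is the identity, the factor $\Pi_\eta\to X$ is injective on a conull set. It is therefore already a measure-space isomorphism.
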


\section{Discretization in groups with property RD or property KS}


In this section, we extend the main results established in Section \ref{sec:poissonboundariesstationarydynamicalsystems} and investigate the lattices associated with the class of groups considered therein. We begin with the following proposition, which generalizes \cite[Lemma 3.8]{Boyer 1}. Its proof follows arguments similar to those used in the proofs of \cite[Lemmas 3.7 and 3.8]{Boyer 1}.


\begin{lem} \label{lem:Square integrable-inverse Harich Chandra function-the general case}
Let $G$ be a locally compact second countable group with length function $\fL$, and let $\Gamma$ be a discrete subgroup in $G$. Let $\eta \in \B_c(G)$, and let $(X,\xi)$ be a $(G,\eta)$-stationary space. If
\begin{align*} 
 \frac{\Xi_{X}}{(1+\fL)^{d}}\in L^{p}(G),
\end{align*}
for some $p \geq 1$ and $d \geq 0$, then the restriction to $\Gamma$ satisfies
\begin{align*} 
 \frac{\Xi_{X}}{(1+\fL)^{d}}\in \ell^{p}(\Gamma).
\end{align*}
\end{lem}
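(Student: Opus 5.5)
The plan is to compare the value of $\Xi_X/(1+\fL)^d$ at a lattice point $\gamma$ with its average over a small neighbourhood $\gamma U$, and then use discreteness of $\Gamma$ to pick $U$ so that the translates $\gamma U$ are disjoint. Summing over $\Gamma$ then bounds the $\ell^p(\Gamma)$-norm by a constant times the $L^p(G)$-norm.

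The main input is a local Harnack-type inequality for $\Xi_X$ coming from stationarity. We have $\xi=\eta*\xi$, hence $\xi=\eta^{*k}*\xi$ for every $k$. Fix $k$ so that $\eta^{*k}$ has a density bounded below on some open set $V$; spread-outness plus non-degeneracy should allow this, possibly after passing to $\eta^{*k}*\check{\eta}^{*k}$-type arguments. Then $\xi \geq c\int_V t\xi\,dt$. Following \cite[Lemma 3.7]{Boyer 1}, I would then try to show that there are a neighbourhood $U$ of $e$ and a constant $C>0$ with
\[
\Xi_X(\gamma)\le C\,\Xi_X(\gamma u)\quad\text{for all } u\in U,\ \gamma\in G,
\]
or at least $\Xi_X(\gamma)\le C\,m(U)^{-1}\int_U \Xi_X(\gamma u)\,du$.

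To get this, I would write $\Xi_X(s)=\int_X \bigl(\tfrac{d s^{-1}\xi}{d\xi}\bigr)^{1/2}d\xi=\langle (s^{-1}\xi)^{1/2},\xi^{1/2}\rangle$ as a Hellinger affinity. The estimate I expect to use is the following: if $\xi' \ge c\,\xi$-type domination holds between translates, then the affinity $\int\sqrt{d\alpha\,d\beta}$ is monotone in each argument, so domination of measures transfers to the affinity with a factor $\sqrt{c}$.

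Concretely, the first thing I would try is this. Boundedness of the density, with compact support, gives $\eta^{*k}\le M\,m|_{K}$. Using $\xi=\eta^{*k}*\xi$ twice, once as an upper and once as a lower bound, I hope to get $(\gamma u)^{-1}\xi$ comparable to averages of $\gamma^{-1}t\xi$ for $t$ in a compact set. The cleaner route is probably to mimic Boyer's argument exactly: use the lower bound $\xi\ge c\,m|_V*\xi$ to dominate $u^{-1}\xi$ from below by $c'\xi$-averages for $u$ in a small symmetric $U$ with $UV\subseteq V'$. Monotonicity of the affinity then gives $\Xi_X(\gamma u)\ge c''\,\Xi_X(\gamma)$. \textbf{This Harnack comparison is the main obstacle.} It is exactly where $\eta\in\Bc(G)$ and stationarity are used, and the direction of the domination, left versus right translates, must be handled with care.

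Next, for the weight I use subadditivity and local boundedness. With $U$ relatively compact, $1+\fL(\gamma)\le 1+\fL(\gamma u)+\fL(u^{-1})\le (1+A)(1+\fL(\gamma u))$, where $A=\sup_{U^{-1}}\fL$. Hence $(1+\fL(\gamma u))^{-d}\ge (1+A)^{-d}(1+\fL(\gamma))^{-d}$, up to inverting the roles appropriately; since we need an upper bound on $(1+\fL(\gamma))^{-d}$, we use $1+\fL(\gamma u)\le(1+A)(1+\fL(\gamma))$. Combining with the Harnack bound gives $F(\gamma)\le C' F(\gamma u)$ for $F=\Xi_X/(1+\fL)^d$ and all $u\in U$.

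Finally, since $\Gamma$ is discrete, I shrink $U$ so that $UU^{-1}\cap\Gamma=\{e\}$; then the sets $\gamma U$, $\gamma\in\Gamma$, are pairwise disjoint. This gives
\[
\sum_{\gamma\in\Gamma}F(\gamma)^p\le \frac{C'^p}{m(U)}\sum_{\gamma}\int_{\gamma U}F(s)^p\,ds\le \frac{C'^p}{m(U)}\|F\|_{L^p(G)}^p<\infty,
\]
which proves the claim.
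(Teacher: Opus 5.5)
Your skeleton is the same as the paper's proof. It has three parts: a local Harnack comparison $\Xi_X(\gamma)\le C\,\Xi_X(\gamma u)$ for $u$ in a neighbourhood $U$ of $e$, the bound $1+\fL(\gamma u)\le(1+A)(1+\fL(\gamma))$ from subadditivity, and disjointness of the translates $\gamma U$, $\gamma\in\Gamma$, followed by averaging $F(\gamma)^p$ over $\gamma U$. The last two steps are correct as written, after your self-correction of the direction of the $\fL$-inequality.

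For the first step, the paper imports the fact that, for $\eta\in\Bc(G)$, the cocycle $\rho(u,x)$ is bounded above and below uniformly in $x$ for $u$ in a compact set. It then applies the cocycle identity pointwise. Your Hellinger-affinity formulation is an equivalent, slightly cleaner way to transfer such a bound to $\Xi_X$. Since $\Xi_X(\gamma u)=\int\sqrt{d(\gamma^{-1}\xi)\,d(u\xi)}$, a domination $u\xi\ge c\,\xi$ for all $u\in U$ immediately gives $\Xi_X(\gamma u)\ge\sqrt{c}\,\Xi_X(\gamma)$, with no left/right bookkeeping.

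The gap is that you never establish $u\xi\ge c\,\xi$ uniformly for $u$ near $e$. The input you propose, a lower bound for the density of $\eta^{*k}$ on \emph{some} open set $V$, does not yield it. Write $\phi$ for the density of $\eta$ and $K=\supp\,\eta$. Stationarity gives the upper bound $u\xi=\int\phi(u^{-1}t)\,t\xi\,dt\le\|\phi\|_\infty\int_{uK}t\xi\,dt$. To compare this with $\xi\ge c\int_V t\xi\,dt$, you need $uK\subseteq V$ for all $u\in U$. In other words, you need a lower bound on a full neighbourhood of the support used in the upper bound, and an arbitrary open $V$ does not provide this.

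The missing idea is a lower bound on a prescribed compact set:
\begin{enumerate}
\item For $j\ge 2$ the densities $\phi_j$ of $\eta^{*j}$ are continuous, since $L^1*L^\infty\subseteq C(G)$.
\item The set $\bigcup_{j\ge 2}\{\phi_j>0\}$ is an open subsemigroup, and it is dense by non-degeneracy.
\item An open dense subsemigroup $S$ is all of $G$, because $gS^{-1}\cap S\neq\emptyset$ for every $g$.
\item By compactness there is $N$ with $\Phi:=\sum_{j=2}^N\phi_j\ge c>0$ on $L:=\overline{U}K$.
\end{enumerate}
Then, for every $u\in\overline{U}$,
\[
\xi=\tfrac{1}{N-1}\int_G\Phi(t)\,t\xi\,dt\ \ge\ \tfrac{c}{N-1}\int_L t\xi\,dt\ \ge\ \tfrac{c}{(N-1)\|\phi\|_\infty}\,u\xi .
\]
Take $U$ symmetric, apply this to $u^{-1}$, and translate by $u$. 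This gives $u\xi\ge c'\xi$ on $U$, after which your argument goes through. This is exactly the local boundedness of $\rho$ that the paper takes from the literature.
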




\begin{proof}

Since $\Gamma$ is discrete subgroup of $G$, there is a compact neighborhood $V$ of the identity $e\in G$ such that $\{\gamma V: \gamma\in \Gamma \}$ are pairwise distinct. In particular, we must have 
\begin{align}\label{Eq:squre sum non dis vs dis-I}
   \sum_{\gamma\in \Gamma}  \int_V \frac{\Xi_{X}(\gamma s)^p}{(1+\fL(\gamma s))^{pd}} ds\leq  \int_G \frac{\Xi_{X}(s)^p}{(1+\fL(s))^{pd}} ds.
\end{align}
On the other hand, as $\fL$ is a length function and $V$ is compact, 
by standard arguments, there exist $C_1>0$ such that 
\begin{align}\label{Eq:squre sum non dis vs dis-II}
1+\fL(\gamma s) \leq C_1(1+\fL(\gamma)), \quad s\in V, \gamma\in \Gamma.
 \end{align}
Lastly, since $\eta$ has density with compact support, it is well-know that the Radon-Nikodym derivatives $\rho(\cdot,x)$ (associated with $(X,\xi)$ as a $(G,\eta)$-stationary space) are locally bounded 
(see \cite{Re-Bjr}). Hence, by \eqref{Eq:Radon-Nikodym cocycle-cocycle relations} and the fact that $V$ is compact, there exist $C_2>0$ such that for all $s\in V$ and $\xi$-almost every $x\in X$, we have
\begin{align} \label{Eq:Applicaton Hanckel inequality}
\rho(\gamma , x) \leq C_2(\rho(s\gamma , x)). 
\end{align}
This implies that
\begin{align}\label{Eq:squre sum non dis vs dis-III}
 \Xi_{X}(\gamma)\leq C_2\Xi_{X}(\gamma s), \quad s\in V, \gamma\in \Gamma.
\end{align}
The final results now follow by combining \eqref{Eq:squre sum non dis vs dis-I}, \eqref{Eq:squre sum non dis vs dis-II} , and \eqref{Eq:squre sum non dis vs dis-III}. 
\end{proof}

The following is our first main result.




\begin{thm}\label{T:Square integrable-inverse Harich Chandra function-Discretization} 
Let $\Gamma$ be a lattice subgroup in a locally compact second countable group $G$, let $\eta \in \B_c(G)$, and let $\mu$ be a Furstenberg discretization of $\eta$ (to $\Gamma$). Suppose that one of the following two conditions is satisfied:
\begin{enumerate}[(i)]
    \item the group $G$ has property RD with respect to a length function $\fL$ and $\mu$ has both finite entropy and finite $\log (1+\fL)$-moment;
    \item the group $G$ has the Kunze-Stein property and $\mu$ has finite entropy. 
\end{enumerate}
If $(\Pi_\eta, \nu_\infty)$ is the Poisson boundary of $(G,\eta)$, then
\begin{align}\label{Eq:maximal entropy-lattice}
h_\mu(\Pi_\eta, \nu_\infty)=h(\Gamma,\mu).
\end{align}
Consequently, $(\Pi_\eta, \nu_\infty)$, regarded as a $(\Gamma,\mu)$-stationary space, is the measure-preserving extension of the Poisson boundary of $(\Gamma,\mu)$. Moreover, if $\Gamma$ is cocompact, then $(\Pi_\eta, \nu_\infty)$ is isomorphic, as a $(\Gamma,\mu)$-stationary space, to the Poisson boundary of $(\Gamma,\mu)$.
\end{thm}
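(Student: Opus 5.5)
Since $\mu$ is a Furstenberg discretization, $(\Pi_\eta,\nu_\infty)$ is a $(\Gamma,\mu)$-stationary space. By the maximality inequality \eqref{Eq:maximality of Furstenberg boundary} applied to $\Gamma$, and the Kaimanovich--Vershik formula \eqref{Eq:Furs entropy as limit of diff entopy} for the discrete group $\Gamma$ (valid since $\mu$ has finite entropy), we get $h_\mu(\Pi_\eta,\nu_\infty)\le h(\Gamma,\mu)$. So the whole task is the reverse inequality. I would get it from the first part of Theorem \ref{T:Square integrable-inverse Harich Chandra function-the general case}, applied to the group $\Gamma$ (discrete, counting measure as Haar measure), the measure $\mu$, the restricted length function $\fL|_\Gamma$, and the non-singular $\Gamma$-space $X=\Pi_\eta$ with $p_0=2$. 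Its hypotheses on $\mu$ are finite entropy and finite $\log(1+\fL)$-moment. In the discrete setting $\mu\in\B(\Gamma)$ holds automatically, and finite entropy of all convolution powers follows from subadditivity. Note that $\Xi_{\Pi_\eta}$ on $\Gamma$ is just the restriction of the $G$-function, since the Radon--Nikodym cocycle is the same.

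\textbf{Step 1 (decay of $\Xi$ on $G$).} In case (i), $G$ is equipped with $\fL$ and property RD. Then $\pi_{\Pi_\eta}$ is weakly contained in $\lambda_G$, since the action on the Poisson boundary is Zimmer amenable \cite{Zimm 2} and amenable actions have Koopman representation weakly contained in $\lambda_G$ \cite{Ana-Dela 1}. Property RD then gives a Haagerup-type inequality: a positive-definite function associated with a representation weakly contained in $\lambda_G$ satisfies $\Xi/(1+\fL)^{d'}\in L^{2+\epsilon}(G)$ for all $\epsilon>0$, for suitable $d'$. The standard argument pairs $\Xi$ against $f\in L^2(G,(1+\fL)^d)$ and uses RD to bound $|\langle \Xi,f\rangle|\le C\|f\|_{2,(1+\fL)^d}$, by duality. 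In case (ii), the Kunze--Stein property gives that matrix coefficients of representations weakly contained in $\lambda_G$ lie in $L^{2+\epsilon}(G)$, with $d=0$.

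An alternative route to Step 1 is the conclusion of Theorem \ref{T:Square integrable-inverse Harich Chandra function-the general case} itself: it asserts that $\Xi_{\Pi_\eta}$ satisfies \eqref{Eq:square invertible-Harich Chandra function-Discretization} with $p_0=2$, provided some stationary $X$ does. I would nonetheless establish Step 1 directly via weak containment as described.

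\textbf{Step 2 (transfer to $\Gamma$).} Lemma \ref{lem:Square integrable-inverse Harich Chandra function-the general case} applies because $\eta\in\B_c(G)$ and $\Pi_\eta$ is $(G,\eta)$-stationary. It turns $\Xi/(1+\fL)^{d}\in L^{p}(G)$ into $\Xi/(1+\fL)^{d}\in\ell^{p}(\Gamma)$ for every $p>2$. This is the condition $\ell^{2+}(\Gamma)$ needed in Theorem \ref{T:Square integrable-inverse Harich Chandra function-the general case}.

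\textbf{Step 3 (conclusion).} Applying the first assertion of that theorem to $\Gamma$ gives
\[
h(\Gamma,\mu)\le 2\liminf_n \frac{-1}{n}\int\log\Xi\,d\mu^{*n}.
\]
Since $\Pi_\eta$ is $\mu$-stationary, Jensen's inequality and the cocycle identity, exactly as in the second chain of that proof, bound the right-hand side by $h_\mu(\Pi_\eta,\nu_\infty)$. This gives equality \eqref{Eq:maximal entropy-lattice}.

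For the consequences, let $\Pi_\mu$ be the Poisson boundary of $(\Gamma,\mu)$. Then $(\Pi_\eta,\nu_\infty)$ has the same entropy as $\Pi_\mu$, so \cite[Theorem 4.4]{Furs-Glas 1} shows it is a measure-preserving extension of $\Pi_\mu$. For cocompact $\Gamma$ I would show that $(\Pi_\eta,\nu_\infty)$ is a $\mu$-boundary, and then the equality case of \eqref{Eq:maximality of Furstenberg boundary} gives the isomorphism. To see that it is a $\mu$-boundary, I would use that bounded $\mu$-harmonic functions on $\Gamma$ extend to $\eta$-harmonic functions on $G$ via the Furstenberg construction of $\mu$ (hitting measures on a compact fundamental domain). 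Alternatively, I would argue that the $\mu$-random walk tracks an $\eta$-random walk at bounded distance, so the martingale convergence of $s_n\nu_\infty$ to point masses passes to $\Gamma$.

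The main obstacle I expect is Step 1 in the RD case: deriving $L^{2+}$ integrability of $\Xi/(1+\fL)^d$ from weak containment plus RD for a non-discrete $G$. A second difficulty is the cocompact boundary claim, which needs some structure of the discretization beyond Definition \ref{D:Furstenberg discretization}.
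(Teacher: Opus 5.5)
Your proof of $h_\mu(\Pi_\eta,\nu_\infty)=h(\Gamma,\mu)$ and of the measure-preserving-extension consequence follows the paper. Amenability of the boundary action gives weak containment of $\pi_{\Pi_\eta}$ in $\lambda_G$. RD or Kunze--Stein turns this into $\Xi_{\Pi_\eta}/(1+\fL)^d\in L^{2+}(G)$, the transfer lemma moves it to $\ell^{2+}(\Gamma)$, and the decay theorem applied to $(\Gamma,\mu)$ with $p_0=2$ gives the equality.

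The gap is the cocompact clause. Both routes you suggest use how $\mu$ was built: extending bounded $\mu$-harmonic functions via hitting measures on a fundamental domain, and coupling the $\mu$-walk to an $\eta$-walk at bounded distance. They assume $\mu$ came from Furstenberg's stopping-time construction. The theorem, however, is about an \emph{arbitrary} Furstenberg discretization. Definition~\ref{D:Furstenberg discretization} only says that $\eta$-stationary measures are $\mu$-stationary, and gives no link between the two random walks. So, as you suspected, your argument does not prove the ``moreover'' statement.

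The missing idea, which the paper uses, is that cocompactness alone lets you transfer the contracting behaviour of $\nu_\infty$ from $G$ to $\Gamma$, with no information about $\mu$. Take a relatively compact fundamental domain $W$ and write each $t\in G$ as $t=\gamma s$ with $\gamma\in\Gamma$ and $s\in W$. For $\eta\in\Bc(G)$, the Radon--Nikodym derivatives $d(s\nu_\infty)/d\nu_\infty$ with $s\in W$ are bounded above and below by constants independent of $s$. This is the local boundedness of the cocycle already used in the transfer lemma. Hence $t\nu_\infty=\gamma(s\nu_\infty)$ and $\gamma\nu_\infty$ are uniformly comparable. The paper uses this to show that $\Gamma$-translates of $\nu_\infty$ approach point masses whenever $G$-translates do. It then concludes that $\Pi_\eta$ is a $\mu$-boundary, hence the Poisson boundary of $(\Gamma,\mu)$ by maximality of entropy.

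A contracting net is weaker than convergence along $\mu$-paths, so a clean way to finish is measure-theoretic:
\begin{itemize}
\item Let $\nu_\infty(A)>0$. Martingale convergence for the bounded $\eta$-harmonic function $t\mapsto\nu_\infty(t^{-1}A)$ gives $\sup_{t\in G}\nu_\infty(tA)=1$.
\item Comparability then gives $\sup_{\gamma\in\Gamma}\nu_\infty(\gamma A)=1$.
\item A measure-preserving extension of $\Pi_\mu$ with non-trivial fibres cannot have this property. The largest atom weight of the fibre measures is $\Gamma$-invariant, so by ergodicity of $\Pi_\mu$ you can choose $A$ meeting almost every fibre in conditional measure a fixed $c\in(0,1)$. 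Then $\nu_\infty(\gamma A)=c$ for all $\gamma$.
\end{itemize}
Together with the measure-preserving extension you already have from the entropy equality, this identifies $(\Pi_\eta,\nu_\infty)$ with the Poisson boundary of $(\Gamma,\mu)$.
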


\begin{proof}
Let $(\Pi_\eta, \nu_\infty)$ be the Poisson boundary of $(G,\eta)$. Since $G\curvearrowright (\Pi_\eta,\nu_\infty)$ is amenable, $\pi_{\Pi_\eta}$ is weakly contained in $\lambda_G$ (by \cite[Theorem 4.3.1]{Ana-Dela 1}). Hence it follows that under either conditions (i) or (ii), we have 
\begin{align*}
 \frac{\Xi_{\Pi_\eta}}{(1+\fL)^{d}}\in L^{2+}(G),
\end{align*}
for some $d\geq 0$. 
This, together with Lemma \ref{lem:Square integrable-inverse Harich Chandra function-the general case}, implies that the restriction of
\begin{align*}
 \frac{\Xi_{\Pi_\eta}}{(1+\fL)^{d}} \ \text{on}\ G\ \text{belongs to}\  \ell^{2+}(\Gamma).
\end{align*}
Therefore, by Theorem \ref{T:Square integrable-inverse Harich Chandra function-the general case},
$h_\mu(\Pi_\eta, \nu_\infty)=h(G,\mu)$ (recall that $(\Pi_\eta, \nu_\infty)$ is also a $(G,\mu)$-stationary space as $\mu$ is a Furstenberg discretization of $\eta$). In particular, by \cite[Theorem 4.4]{Furs-Glas 1},  $(\Pi_\eta, \nu_\infty)$ is a measure-preserving extension of the Poisson boundary of $(\Gamma,\mu)$ as a $(\Gamma,\mu)$-stationary space. 


Now suppose that $\Gamma$ is cocompact. We show that in this case $(\Pi_\eta, \nu_\infty)$ is a $\mu$-boundary. Since, by \eqref{Eq:maximal entropy-lattice}, it also has maximal entropy, it follows that $(\Pi_\eta, \nu_\infty)$ must be the Poisson boundary of $(\Gamma,\mu)$. Let $x\in \Pi_\eta$. Since $\Pi_\eta$ is $\eta$-proximal, there is a net $\{t_i\}_{i\in I}$ in $G$ such that 
$t_i\, \nu_\infty$ converges in the weak$^*$-topology to the Dirac measure $\delta_y$, for some $y\in \Pi_\eta$. Let $W$  
be a compact fundamnetal domain of $\Gamma$ in $G$ so that such that $\{\gamma W: \gamma\in \Gamma \}$ are pairwise disjoint and $G=\bigcup_{\gamma\in \Gamma}  \gamma W$. Thus, we may write 
$t_i=\gamma_i s_i$, where $\gamma_i\in \Gamma$ and $s_i\in W$ for $i\in I$. On the other hand, it follows from \eqref{Eq:Radon-Nikodym cocycle-defn} and \eqref{Eq:Applicaton Hanckel inequality} (using that $W$ is compact) that there exist $C,D>0$ such that for every $\gamma\in \Gamma$, $s\in W$, and $\eta$-almost every $x\in X$, we have 
\begin{align*}
    C \frac{d(\gamma^{-1}\, \nu_\infty)}{d \nu_\infty}(x)\leq \frac{d(s^{-1}\gamma^{-1}\, \nu_\infty)}{d \nu_\infty}(x) \leq D \frac{d(\gamma^{-1}\, \nu_\infty)}{d \nu_\infty}(x).
\end{align*}
This, together with the fact that $t_i\, \nu_\infty$ converges to $\delta_y$ in the weak$^*$-topology, implies that $\gamma_i\, \nu_\infty$ converges to $\delta_y$ in the weak$^*$-topology. That is, $\Pi_\eta$ is $\mu$-proximal, so that $(\Pi_\eta, \nu_\infty)$ is a $\mu$-boundary. This completes the proof.  
\end{proof}

In the case where the lattice is irreducible, we can use a result of Bader and Shalom from \cite{badershalom} to show that the equality of the Poisson boundary of the lattice and its ambient group holds without the assumption of cocompactness.     

\begin{cor}
Let $G_1$ and $G_2$ be locally compact second countable groups, and let $\eta_1 \in \B_c(G_1)$ and $\eta_2 \in \B_c(G_2)$. Let $\Gamma$ be an irreducible lattice in $G:=G_1 \times G_2$, and let $\mu$ be a Furstenburg discretization of $\eta:=\eta_1\times \eta_2$. 
Suppose that the Poisson boundary of $(G,\eta)$ has a unique $\eta$-stationary measure. Then the Poisson boundary of $(\Gamma,\mu)$ coincides with the Poisson boundary of $(G,\eta)$ as a measurable $\Gamma$-space. 
\end{cor}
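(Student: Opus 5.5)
The plan is to identify the Poisson boundary $(B,\nu_B)$ of $(\Gamma,\mu)$ with $(\Pi_\eta,\nu_\infty)$ by constructing $\Gamma$-equivariant factor maps in both directions. The first direction is dynamical. The second is where Bader--Shalom enters, through the product structure of $G$ and irreducibility of $\Gamma$. Unlike Theorem \ref{T:Square integrable-inverse Harich Chandra function-Discretization}, no entropy information (RD or Kunze--Stein) is assumed, so the argument cannot rest on the maximality of Furstenberg entropy. It has to be a boundary-theoretic argument instead.

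\textbf{Step 1: $(\Pi_\eta,\nu_\infty)$ is a $\Gamma$-factor of $(B,\nu_B)$.} First, since $\eta=\eta_1\times\eta_2$, I will use \cite{badershalom} to write the Poisson boundary as a product $(\Pi_\eta,\nu_\infty)=(\Pi_{\eta_1}\times\Pi_{\eta_2},\nu_1\times\nu_2)$, with $G_i$ acting on the $i$-th factor. Because $\mu$ is a Furstenberg discretization, $(\Pi_\eta,\nu_\infty)$ is $(\Gamma,\mu)$-stationary. Let $\psi\colon B\to \mathrm{Prob}(\Pi_\eta)$ be the boundary map, sending $b=\lim W_n$ to $\lim W_n\nu_\infty$, where $W_n$ is the $\mu$-random walk. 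This map is $\Gamma$-equivariant and satisfies $\int_B\psi(b)\,d\nu_B(b)=\nu_\infty$. It remains to show that $\psi$ takes values in Dirac masses, i.e.\ that $\nu_\infty$ is $\mu$-proximal.

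\textbf{Step 2: proximality, where uniqueness of the stationary measure enters.} I will show that for $\nu_B$-a.e.\ $b$ the limit measure $\psi(b)$ is a Dirac mass. The idea is to use the product structure: any limit $\lim W_n\nu_\infty$ is a product $\lambda_1\times\lambda_2$ of limits of the coordinate projections of $W_n$ acting on $\nu_1$ and $\nu_2$. The uniqueness of the $\eta$-stationary measure on $\Pi_\eta$ upgrades this. Averaging $\psi$ over the compact fundamental-domain-free (here possibly noncompact) coset structure via the induced space $G\times_\Gamma B$ produces a $(G,\eta)$-stationary measure on $\Pi_\eta$ of the form $\int \psi\,d\nu_B$, which must equal $\nu_\infty$. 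Since $\nu_\infty$ is the unique stationary measure, the $\eta$-boundary property of $\Pi_\eta$ (its $G$-proximality) is inherited by the $\Gamma$-random walk. Concretely, I plan to invoke the Bader--Shalom result that for an irreducible lattice a $\Gamma$-stationary measure on the $G$-boundary $\Pi_{\eta_1}\times\Pi_{\eta_2}$ is determined by its $G$-stationary counterpart. I expect this step to be the main obstacle, since without cocompactness the compact-fundamental-domain argument in the proof of Theorem \ref{T:Square integrable-inverse Harich Chandra function-Discretization} is unavailable.

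\textbf{Step 3: the reverse factor map.} I will apply the Bader--Shalom factor theorem \cite{badershalom} to $(B,\nu_B)$. It says that a $(\Gamma,\mu)$-boundary which is amenable and sits over $\Pi_{\eta_1}\times\Pi_{\eta_2}$ arises as the restriction of a $G$-space. Irreducibility of $\Gamma$ makes the projections of $\Gamma$ to $G_1$ and $G_2$ dense, and this is what forces the $\Gamma$-action on $B$ to extend to a $G$-action. With that extension, $B$ becomes a $(G,\eta)$-stationary space, and its stationary measure pushes forward to the unique stationary measure $\nu_\infty$. Maximality of the Poisson boundary among $(G,\eta)$-boundaries then shows that the factor $B\to\Pi_\eta$ from Step 1 is an isomorphism, and hence it is a $\Gamma$-isomorphism.
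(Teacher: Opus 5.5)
There is a genuine gap, located at the ingredient you deliberately set aside. The Bader--Shalom factor theorem \cite[Theorem 1.7]{badershalom} is about $\Gamma$-\emph{quotients} of $\Pi_{\eta_1}\times\Pi_{\eta_2}$: it says that such a quotient is $\Gamma$-isomorphic to a $G$-factor. In Step 3 you apply it to $B$, which by Step 1 is an \emph{extension} of $\Pi_\eta$ (you have a map $B\to\Pi_\eta$). For extensions the theorem says nothing. A $\Gamma$-space lying over a $G$-space need not carry any $G$-action, amenable or not, and density of the projections of $\Gamma$ does not produce one. What you need is a $\Gamma$-factor map in the other direction, $\Pi_\eta\to B$. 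The paper gets it from Theorem \ref{T:Square integrable-inverse Harich Chandra function-Discretization}. That theorem gives $h_\mu(\Pi_\eta,\nu_\infty)=h(\Gamma,\mu)$, so $(\Pi_\eta,\nu_\infty)$ is a measure-preserving extension of $(B,\nu_B)$, and Bader--Shalom then identifies $B$ with a $(G,\eta)$-boundary $X$. This requires RD or Kunze--Stein, together with finite entropy (and finite $\log(1+\fL)$-moment) for $\mu$. The printed statement omits these hypotheses, but the sentence preceding the corollary says that only cocompactness is being dropped. So the argument \emph{does} rest on entropy maximality, and your proposal offers nothing that replaces it.

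Step 2 does not close the gap either. The averaging-plus-uniqueness trick only works once the space is already a $G$-space. The measure on the induced space $G\times_\Gamma B$ has no reason to be $\eta$-stationary, because stationarity does not pass from $(\Gamma,\mu)$ to $(G,\eta)$ under induction. The identity $\int_B\psi\,d\nu_B=\nu_\infty$ that you would recover is just the barycenter formula you already had, and it is compatible with $\psi(b)$ failing to be a Dirac mass. Uniqueness of the $\eta$-stationary measure is a statement about $(G,\eta)$ and by itself says nothing about where the limits $\lim W_n\nu_\infty$ of the $\mu$-walk concentrate. The ``Bader--Shalom result'' you invoke for this step is not in that paper.

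In the paper the order is reversed, and $\mu$-proximality of $\nu_\infty$ is never needed. Once $B\cong X$ with $X$ a $G$-factor of $\Pi_\eta$, the $\Gamma$-equivariant boundary map $X\to P(\Pi_\eta)$ is averaged over $G/\Gamma$ into a $G$-equivariant map. The $\eta$-proximality of $\Pi_\eta$ and uniqueness of its $\eta$-stationary measure turn this into a $G$-factor $X\to\Pi_\eta$. Factor maps in both directions between $\eta$-boundaries then give $X\cong\Pi_\eta$.
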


\begin{proof}
Let $(\Pi_\eta, \nu_\eta)$ and $(\Pi_\mu, \nu_\mu)$ be the Poisson boudaries of $(G,\eta)$ and $(\Gamma,\mu)$, respectively. It follows from Theorem \ref{T:Square integrable-inverse Harich Chandra function-Discretization} and \cite[Theorem 1.7]{badershalom} that $(\Pi_\mu, \nu_\mu)$ is $\Gamma$-isomorphic (as a $\Gamma$-space) to a $\eta$-boundary $(X,\xi)$ of $(G,\eta)$. 
Since $X$ is a $\Gamma$-space, by a result of Furstenberg, there is a $\Gamma$-equivariant map $\Phi: X \to P(\Pi_\eta)$. Let $\widetilde{\Phi}: X \to P(\Pi_\eta)$ be given by 
$$\widetilde{\Phi}(x)=\int_{G/\Gamma} \delta_s\Phi(\delta_{s^{-1}} x) d(s\Gamma) \ \ \ (x\in X),$$
where $d(s\Gamma)$ is the $G$-invariant probability measure on $G/\Gamma$. It is straightforward to verify that $\widetilde{\Phi}$ is well defined and, in fact, it is a $G$-equivariant map. The rest of the arguments follow in the same way as the proof of \cite[Theorem 9.2]{nevosageev}. Indeed, since $\Pi_\eta$ is $\eta$-proximal, we have from \cite[Corollary IV.2.10]{Marg1} that the push-forward measure $\widetilde{\Phi}^*(\xi)$ takes values on $\Pi_\eta$, $\eta$-almost surely. Hence, $\widetilde{\Phi}^*(\xi)$ becomes a $\eta$-stationary measure on $\Pi_\eta$ so that it must be the same as $\nu_\eta$ as it is the unique $\eta$-stationary measure on $\Pi_\eta$. In other words, there is a $G$-factor from the $\eta$-boundary $(X,\eta)$ into $(\Pi_\eta,\nu_\eta)$ so that they must be $G$-equivariantly isomorphic. 

\end{proof}

Let $G$ be a connected non-compact semisimple Lie group with finite center, let $K$ be a maximal compact subgroup of $G$ and $P$ a minimal parabolic subgroup, so that $G=KP$. For every $\eta\in \BG$, there is a unique probability measure $\nu_\infty$ on $G/P$ such that $(G/P,\nu_\infty)$ is the Poisson boundary of $(G,\eta)$ (\cite{Furstenberg1963a}).
Furthermore, for any lattice $\Gamma$ of $G$, there is a Furstenberg discretization $\mu$ of $\eta$ to $\Gamma$ such that $(G/P,\nu_\infty)$ is the Poisson boundary of $(\Gamma,\mu)$ (\cite{Furstenberg1973}). In fact, any Furstenberg discretizations of $\eta$ to $\Gamma$ having a finite first moment holds this property.

Below, we present an analogous of Theorem \ref{T:charc amen action-RD groups} for Furstenberg discretizations measure on $\Gamma$ of $\eta$ in the case where the Poisson measure on $G/P$ is the unique $K$-invariant probability measure. These results are not covered by Theorem \ref{T:charc amen action-RD groups} because many of such lattices may not have RD or Kuntz-Stein.  
We recall that a Riemannian metric on a connected semisimple Lie group is a smooth inner product on the tangent spaces of the group that allows the measurement of lengths, angles, and curvature. 
Such a metric is typically chosen to be left-invariant, ensuring that the geometry respects the group structure. In particular, it provides a natural length function on a connected semisimple Lie group.


\begin{thm}\label{T:Square integrable-inverse Harich Chandra function-lattice in Lie groups}
Let $G$ be a connected semisimple Lie group, let $\Gamma$ be a lattice in $G$, and let $\eta\in \BG$ be such that the Poisson measure on $G/P$ is the unique $K$-invariant probability measure $\nu_\infty^K$. Suppose that $\mu$ is a Furstenberg discretization of $\eta$ to $\Gamma$ such that $\mu$ has both finite entropy and finite logarithmic moment with respect to the Riemannian metric $\fL$ on $G$. 
Then 
for every $(\Gamma,\mu)$-stationary space $(X,\xi)$, the following are equivalent:
\begin{enumerate}[(i)]
    \item The space $(X,\xi)$ is a measure-preserving extension of the Poisson boundary of $(\Gamma, \mu)$.
    \item The action $\Gamma \curvearrowright (X,\xi)$ is amenable (in the sense of Zimmer).
    \item The representation $\pi_{X}$ is weakly contained in $\lambda_\Gamma$.
    \item $h_\mu(X,\xi)=h_\mu(G/P,\nu^K_{\infty})$.
\end{enumerate}
\end{thm}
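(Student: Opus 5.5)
We follow the cycle of implications from Theorem \ref{T:charc amen action-RD groups}. For each lattice-level step we either use a general fact about stationary actions of countable groups or transfer information from $G$ to $\Gamma$. The ambient group enters only through the $K$-invariant Poisson measure: its Harish-Chandra function is the classical spherical function $\Xi$ of $G$. This function has the decay needed by Theorem \ref{T:Square integrable-inverse Harich Chandra function-the general case} even when $\Gamma$ itself has neither property RD nor the Kunze--Stein property.

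The implications (i) $\Rightarrow$ (ii) $\Rightarrow$ (iii) go exactly as before. The Poisson boundary of $(\Gamma,\mu)$ is an amenable $\Gamma$-space, measure-preserving extensions of amenable actions are amenable \cite{Adam-Elliot-Gior}, and amenable actions have Koopman representations weakly contained in $\lambda_\Gamma$ \cite{Ana-Dela 1}. For (iv) $\Rightarrow$ (i), we first establish the following claim:
\begin{align*}
h_\mu(G/P,\nu_\infty^K)=h(\Gamma,\mu),
\end{align*}
so that $(G/P,\nu^K_\infty)$ is the Poisson boundary of $(\Gamma,\mu)$ (it is a $\mu$-boundary with maximal entropy). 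Then (iv) says $h_\mu(X,\xi)$ equals the maximal entropy, and \cite[Theorem 4.4]{Furs-Glas 1} gives (i).

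To prove the claim, note that $\nu_\infty^K$ is $K$-invariant, so $\Xi_{G/P}$ is Harish-Chandra's $\Xi$-function. By the classical estimates, $\Xi(g)\le C(1+\fL(g))^{d}e^{-\rho(H(g))}$, and $\Xi/(1+\fL)^{d'}\in L^{2+\epsilon}(G)$ for suitable $d'$ and every $\epsilon>0$. Lemma \ref{lem:Square integrable-inverse Harich Chandra function-the general case} then transfers this integrability to $\ell^{2+}(\Gamma)$. There is a subtlety here: the lemma is stated for $\eta\in\B_c(G)$, and it uses local boundedness of $\rho(\cdot,x)$. For $G/P$ with the $K$-invariant measure, however, the Radon--Nikodym cocycle is explicit, $e^{-2\rho(H(g^{-1}k))}$, and it is continuous in $g$. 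So inequality \eqref{Eq:squre sum non dis vs dis-III} holds directly, and the lemma's proof goes through for any $\eta\in\BG$. We then apply Theorem \ref{T:Square integrable-inverse Harich Chandra function-the general case} to the countable group $\Gamma$ with Haar measure the counting measure, the restricted subadditive function $\fL|_\Gamma$, and the stationary $\Gamma$-space $(G/P,\nu_\infty^K)$, taking $p_0=2$. Finite entropy and finite $\log(1+\fL)$-moment of $\mu$ are exactly the hypotheses required. This yields $h(\Gamma,\mu)=h_\mu(G/P,\nu^K_\infty)$.

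For (iii) $\Rightarrow$ (iv), suppose $\pi_X\prec\lambda_\Gamma$. We would like to conclude that $\Xi_X/(1+\fL)^{d}\in\ell^{2+}(\Gamma)$ and then apply Theorem \ref{T:Square integrable-inverse Harich Chandra function-the general case} again. The route I would try first is to compare with the ambient group. Weak containment in $\lambda_\Gamma$ gives $\Xi_X(\gamma)\le\|\lambda_\Gamma(\cdot)\|$-type bounds through positive-definite functions. Inducing from $\Gamma$ to $G$, $\operatorname{Ind}_\Gamma^G\lambda_\Gamma=\lambda_G$, so the induced representation is weakly contained in $\lambda_G$, and its matrix coefficients are dominated by $\Xi$ (Cowling--Haagerup--Howe). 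Restricting back to $\Gamma$ with the uniform comparison over a fundamental domain as in the lemma gives $\Xi_X(\gamma)\le C\,\Xi(\gamma)$ on $\Gamma$. That is the required decay, and it produces $h_\mu(X,\xi)=h(\Gamma,\mu)=h_\mu(G/P,\nu^K_\infty)$ by the claim.

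\textbf{Main obstacle.} The hardest step is this domination $\Xi_X\le C\,\Xi|_\Gamma$ for a $\Gamma$-space that is not a restricted $G$-space. The function $\Xi_X$ is a coefficient of the $\Gamma$-vector $1$, whereas the induced representation involves vectors in $L^2(G/\Gamma\text{-bundle})$. I would handle it by choosing the induced vector $f=\mathbf 1_W\otimes 1$ for a fundamental domain $W$. When $\Gamma$ is not cocompact, $W$ is noncompact, so I would control the resulting averaging error using $K$-finiteness and Harish-Chandra's estimate on the Siegel sets.
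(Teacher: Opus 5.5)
Your steps (i)$\Rightarrow$(ii)$\Rightarrow$(iii) and (iv)$\Rightarrow$(i) are fine. So is your derivation of $h_\mu(G/P,\nu^K_\infty)=h(\Gamma,\mu)$ from $\Xi\in L^{2+}(G)$, Lemma~\ref{lem:Square integrable-inverse Harich Chandra function-the general case} and Theorem~\ref{T:Square integrable-inverse Harich Chandra function-the general case} with $p_0=2$. The $\mu$-boundary remark is not needed, and the paper simply takes this identity from Furstenberg's identification of the Poisson boundary of $(\Gamma,\mu)$.

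The gap is in (iii)$\Rightarrow$(iv). The pointwise bound $\Xi_X\le C\,\Xi$ on $\Gamma$ does not follow from $\pi_X\prec\lambda_\Gamma$ via induction and Cowling--Haagerup--Howe. First, CHH controls only coefficients of $K$-finite vectors, and $F=\mathbf 1_W\otimes 1$ is not $K$-finite even for compact $W$. Second, $\la \mathrm{Ind}\,\pi_X(g)F,F\ra=\sum_{\gamma'}m(W\cap gW\gamma')\,\Xi_X(\gamma')$ is an average of $\Xi_X$ over many $\gamma'$, with individual weights that are typically tiny because left translation by $g$ and the right $\Gamma$-action differ by a conjugation. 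So a bound $C\,\Xi(g)$ on the induced coefficient would only give averaged control of $\Xi_X$. Third, and decisively, your argument uses nothing about $(X,\xi)$ beyond weak containment and nonnegativity of the coefficient. Under those hypotheses alone the pointwise bound is false, already for cocompact lattices. Take $\Gamma<\mathrm{PSL}_2(\R)$, $c\in\Gamma$ hyperbolic, and $v=\sum_{n\ge1}\frac1n\delta_{c^n}\in\ell^2(\Gamma)$. Then $\la\lambda_\Gamma(c^k)v,v\ra=\frac1k\sum_{m=1}^k\frac1m\sim\frac{\log k}{k}$, whereas $\Xi(c^k)$ decays exponentially in $k$. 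So this is not a Siegel-set technicality. Pointwise domination is essentially as strong as the theorem itself, since a posteriori $\Xi_X=\Xi_{G/P}$.

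The missing idea is that an averaged bound suffices, provided you pair $\Xi_X$ against $\Xi_{G/P}$. The paper uses Boyer's theorem that $\ell^\infty(\Gamma,\sg)\subseteq C^*_r(\Gamma)$ for $\sg=(1+\fL)^d/\Xi_{G/P}$ with $d$ large; this is a Harish-Chandra--Schwartz substitute for property RD, which $\Gamma$ may lack. Since $\pi_X\prec\lambda_\Gamma$, the functional $\la\pi_X(\cdot)1,1\ra$ is bounded on $C^*_r(\Gamma)$, hence on $\ell^\infty(\Gamma,\sg)$. Duality then gives $\Xi_X\,\Xi_{G/P}/(1+\fL)^d\in\ell^1(\Gamma)$. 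The numerator is the Harish-Chandra function of the product $\Gamma$-space $(X\times G/P,\xi\times\nu^K_\infty)$, which need not be stationary. So the first part of Theorem~\ref{T:Square integrable-inverse Harich Chandra function-the general case} with $p_0=1$ gives $h(\Gamma,\mu)\le\liminf_n -\frac1n\sum_\gamma\mu^{*n}(\gamma)\log\big(\Xi_X(\gamma)\Xi_{G/P}(\gamma)\big)$. Applying Jensen to each stationary factor separately bounds the right-hand side by $\frac12h_\mu(X,\xi)+\frac12h_\mu(G/P,\nu^K_\infty)=\frac12h_\mu(X,\xi)+\frac12h(\Gamma,\mu)$. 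Hence $h(\Gamma,\mu)\le h_\mu(X,\xi)$, and (iv) follows.
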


\begin{proof}
Similar to the proof of Theorem \ref{T:charc amen action-RD groups}, we only need to show that (iii) implies (iv).  
Let us first recall some background. Suppose that $\nu_\infty^K$ is the unique $K$-invariant probability measure $G/P$. Then, by our assumption and the statement above, $(G/P, \nu^K_\infty)$ is the Poisson boundary of $(G,\eta)$ which is also the Poisson boundary of
$(\Gamma,\mu)$. In particular,
\begin{align}\label{Eq:2}
h_\mu(G/P, \nu^K_\infty)=h(\Gamma,\mu).
\end{align}
We now consider the Harish-Chandra–Schwartz space associated with $\Gamma$:
\begin{align*}
  \ell^\infty(\Gamma,\sg)=\{ f:\Gamma \to \C \ : \  \sup_{\gamma\in \Gamma} |f(\gamma)|\sg(\gamma) <\infty \},
\end{align*}
where $\fL$ is the Riemannian metric on $G$ and 
\begin{align*}
  \sg:=\frac{(1+\fL)^{d}}{\Xi_{G/P}}, \textrm{ with some}\ d>0.
\end{align*}
By \cite[Theorem 2.1]{Boyer 2}, 
there exists $d>0$ sufficiently large such that $\ell^\infty(\Gamma,\sg) \subseteq C^*_r(\Gamma)$.
Now assume (iii) holds. Since $\pi_X$ is weakly contained in the left regular representation $\lambda_\Gamma$, we have a canonical bounded linear mapping
\begin{align*}
    \ell^\infty(\Gamma,\sg) \subseteq C^*_r(\Gamma) \mapsto C^*_{\pi_X}.
\end{align*}
Therefore, using duality, we obtain
\begin{align*}
\Xi_X:=\la \pi_X(\cdot) 1_X, 1_X \ra  \in   \ell^\infty(\Gamma,\sg)^*=\ell^1(\Gamma,\sg^{-1})^{**}.
\end{align*}
Since $\Xi_X$ is a function on $\Gamma$, we must have 
\begin{align*}
\Xi_X \in \ell^1(\Gamma,\sg^{-1}),
\end{align*}
or equivalently,
\begin{align*}
\frac{\Xi_{X\times G/P}}{(1+\fL)^{d}}=\frac{\Xi_X\Xi_{G/P}}{(1+\fL)^{d}}=\Xi_X\sg^{-1}\in \ell^1(\Gamma).
\end{align*}
Here the measure on $X\times G/P$ is the product measure $\xi\times \nu^K_\infty$ and the action of $\Gamma$ on $X\times G/P$ is given by
$$\gamma\cdot(x,y)=(\gamma x,\gamma y).$$
Therefore, by Theorem \ref{T:Square integrable-inverse Harich Chandra function-the general case}, 
\begin{align*}
 h(\Gamma,\mu) & \leq  \liminf_{n\to \infty} \left[\dfrac{-1}{n}\sum_{\gamma\in \Gamma}\mu^{\ast n}(\gamma) \log \Xi_{X\times G/P}(\gamma) \right] \\
 & = \liminf_{n\to \infty} \left[\dfrac{-1}{n}\sum_{\gamma\in \Gamma}\mu^{\ast n}(\gamma) \log \left(\Xi_X(\gamma) \Xi_{G/P}(\gamma) \right)\right]  \\
 & = \liminf_{n\to \infty} \left[\dfrac{-1}{n}\sum_{\gamma\in \Gamma}\mu^{\ast n}(\gamma) \left[\log \left(\Xi_X(\gamma)\right)+\log \left(\Xi_{G/P}(\gamma)\right) \right]\right] \\
    &\leq \frac{1}{2} h_\mu(X,\xi)+ \frac{1}{2} h_\mu(G/P,\nu^K_\infty) \\
     &= \frac{1}{2} h_\mu(X,\xi)+ \frac{1}{2}  h(\Gamma,\mu) \ \ \ (\text{by}\ \eqref{Eq:2}).
  \end{align*}
Hence
\begin{align*}
\frac{1}{2} h(\Gamma,\mu) \leq \frac{1}{2} h_\mu(X,\xi) \leq \frac{1}{2} h(\Gamma,\mu) .
  \end{align*}
  The proof is complete.
\end{proof}


We finish this section with the following theorem that proves a weaker version of the result presented in Theorem \ref{T:Avez entropy-p conv operators-RD} but for a larger class of groups. 

\begin{thm}
Let $G$ be a connected semisimple Lie group, let $\Gamma$ be a lattice in $G$, and let $\eta\in \BG$ be such that the Poisson measure on $G/P$ is the unique $K$-invariant probability measure. Suppose that $\mu$ is a Furstenberg discretization of $\eta$ to $\Gamma$ such that $\mu$ has both finite entropy and finite logarithmic moment with respect to the Riemannian metric $\fL$ on $G$. 
Then
\begin{align*}
c(\Gamma,\mu) \leq h(\Gamma,\mu)\leq 2 c(\Gamma,\mu), 
\end{align*}
where $c(\Gamma,\mu)=\lim_{p\rightarrow \infty}-p\log r_{\pf_q(\Gamma)}(\mu).$
\end{thm}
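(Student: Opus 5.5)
The lower inequality $c(\Gamma,\mu)\leq h(\Gamma,\mu)$ is the discrete analogue of Theorem \ref{T:Avez entropy-p conv operators-I}. I would invoke \cite[Theorem 5.1]{ASdLSW1} directly, or rerun the proof of Theorem \ref{T:Avez entropy-p conv operators-I} with counting measure on $\Gamma$ in place of Haar measure. There Shannon entropy replaces differential entropy, and $h(\Gamma,\mu)$ is the Avez entropy, which equals $h_\mu$ of the Poisson boundary of $(\Gamma,\mu)$. The argument needs only finite entropy of $\mu$ and the interpolation inequality \eqref{Eq:complex interpolation-convlution operators-norm relations}, so this half is routine.

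For the upper bound $h(\Gamma,\mu)\leq 2c(\Gamma,\mu)$, I would use the Harish-Chandra--Schwartz embedding from the proof of Theorem \ref{T:Square integrable-inverse Harich Chandra function-lattice in Lie groups}. By \cite[Theorem 2.1]{Boyer 2} there is $d>0$ with $\ell^\infty(\Gamma,\sg)\subseteq C^*_r(\Gamma)$, where $\sg=(1+\fL)^d/\Xi_{G/P}$. Hence, for finitely supported (and then suitable) $f$ on $\Gamma$,
\[
\|f\|_{\pf_2(\Gamma)}\leq C\sup_{\gamma}|f(\gamma)|\,\sg(\gamma).
\]
I would then interpolate, as in the Kunze--Stein part of Theorem \ref{T:Avez entropy-p conv operators-RD}, between $\pf_1=\ell^1$ and this weighted $\ell^\infty\to\pf_2$ bound. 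The target is an estimate
\[
\|\mu^{*n}\|_{\pf_q(\Gamma)}\leq C^{2/p}\,\|\mu^{*n}\|_{\ell^{q'}(\Gamma,\sg_{p'})}
\]
for an appropriate weighted $\ell^{q'}$-norm, with $q'\to 1$ as $p\to\infty$ and the exponent bookkeeping $p/p'\to 2$ (the analogue of $2(1-1/q_0)$ with $q_0$ replaced by the endpoint $\infty$, which gives the factor $2$). Since $r_{\pf_q}(\mu)^n\leq\|\mu^{*n}\|_{\pf_q}$, letting $p\to\infty$ and using Theorem \ref{T:compare weighted and non-weighted Shannon entropy}(ii) in its discrete form \cite[Theorem 4.4]{ASdLSW1} would yield
\[
H_{\sg}(\Gamma,\mu^{*n})\leq 2n\,c(\Gamma,\mu)+O(1).
\]

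It then remains to identify $\lim_n\frac1n H_\sg(\Gamma,\mu^{*n})$. By \eqref{Eq:weighted Avez entropy-difference entropy and Lyap expo} this equals
\[
h(\Gamma,\mu)-d\,\Lc_{1+\fL}(\Gamma,\mu)-\lim_n\frac1n\sum_\gamma\mu^{*n}(\gamma)\bigl(-\log\Xi_{G/P}(\gamma)\bigr).
\]
The Lyapunov term vanishes by Theorem \ref{T:vanishing Lyap expo-log of length function}, using the finite logarithmic moment. The last limit equals $\tfrac12 h_\mu(G/P,\nu^K_\infty)=\tfrac12 h(\Gamma,\mu)$. Indeed, $\Xi_{G/P}$ restricted to $\Gamma$ satisfies the hypothesis of Theorem \ref{T:Square integrable-inverse Harich Chandra function-the general case} with $p_0=2$, since $\Xi_{G/P}/(1+\fL)^d\in L^{2+}(G)$ for the Lie group, which passes to $\Gamma$ by Lemma \ref{lem:Square integrable-inverse Harich Chandra function-the general case}. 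This gives $\tfrac12 h(\Gamma,\mu)\leq 2c(\Gamma,\mu)$, which is too weak. So I would instead keep $\Xi_{G/P}$ on the left, writing $\|\mu^{*n}\|_{\pf_2}\lesssim\sup\mu^{*n}(1+\fL)^d/\Xi_{G/P}$. I expect the bookkeeping of the correct exponents to be the main obstacle. The natural route is to interpolate $\|\cdot\|_{\pf_q}\leq\|\cdot\|_1^{1-2/p}\|\cdot\|_{\pf_2}^{2/p}$ directly and bound $\|\mu^{*n}\|_{\pf_2}\leq C\,\|\mu^{*n}\cdot\sg\|_\infty$ via the $\ell^\infty(\Gamma,\sg)$ embedding. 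This gives
\[
-p\log r_{\pf_q}(\mu)\geq -\frac{2}{n}\log\|\mu^{*n}\sg\|_\infty-o(1).
\]
It then remains to show that $-\frac1n\log\|\mu^{*n}\sg\|_\infty$ is asymptotically at least $\frac12 h(\Gamma,\mu)$, using Shannon--McMillan--Breiman type concentration of $-\log\mu^{*n}$ near $nh$ and of $-\log\Xi_{G/P}$ near $nh/2$ on typical sets. Controlling $\sup$ rather than typical values is the hard point. I would try to handle it by replacing $\mu^{*n}$ with its restriction to the typical set and absorbing the remainder via the $\ell^1$ part of the interpolation.
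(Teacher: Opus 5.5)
Your lower bound $c(\Gamma,\mu)\le h(\Gamma,\mu)$ is fine; it is what the paper does, via the discrete form of Theorem \ref{T:Avez entropy-p conv operators-I}. The upper bound, however, is not established. The route you finally commit to cannot work, and the route you discarded was dropped because of a bookkeeping slip. Complex interpolation between $\ell^1(\Gamma)\to\pf_1(\Gamma)$ (norm $1$) and Boyer's $\ell^\infty(\Gamma,\sg)\to\pf_2(\Gamma)$ (norm $C$) at $\theta=2/p$ gives exactly
\[
\|f\|_{\pf_q(\Gamma)}\le C^{2/p}\,\bigl\|f\,\sg^{2/p}\bigr\|_{\ell^{q'}(\Gamma)}=C^{2/p}\,\|f\|_{q',\sg_{p'}},\qquad p'=\tfrac{p}{2},\quad \tfrac{1}{p'}+\tfrac{1}{q'}=1 .
\]
Taking $f=\mu^{*n}$, this gives
\[
-pn\log r_{\pf_q(\Gamma)}(\mu)\ \ge\ -2\log C+2\bigl(-p'\log\|\mu^{*n}\|_{q',\sg_{p'}}\bigr)\ \longrightarrow\ -2\log C+2H_{\sg}(\Gamma,\mu^{*n})\quad(p\to\infty).
\]
So $2H_\sg(\Gamma,\mu^{*n})\le n\,c(\Gamma,\mu)+2\log C$. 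The factor $2$ multiplies the entropy, not $c$, exactly as $2(1-1/q_0)$ multiplies $H$ in the Kunze--Stein part of Theorem \ref{T:Avez entropy-p conv operators-RD}. Together with your correct computation $h_\sg(\Gamma,\mu)=h-\tfrac12 h=\tfrac12 h$, this gives $h(\Gamma,\mu)\le c(\Gamma,\mu)$, which more than suffices.

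The paper argues differently. It invokes $\ell^q(\Gamma,\sg_p)\subseteq\pf_q(\Gamma)$ and the RD-type argument, which yields only $h_\sg(\Gamma,\mu)\le c(\Gamma,\mu)$, i.e.\ $\tfrac12 h\le c$. The direct interpolation you set up is sharper by exactly the factor $2$. Both versions tacitly need $\mu^{*n}\in\ell^{q_0}(\Gamma,\sg_{p_0})$ for some $p_0$, in order to apply the discrete form of Theorem \ref{T:compare weighted and non-weighted Shannon entropy}(ii) with the exponential weight $\sg$; this caveat is the same in either case.

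The replacement route fails for a structural reason, not a technical one. Interpolating operator norms, $\|\mu^{*n}\|_{\pf_q}\le\|\mu^{*n}\|_1^{1-2/p}\|\mu^{*n}\|_{\pf_2}^{2/p}$ with $\|\mu^{*n}\|_1=1$, removes all dependence on $p$. Moreover $\|\mu^{*n}\sg\|_\infty\ge C^{-1}\|\mu^{*n}\|_{\pf_2}\ge C^{-1}r_{\pf_2(\Gamma)}(\mu)^n$. Hence the best this route can ever give is $c(\Gamma,\mu)\ge-2\log r_{\pf_2(\Gamma)}(\mu)$. That is just the $p=2$ case of the monotonicity in Theorem \ref{T:Avez entropy-p conv operators-I}, and it carries no entropy information.

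Concretely, take your target $\|\mu^{*n}\sg\|_\infty\le e^{-nh/2+o(n)}$, or even the weaker $e^{-nh/4+o(n)}$ that $h\le 2c$ would need. Evaluating at $\gamma=e$, where $\sg(e)=1$, forces $\mu^{*n}(e)\le e^{-nh/4+o(n)}$. This fails for the lazy discretization $(1-\epsilon)\delta_e+\epsilon\nu$ of any discretization $\nu$ with $h(\Gamma,\nu)>4$, once $\epsilon$ is small. Its return probabilities are at least $(1-\epsilon)^n$, while its entropy is only $\epsilon h(\Gamma,\nu)$.

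Truncating to a typical set does not rescue this. The discarded mass would have to be exponentially small in $n$, a large-deviation estimate that finite entropy and a finite logarithmic moment do not provide. Entropy enters only through weighted $\ell^{q'}$-norms with $q'\to 1$, that is, by interpolating the function spaces as in your first route.
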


\begin{proof}
Suppose that $\nu_\infty^K$ is the the unique $K$-invariant probability measure $G/P$. Then, $(G/P, \nu^K_\infty)$ is the Poisson boundary of $(G,\eta)$ which is also the Poisson boundary of $(\Gamma,\mu)$. 
Let $\fL$ be the Riemannian metric on $G$, and let 
\begin{align*}
  \sg:=\frac{(1+\fL)^{d}}{\Xi_{G/P}} \ \ (d>0).
\end{align*}
Once again, by applying \cite[Theorem 2.1]{Boyer 2}, there is $d>0$ such that 
$\ell^\infty(\Gamma,\sg)$ is a Banach algebra with respect to the convolution product (which, in particular, implies that $\sg$ is a weight on $\Gamma$) and we have
\begin{align*}
  \ell^\infty(\Gamma,\sg) \subseteq C^*_r(\Gamma).
\end{align*}
Thus, by the complex interpolation, 
for $1/p+1/q=1$, we have
\begin{align*}
  \ell^q(\Gamma,\sg_p) \subseteq \pf_q(\Gamma).
\end{align*}
It now follows from a similar argument given in the proof of Theorem \ref{T:Avez entropy-p conv operators-RD} (see also \cite[Theorem 5.2]{ASdLSW1}) that
\begin{align*}
 h_\sg(\Gamma,\mu)\leq \displaystyle\lim_{p\rightarrow \infty}-p\log r_{\pf_q(\Gamma)}(\mu).
\end{align*}
Hence, we have
\begin{align*}
 \lim_{p\rightarrow \infty}-p\log r_{\pf_q(\Gamma)}(\mu) &\geq  h_\sg(\Gamma,\mu) \\
& = h(\Gamma,\mu)-\Lc_\sg(\Gamma,\mu) \ \ \ \ (\text{by}\ \eqref{Eq:weighted Avez entropy-difference entropy and Lyap expo}) \\
& = h(\Gamma,\mu)-d\Lc_{1+\fL}(\Gamma,\mu)+\lim_{n\to \infty} \dfrac{1}{n}\sum_{\gamma\in \Gamma}\mu^{\ast n}(\gamma) \log \Xi_{G/P}(\gamma) \\
&= h(\Gamma,\mu) +\lim_{n\to \infty} \dfrac{1}{n}\sum_{\gamma\in \Gamma}\mu^{\ast n}(\gamma) \log \Xi_{G/P}(\gamma) \ \ (\text{by Theorem}\ \ref{T:vanishing Lyap expo-log of length function}) 
\end{align*}
Finally, 
by Theorems \ref{T:Square integrable-inverse Harich Chandra function-the general case} and \ref{T:Square integrable-inverse Harich Chandra function-Discretization},
\begin{align*}
 \lim_{n\to \infty} \dfrac{1}{n}\sum_{\gamma\in \Gamma}\mu^{\ast n}(\gamma) \log \Xi_{G/P}(\gamma)=-\frac{1}{2} h_\mu(G/P, \nu^K_\infty)=-\frac{1}{2}  h(\Gamma,\mu)
\end{align*}
so that 
$$2\lim_{p\rightarrow \infty}-p\log r_{\pf_q(\Gamma)}(\mu) \geq h(\Gamma,\mu).$$
The other side of the inequality of the theorem follows from Theorem \ref{T:Avez entropy-p conv operators-I}.
\end{proof}

\section{Furstenberg discretizations with finite moments}


We now give the proof of the existence of a Furstenberg discretization on \emph{cocompact} lattices stated in Theorem \ref{T:Furstenberg discretizasion-cocompact Lattice-The short version} (see also Definition \ref{D:Furstenberg discretization}). This provides an alternative proof of the result given in \cite[Proposition VI.4.1]{Marg1} with the advantage that our approach allows us to construct, in a systematic way, Furstenberg discretization measures satisfying finite moments of any order. We start with the following, which is \cite[Chapter VI, Lemma (A), page 207]{Marg1} with a slight generalization.


\begin{lem}\label{L:Margulis extensions}
Let $G$ be a locally compact, noncompact group, and let $\psi$ be a non-negative continuous function with compact support 
such that $\int_G \psi(s)ds=1$, and $\psi(e)>0$, and the set $\{s\in G: \psi(s)>0, \, \psi(s^{-1})>0 \}$ generates the group $G$. For every $s\in G$, there exists $\varepsilon>0$ and $n\in \N$ such that
\begin{align}\label{Eq:Positive difference-Conv powers}
   \delta_s*\psi^{*n}> \varepsilon \psi \textrm{ on } \supp\, \psi .
\end{align}
Suppose that $G$ is, moreover, noncompact. Then for every $\varepsilon_0 > 0$,
\begin{align}\label{Eq:Positive difference-Conv powers-boundedness}
\max \{n\in \N: \delta_{s}*\psi^{*n} \geq \epsilon \psi \ \text{for some}\ \varepsilon\geq \varepsilon_0 \ \text{and}\ s\in G \}
\end{align}
is finite.


\end{lem}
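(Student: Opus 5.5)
The plan is to prove the two assertions separately. The first comes from a positivity-and-compactness argument. The second comes from the decay of concentration functions of convolution powers on a noncompact group. Throughout, write $K:=\supp\,\psi$ (compact), $O:=\{t\in G:\psi(t)>0\}$ (open), and $U:=\{t\in G:\psi(t)>0,\ \psi(t^{-1})>0\}$. The set $U$ is open and symmetric, contains $e$ because $\psi(e)>0$, generates $G$ by hypothesis, and satisfies $U\subseteq O$. Note that $(\delta_s*\psi^{*n})(t)=\psi^{*n}(s^{-1}t)$.

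\emph{Step 1: where $\psi^{*n}$ is positive.} Since $\psi$ is continuous and non-negative, $\psi^{*n}$ is continuous. I claim it is strictly positive on the open set $O^n$. Indeed, if $x=x_1\cdots x_n$ with $x_i\in O$, then the integrand $\psi(y_1)\psi(y_1^{-1}y_2)\cdots$ in the iterated convolution is positive on a neighbourhood of $(x_1,x_1x_2,\dots)$, which has positive Haar measure. In particular $\psi^{*n}>0$ on $U^n$. Moreover $U^j\subseteq U^n$ for $j\le n$, because $e\in U$.

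\emph{Step 2: proof of \eqref{Eq:Positive difference-Conv powers}.} Since $U$ generates $G$ and $e\in U$, we have $G=\bigcup_k U^k$, an increasing union of open sets. Fix $s\in G$ and choose $k$ with $s^{-1}\in U^k$. By compactness choose $m$ with $K\subseteq U^m$. Then $s^{-1}K\subseteq U^{k+m}$. Put $n:=k+m$. The continuous function $t\mapsto\psi^{*n}(s^{-1}t)$ is strictly positive on the compact set $K$, so it has a minimum $c>0$ there. Any $\varepsilon<c/\|\psi\|_\infty$ then gives $\delta_s*\psi^{*n}>\varepsilon\psi$ on $K$.

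\emph{Step 3: proof that \eqref{Eq:Positive difference-Conv powers-boundedness} is finite.} Suppose $\delta_s*\psi^{*n}\ge\varepsilon\psi$ with $\varepsilon\ge\varepsilon_0$. Integrating over $G$ and using $\int\psi=1$ gives
\[
\eta_\psi^{*n}(s^{-1}K)=\int_K\psi^{*n}(s^{-1}t)\,dt\ \ge\ \varepsilon_0,
\]
where $\eta_\psi$ is the probability measure $\psi\,dt$. So it suffices to show that the concentration function $Q_n(K):=\sup_{g\in G}\eta_\psi^{*n}(gK)$ tends to $0$ as $n\to\infty$. Once $Q_n(K)<\varepsilon_0$, the same holds for all larger $n$, because $Q_{n+1}(K)\le Q_n(K)$. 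The monotonicity follows from
\[
\eta^{*(n+1)}(gK)=\int\eta^{*n}(y^{-1}gK)\,d\eta(y)\le Q_n(K).
\]

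The main obstacle is the decay $Q_n(K)\to 0$, and this is where noncompactness of $G$ enters. I would invoke the classical theorem on concentration functions (Csisz\'ar; Derriennic--Guivarc'h): for a probability measure on a noncompact lcsc group that is not supported in a coset of a compact subgroup, $\sup_g\mu^{*n}(gK)\to0$ for every compact $K$. It applies here because $\eta_\psi$ is absolutely continuous and its support generates $G$, which is noncompact. For a more self-contained route, I would first try this: since $e\in O$ and $\eta_\psi$ is spread out, if $Q_n(K)\ge\varepsilon_0$ for all $n$, then a weak$^*$ limit of translates of $\eta_\psi^{*n}$ yields a nonzero idempotent-like measure, hence a Haar measure on a compact subgroup. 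That subgroup would have to contain the generating set $U$, which contradicts noncompactness of $G$.
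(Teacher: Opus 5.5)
Your argument is correct, and it takes a different route from the paper's.

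\textbf{First assertion.} The paper simply cites Margulis's Lemma (A). You prove it directly from strict positivity of $\psi^{*n}$ on $U^n$ together with compactness of $K=\supp\,\psi$. That argument is self-contained and fine.

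\textbf{Second assertion.} Both proofs start from $\psi^{*n}\ge\varepsilon\,\delta_{s^{-1}}*\psi$, and then diverge.
\begin{itemize}
\item The paper pairs this with $\psi$ and appeals only to vague convergence $\psi^{*m}\to 0$, i.e.\ smallness of $\eta_\psi^{*m}(L)$ for each \emph{fixed} compact $L$.
\item You integrate to get $\eta_\psi^{*n}(s^{-1}K)\ge\varepsilon_0$ and appeal to decay of the concentration function $\sup_g\eta_\psi^{*n}(gK)$.
\end{itemize}
Your formulation is the one the lemma as stated actually needs. Since $s$ ranges over all of $G$, one needs $\eta_\psi^{*n}$ to be small on \emph{all} translates of $K$ uniformly. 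Vague convergence does not give this: $\delta_1$ on $\mathbb{Z}$ converges vaguely to $0$, yet its concentration function is identically $1$.

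In the paper's chain, this uniformity is hidden in the passage from $\varepsilon\,\psi*\check{\psi}(s)$ to $\varepsilon\|\psi*\check{\psi}\|_\infty$. That step is not justified as written, since the left side vanishes for $s$ outside the compact set $KK^{-1}$.

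What the paper's weaker input does deliver is the version with $s$ confined to a compact set $C$, because $\eta_\psi^{*n}(s^{-1}K)\le\eta_\psi^{*n}(C^{-1}K)$. That version is what the later construction of $\fL$ uses (well-definedness and local boundedness). The price of your fully uniform version is the heavier concentration-function theorem.

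\textbf{Two small points to tighten.}
\begin{itemize}
\item When you check the hypothesis of that theorem, ``absolutely continuous with generating support'' is not enough; $\delta_1$ on $\mathbb{Z}$ is again a counterexample. What you need is $\psi(e)>0$. If $\supp\,\eta_\psi\subseteq gH$ with $H$ compact, then $e\in gH$ forces $gH=H$. Hence $U\subseteq H$, so $H=G$, contradicting noncompactness.
\item Your ``self-contained'' alternative is only a sketch. Extracting a Haar measure of a compact subgroup from weak$^*$ limits of translates is the whole content of Csisz\'ar's theorem. The citation therefore has to carry that step.
\end{itemize}
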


\begin{proof}

The proof of \eqref{Eq:Positive difference-Conv powers} is given in \cite[Chapter VI, Lemma (A), page 207]{Marg1}. Now suppose that $G$ is not compact. For every $s\in G$, $\varepsilon>0$ and $n\in \N$ satisfying
$\delta_{s}*\psi^{*n} \geq \varepsilon \psi$, we have
$$\la \psi^{*n} , \psi \ra \geq \varepsilon \la \delta_{s^{-1}}*\psi , \psi \ra =\varepsilon \psi*\check{\psi}(s).$$
Hence 
$$\la \psi^{*n} , \psi \ra \geq \varepsilon \|\psi*\check{\psi}\|_\infty=\varepsilon \|\psi\|_2^2.$$
On the other hand, since $G$ is not compact and the support of $\psi$ generates $G$, then  $\text{w}^*-\lim_{m \to \infty} \psi^{*m}=0$, where the weak$^*$ limit is taken in $(M(G),C_0(G))$ (\cite[Theorem 2]{Muh}). In particular, 
$$\lim_{m\to \infty} \la \psi^{*n} , \psi \ra=0.$$ These two relations imply that if $\varepsilon$ is away from 0 with a fixed constant, then the set considered in \eqref{Eq:Positive difference-Conv powers-boundedness} cannot be unbounded.
\end{proof}

Next, we establish some auxiliary results that will serve as the foundation for the proof of our main theorem.

\begin{thm}\label{T:weight associated to a generating measure}
Let $G$ be a locally compact, noncompact group, and let $\psi$ be a non-negative continuous function with compact support such that $\int_G \psi(s)ds=1$, and $\psi(e)>0$, and the set $\{s\in G: \psi(s)>0, \psi(s^{-1})>0 \}$ generates the group $G$. Then there exists a function $\tau: G \to (0,1]$ such that $\tau^{-1}$ is a weight on $G$ with $\tau(e)=1$ and a proper locally bounded function $\fL:G\to \N$ with $\fL(e)=1$ such that 
for all $s\in G$,
\begin{align}\label{Eq:weight associated to compactly supp function}
  \delta_s*\psi^{*\fL(s)}\geq \tau(s)\psi.
\end{align}
\end{thm}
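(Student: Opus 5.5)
The plan is to build $\tau$ and $\fL$ together from the best constant available at each convolution power, rather than from the minimal admissible power. Defining $\fL(s)$ as the minimal $n$ in \eqref{Eq:Positive difference-Conv powers} would give no control on how the constants behave under products. For $s\in G$ and $n\in\N$ set
\[
\varepsilon_n(s):=\sup\{\varepsilon\in[0,1] : \delta_s*\psi^{*n}\geq \varepsilon\psi\},
\qquad
\tau(s):=\sup_{n\in\N}\varepsilon_n(s).
\]
By \eqref{Eq:Positive difference-Conv powers} in Lemma \ref{L:Margulis extensions}, $\tau(s)>0$. Applying the finiteness statement \eqref{Eq:Positive difference-Conv powers-boundedness} with $\varepsilon_0=\tau(s)/2$ shows that only finitely many $n$ satisfy $\varepsilon_n(s)\geq\tau(s)/2$. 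Hence the supremum over $n$ is a maximum, and we let $\fL(s)$ be an index attaining it. Since the supremum over $\varepsilon$ is attained (the condition is closed), \eqref{Eq:weight associated to compactly supp function} holds by construction. At $s=e$ we have $\varepsilon_1(e)=1$ (take $\psi^{*1}=\psi$), so $\tau(e)=1$ and we may choose $\fL(e)=1$.

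The key step is weak submultiplicativity of $\tau^{-1}$. First I will show that there is $c>0$ with $\psi*\psi\geq c\,\psi$. Because $\psi(e)>0$, we have $\{\psi>0\}\subseteq\{\psi>0\}\cdot\{\psi>0\}$, so the continuous function $\psi*\psi$ is strictly positive on a neighbourhood of the compact set $\supp\,\psi$. It is therefore bounded below there by some $c'>0$, and $c:=c'/\|\psi\|_\infty$ works. Now write $n=\fL(s)$ and $m=\fL(t)$. Since convolution by non-negative measures preserves inequalities between non-negative functions, I expect
\[
\delta_{st}*\psi^{*(n+m+1)}=\delta_s*\bigl(\delta_t*\psi^{*m}\bigr)*\psi*\psi^{*n}\ \geq\ \tau(t)\,\delta_s*\psi*\psi*\psi^{*n}
\ \geq\ c\,\tau(t)\,\delta_s*\psi^{*(n+1)}.
\]
The factors $\psi$ and $\psi^{*n}$ commute, being powers of the same function. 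So $\delta_s*\psi^{*(n+1)}=\bigl(\delta_s*\psi^{*n}\bigr)*\psi\geq\tau(s)\,\psi*\psi\geq c\,\tau(s)\psi$. Altogether $\tau(st)\geq\varepsilon_{n+m+1}(st)\geq c^2\tau(s)\tau(t)$, i.e.\ $\tau^{-1}(st)\leq c^{-2}\tau^{-1}(s)\tau^{-1}(t)$. (If the resulting constant exceeds $1$ we cap it; this only helps.)

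The remaining step is the regularity of $\tau^{-1}$ and $\fL$, and it is the part I expect to be most delicate. For local boundedness of $\tau^{-1}$, note that each $\psi^{*n}$ is continuous with compact support, hence uniformly continuous. So if $\delta_{s_0}*\psi^{*n}>\varepsilon\psi$ strictly on the compact set $\supp\,\psi$, the same inequality with $\varepsilon/2$ holds for $s$ in a neighbourhood of $s_0$. A compact set is then covered by finitely many such neighbourhoods, which bounds $\tau$ below on it. Given this lower bound $\varepsilon_0$ of $\tau$ on a compact set, \eqref{Eq:Positive difference-Conv powers-boundedness} bounds $\fL$ there, so $\fL$ is locally bounded. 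For properness, suppose $\delta_s*\psi^{*n}\geq\varepsilon\psi$ with $\varepsilon>0$. Evaluating at a point where $\psi>0$ forces $s\in\supp(\psi^{*n})\cdot(\supp\,\psi)^{-1}$, which is compact. Hence $\{\fL\leq N\}$ is relatively compact for every $N$. Measurability of $\tau$ and $\fL$ should follow from semicontinuity of each $\varepsilon_n$ and from choosing $\fL(s)$ as the least maximising index.
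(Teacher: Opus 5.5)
Your proposal is correct and follows essentially the paper's route. Like the paper, you take $\tau$ as the best constant over all convolution powers, get submultiplicativity by concatenating powers, get local lower bounds on $\tau$ from uniform continuity on the compact set $\supp\,\psi$, and take $\fL(s)$ as the least index attaining $\tau(s)$, bounded via the finiteness part of Lemma \ref{L:Margulis extensions}.

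The detour through $\psi*\psi\geq c\,\psi$ is unnecessary. The chain $\delta_{st}*\psi^{*(\fL(s)+\fL(t)-1)}=\delta_s*(\delta_t*\psi^{*\fL(t)})*\psi^{*(\fL(s)-1)}\geq\tau(t)\,\delta_s*\psi^{*\fL(s)}\geq\tau(s)\tau(t)\psi$ gives exact submultiplicativity, as in the paper. Your non-strict definition, which makes $\tau(e)=1$ immediate, and your properness argument tidy up points the paper's proof leaves loose. In the properness argument the compact set should read $\supp\,\psi\cdot(\supp\,\psi^{*n})^{-1}$, which is harmless.
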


\begin{proof}
 Now for every $s\in G$, we define
\begin{align}\label{Eq:Definition tau}
    \tau(s)=\sup\{ \epsilon\in (0,1) : \text{there is}\ n\in \N\ \text{such that}\  \delta_s*\psi^{*n}> \epsilon \psi\ \text{on}\ \supp\, \psi \}.
\end{align}
By Lemma \ref{L:Margulis extensions}, $\tau$ is well-defined. Clearly $\tau(e)=1$. Now we claim that 
\begin{align}\label{Eq:submultipicative relation of tau}
    \tau(st)> \tau(s)\tau(t) \ \ (s,t\in G).
\end{align}
Let $s,t\in G$. Pick $\epsilon_1,\epsilon_2\in (0,1)$ and $m,n\in \N$ such that 
$$\delta_s*\psi^{*m}> \epsilon_1 \psi \ , \   \delta_t*\psi^{*n}> \epsilon_2 \psi\ \text{on}\ \ \supp\, \psi.$$
Then 
$$\delta_{st}*\psi^{*(n+m-1)}\geq \epsilon_2 \left(\delta_s*\psi^{*m}\right) > \epsilon_1 \epsilon_2 \psi \ \ \text{on}\ \supp\, \psi.$$
Thus, by the definition of $\tau$ in \eqref{Eq:Definition tau}, 
$$\tau(st)> \epsilon_1 \epsilon_2.$$
By taking the suprimum over all possible choices of $\epsilon_1$ and $\epsilon_2$, the relation \eqref{Eq:submultipicative relation of tau} is obtained. Next, we show that $\tau$ is lower-semicontinuous on $G$. Let $s\in G$ and $0<\alpha<1$. By \eqref{Eq:Definition tau}, there is $n\in \N$ such that
$$\delta_s*\psi^{*n}> \alpha \tau(s) \psi \ \ \text{on}\ \supp\, \psi. $$
This
implies that 
$\delta_s*\psi^{*n}-\alpha \tau(s) \psi$ is continuous and positive on the compact set $\supp\, \psi$. Hence 
$$M=\inf \{(\delta_s*\psi^{*n})(t)-\alpha \tau(s) \psi(t) : t\in \supp\, \psi \}>0.$$
Now if we pick the compact neighbourhood $V$ of $e\in G$ such that 
$$\|\delta_{s'}*\psi^{*n}-\delta_s*\psi^{*n}\|_\infty <M \ \ \ (s'\in sV),$$
then, for every $t\in \supp\, \psi$ and $s'\in sV$, we must have 
\begin{align*}
  (\delta_{s'}*\psi^{*n})(t)-\delta_{s}*\psi^{*n}(t) & > - M\\  
  & \geq -\left[(\delta_s*\psi^{*n})(t)-\alpha \tau(s) \psi(t)\right] \\
  &= \alpha \tau(s) \psi(t)-(\delta_s*\psi^{*n})(t).
\end{align*}
Hence 
\begin{align*}
  (\delta_{s'}*\psi^{*n})(t) > \alpha \tau(s) \psi(t) \ \ (t\in \supp\, \psi, s'\in sV).
\end{align*}
In other words, by \eqref{Eq:Definition tau},
\begin{align*}
    \tau(s') > \alpha \tau(s) \ \ \ (s'\in sV).
\end{align*}
That is $\tau$ is lower-semicontinuous on $G$ so that it is Borel measurable and locally bounded below (meaning that $\inf_{t\in K} \tau(t)>0$ for all compact subset $K$ of $G$). We may now put $\om:=\tau^{-1}$
to obtain our desired weight. Now define the function $\fL:G\to \N$ as 
\begin{align}
 \fL(s)=\inf\{n\in \N: \delta_{s}*\psi^{*n} \geq \tau(s)\psi \} \ \ \ (s\in G).
\end{align}
Let us first show that $\fL$ is well-defined. 
Pick $s\in G$ and fix $\epsilon_0\in (0,\tau(s))$. 
By \eqref{Eq:Positive difference-Conv powers-boundedness}, 
\begin{align*}
   \max \{n\in \N: \delta_{s}*\psi^{*n} \geq \epsilon \psi \ \text{for some}\ \epsilon\in [\epsilon_0,\tau(s)) \}
\end{align*}
is finite. Hence we can find a sequence $\{\epsilon_k\}_{k\in \N}$ in $(\epsilon_0,\tau(s))$ and $n\in \N$
such that $\lim_{n\to \infty} \epsilon_k=\tau(s)$ and for every $k\in \N$, 
$$ \delta_{s}*\psi^{*n} \geq \epsilon_k \psi.$$
By taking a limit as $k$ tends to $\infty$, we have 
$$ \delta_{s}*\psi^{*n} \geq \tau(s) \psi.$$
That is $\fL$ is well-defined. Furthermore, since $\tau$ is locally bounded (below), it follows again from \eqref{Eq:Positive difference-Conv powers-boundedness} that $\fL$ is locally bounded.

\end{proof}

\begin{prop}\label{P:Furstenberg discretizasion-cocompact Lattice}
Let $G$ be a locally compact, noncompact group, let $\Gamma$ be a cocompact lattice in $G$ with a fundamental domain $W$, let $\om$ be a weight on $G$, and let $\tau$ is the function constructed in Theorem \ref{T:weight associated to a generating measure} in \eqref{Eq:Definition tau}. For every $\mu_0\in P(\Gamma)$ with finite support, consider the measurable map 
\begin{align*}
G\to P(\Gamma) \ \ , \ \ s \mapsto  \mu_s,    
\end{align*} 
given by 
$$\mu_{\gamma s}=\delta_{\gamma}*\mu_0 \ \ \ (\gamma\in \Gamma, s\in W).$$
Then the operator  
\begin{align*}
    T:L^1(G)\to \ell^1(\Gamma) 
\end{align*}
defined by
\begin{align}\label{Eq:Defn operator-Gamma to G}
   T(f)(\gamma)=\int_G \mu_s(\gamma)f(s)\tau(\gamma^{-1}s) ds \ \ \ \ (f\in L^1(G), \gamma\in \Gamma)
\end{align}
restrict to a bounded operator from $L^1(G,\om)$ into $\ell^1(\Gamma,\om)$. Moreover, there are positive constants $C$ and $D$ such that, for every $f\in L^1(G,\om)^+$,
\begin{align}
    C\|f\|_{\ell^1(\Gamma,\om)}\leq \|T(f)\|_{L^1(G,\om)}\leq D\|f\|_{\ell^1(\Gamma,\om)}.
\end{align}
\end{prop}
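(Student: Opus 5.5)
The plan is to write $T(f)(\gamma)$ out explicitly using the tiling $G=\bigsqcup_{\gamma'\in\Gamma}\gamma'W$ and the finite support of $\mu_0$. Then both norm inequalities reduce to comparing $\om(\gamma)$ with $\om(s)$, and bounding $\tau(\gamma^{-1}s)$ away from $0$ and $1$, for $s$ ranging over a fixed compact translate. I read the displayed inequality as intended: for $f\ge 0$,
\[
C\|f\|_{L^1(G,\om)}\le \|T(f)\|_{\ell^1(\Gamma,\om)}\le D\|f\|_{L^1(G,\om)}
\]
(the subscripts in the statement appear swapped). Measurability of $s\mapsto \mu_s$ is clear, since $W$ is a Borel fundamental domain and $\mu_s$ is constant on each tile $\gamma'W$. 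Let $F=\supp\,\mu_0$, which is finite, and set $K:=F^{-1}\overline{W}$. Since $\Gamma$ is cocompact we may take $\overline{W}$ compact, so $K$ is compact.

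\emph{Step 1 (reorganising the norm).} For $f\ge 0$ and $s=\gamma'w\in\gamma'W$ we have $\mu_s(\gamma)=\mu_0(\gamma'^{-1}\gamma)$. Substituting $\gamma=\gamma' g$ with $g\in F$ and using Tonelli gives
\[
\|T(f)\|_{\ell^1(\Gamma,\om)}=\sum_{\gamma'\in\Gamma}\int_{\gamma'W} f(s)\sum_{g\in F}\mu_0(g)\,\tau(g^{-1}w)\,\om(\gamma' g)\,ds,\qquad s=\gamma'w .
\]
All quantities are nonnegative, so the interchange of sums and integrals is legitimate.

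\emph{Step 2 (comparing weights).} Let $C_\om$ be the submultiplicativity constant and $a>0$ the lower bound of $\om$, and put $M:=\sup_{K\cup K^{-1}}\om<\infty$, which is finite by local boundedness. Since $\gamma'g=s\,(w^{-1}g)$ with $w^{-1}g\in K^{-1}$, we get $\om(\gamma'g)\le C_\om M\,\om(s)$. Similarly, $s=(\gamma'g)(g^{-1}w)$ with $g^{-1}w\in K$ gives $\om(s)\le C_\om M\,\om(\gamma'g)$.

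\emph{Step 3 (bounding $\tau$ on $K$), and conclusion.} We have $\tau\le 1$ by definition. The main point is the lower bound: we need $c:=\inf_K\tau>0$. For this I would invoke the proof of Theorem \ref{T:weight associated to a generating measure}, which shows that $\tau$ is strictly positive and lower semicontinuous, hence bounded below on compact sets. This is the only place where properties of $\tau$ beyond boundedness enter, and it is the step I regard as the main obstacle, since it relies on that lower-semicontinuity argument. Combining Steps 1--3 with $\sum_g\mu_0(g)=1$, the inner sum in Step 1 lies between $c\,(C_\om M)^{-1}\om(s)$ and $C_\om M\,\om(s)$. Integrating in $s$ then yields the two-sided estimate with $C=c/(C_\om M)$ and $D=C_\om M$.

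Boundedness of $T$ on all of $L^1(G,\om)$ follows from $|T(f)|\le T(|f|)$, applied together with the upper estimate.
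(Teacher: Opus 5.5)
Your proof is correct and follows essentially the same route as the paper. Both arguments tile $G$ by the translates $\gamma'W$, use $\tau\le 1$ for the upper bound, and use weak submultiplicativity plus local boundedness of $\om$ on a compact set to compare $\om(\gamma' g)$ with $\om(s)$; you also rightly read the displayed inequality as $C\|f\|_{L^1(G,\om)}\le\|T(f)\|_{\ell^1(\Gamma,\om)}\le D\|f\|_{L^1(G,\om)}$. The only minor difference is in the lower bound for $\tau(g^{-1}w)$. The paper splits it via the supermultiplicativity $\tau(g^{-1}w)\ge\tau(g^{-1})\tau(w)$ and then bounds $\tau$ below on $W$, whereas you bound $\tau$ below directly on the compact set $F^{-1}\overline{W}$. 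Your way is equally valid and is not really an obstacle, since local boundedness of $\tau^{-1}$ is already part of the statement of Theorem \ref{T:weight associated to a generating measure}.
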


\begin{proof}
Without the loss of generality, we may assume that $\om$ is submutiplicative. 
Since $0\leq \tau\leq 1$, it follows that $T$ is well-defined. Moreover, for every $f\in L^1(G)^+$, 
\begin{align*}
   \|T(f)\|_{\ell^1(\Gamma,\om)} & =\sum_{\gamma\in \Gamma} \int_G \mu_s(\gamma)f(s)\tau(\gamma^{-1}s)\om(\gamma) ds \\
   &\leq \sum_{\gamma_1\in \Gamma} \sum_{\gamma\in \Gamma} \int_{W} \mu_0(\gamma_1^{-1}\gamma)f(\gamma_1 s) \om(\gamma)ds  \ \ \ (\text{as}\ 0\leq  \tau \leq 1 )\\
    &= \sum_{\gamma_1\in \Gamma} \sum_{\gamma\in \Gamma} \int_{W} \mu_0(\gamma)f(\gamma_1 s)\om(\gamma_1\gamma)ds \\
     &\leq \sum_{\gamma\in \Gamma} \mu_0(\gamma)\om(\gamma)\sum_{\gamma_1\in \Gamma} \int_{W} f(\gamma_1 s)\om(\gamma_1 s)\om(s^{-1})  ds \ \ \ (\text{as}\ \om \ \text{is submultiplicative})\\
       &\leq D \int_{W} \sum_{\gamma_1\in \Gamma} f(\gamma_1 s)\om(\gamma_1 s) ds \\
       &=D \|f\|_{L^1(G,\om)},
\end{align*}
where
\begin{align*}
    D=\|\mu_0\|_{\ell^1(G,\om)}\sup \{\om(s^{-1}): s\in W\}.
\end{align*}
We note that $D$ is finite since $\mu_0$ has finite support, $W$ is compact and $\om$ is locally bounded. Similarly, since $\tau^{-1}$ and $\om$ are both multiplicative, we also have
\begin{align*}
   \|T(f)\|_{\ell^1(\Gamma,\om)} &= \sum_{\gamma_1\in \Gamma} \sum_{\gamma\in \Gamma} \int_{W} \mu_0(\gamma)f(\gamma_1 s)\tau(\gamma^{-1} s)\om(\gamma_1\gamma)ds \\
     &\geq \sum_{\gamma\in \Gamma} \frac{\mu_0(\gamma)\tau(\gamma^{-1})}{\om(\gamma^{-1})}\sum_{\gamma_1\in \Gamma} \int_{W} f(\gamma_1 s)\om(\gamma_1 s)
     \tau(s)\om(s)^{-1} ds\\ 
      &\geq C \int_{W} \sum_{\gamma_1\in \Gamma} f(\gamma_1 s)\om(\gamma_1 s) ds \\
       &=C \|f\|_{L^1(G,\om)},
\end{align*}
where
\begin{align*}
    C=\|\mu_0 \check{\tau}\check{\om}^{-1}\|_{\ell^1(G)}\inf \{\tau(s)\om(s)^{-1}: s\in W\}.
\end{align*}
We note that $C$ is finite since $\mu_0$ has finite support, $W$ is compact and $\tau^{-1}$ and $\om$ are locally bounded (by Theorem \ref{T:weight associated to a generating measure}).
\end{proof}


We are now ready to prove our main result concerning Furstenberg discretization on cocompact lattices. We present and prove a more detailed version of the theorem than the statement given in Theorem \ref{T:Furstenberg discretizasion-cocompact Lattice-The short version}. In our view, this expanded formulation provides greater insight into the underlying ideas and techniques of the proof. Moreover, it highlights the key components of our approach, which we hope can be suitably adapted to address the non-cocompact setting in future work.


\begin{thm}\label{T:Furstenberg discretizasion-cocompact Lattice}
Let $G$ be a locally compact, noncompact group, let $\Gamma$ be a cocompact lattice in $G$, and let $\eta\in B_c(G)$.
Then:\\
$(i)$ there exists a probability measure $\mu$ on $\Gamma$  with $\supp\, \mu=\Gamma$ such that every $(G,\eta)$-stationary space is also a $(\Gamma,\mu)$-stationary space. \\
(ii) $\mu$ can be constructed having the following formula: 
\begin{align}\label{Eq:Furstenberg Discritization measure}
    \mu(\gamma)=\sum_{\gamma_1\in \Gamma} \int_{W} \mu_0(\gamma_1^{-1}\gamma)\tau(\gamma^{-1}\gamma_1s) \phi(\gamma_1s) ds \ \ \ \ (\gamma\in \Gamma). 
\end{align}
Here $\mu_0$ is any finitely supported probability measure on $\Gamma$, $\tau$ is a function of a type constructed in Theorem \ref{T:weight associated to a generating measure} in \eqref{Eq:Definition tau}, $W$ is an arbitrary fundamental domain of $G$ with respect to $\Gamma$, and $\phi$ is continuous, non-negative function on $G$ (depending on $\eta$, $\tau$ and $\mu_0$) with $\phi\in L^1(G)$. \\
$(iii)$ 
The measure
$\mu$ constructed in \eqref{Eq:Furstenberg Discritization measure} has finite $m$-moment, for every $m\in \N$, with respect to any subadditive function $\sg$ on $G$ (after it is restricted to $\Gamma$). 
\end{thm}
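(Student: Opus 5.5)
The plan is to realize $\mu$ through a self-reproducing decomposition of a smooth measure on $G$, in the spirit of Furstenberg and Margulis.

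\textbf{Choice of $\psi$.} Since $\eta\in\Bc(G)$ is spread out, replacing $\eta$ by a suitable finite convex combination $\beta_0=\sum_{k\le N}c_k\eta^{*k}$ produces a measure whose density dominates a continuous function $\psi$. Any $(G,\eta)$-stationary $\xi$ is also $\beta_0$-stationary. We arrange $\psi\ge0$ with compact support, $\int\psi=1$, $\psi(e)>0$, and $\{\psi>0,\ \check\psi>0\}$ generating $G$. Then Theorem \ref{T:weight associated to a generating measure} applies and yields $\tau$ and $\fL$ with $\delta_s*\psi^{*\fL(s)}\ge\tau(s)\psi$.

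\textbf{Goal identity.} The aim is an identity of measures
\[
\beta=\sum_{\gamma\in\Gamma}\mu(\gamma)\,\delta_\gamma*\beta ,
\]
where $\beta$ is a probability measure built from convolution powers of $\psi$ (hence of $\eta$), so that $\beta*\xi=\xi$ for every stationary $\xi$. Given this, $\mu*\xi=\mu*\beta*\xi=\beta*\xi=\xi$, which proves (i).

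\textbf{Iterative construction.} To build $\mu$, start from $\psi\,ds=\int\psi(s)\delta_s\,ds$ and write each $s$ as $s=\gamma_1 s'$ with $s'\in W$. Fix $\gamma$ in the support of $\delta_{\gamma_1}*\mu_0$. Using the inequality of Theorem \ref{T:weight associated to a generating measure} at $\gamma^{-1}s$, we peel off from $\delta_s*\psi^{*\fL(\gamma^{-1}s)}$ the piece $\tau(\gamma^{-1}s)\,\delta_\gamma*\psi$. This piece is exactly a $\Gamma$-translate of $\psi$. The nonnegative remainder is again an absolutely continuous measure, and we repeat the procedure on it. Summing over all generations, the accumulated weights form a nonnegative function $\phi$: the total unpeeled density fed into each step. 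By construction,
\[
\mu(\gamma)=\sum_{\gamma_1}\int_W\mu_0(\gamma_1^{-1}\gamma)\,\tau(\gamma^{-1}\gamma_1s)\,\phi(\gamma_1s)\,ds ,
\]
that is, $\mu=T(\phi)$ in the notation of Proposition \ref{P:Furstenberg discretizasion-cocompact Lattice}. This gives formula (ii). The identity $\beta=\mu*\beta$ then holds for $\beta=\sum_k(\text{remainder}_k)*\psi^{*(\cdot)}$, after normalizing to total mass $1$.

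The remainder mass shrinks by a factor $1-c$ at each step, where $c=\inf\{\tau(\gamma^{-1}s)\}$ over the relevant compact range. This range is uniform because $\Gamma$ is cocompact: $\gamma^{-1}s$ always lies in the compact set $(\supp\mu_0)^{-1}W$, on which $\tau$ is bounded below by lower semicontinuity and $\fL$ is bounded. Consequently $\phi\in L^1(G)$, $\mu$ is a probability measure, and the iteration converges. Full support of $\mu$ follows because the supports spread out under repeated convolution with $\psi$ and $\psi$ generates $G$.

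\textbf{Moments (iii), the main obstacle.} Let $\om=(1+\sg)^m$, which is a weight. By Proposition \ref{P:Furstenberg discretizasion-cocompact Lattice}, $\|\mu\|_{\ell^1(\Gamma,\om)}\le D\|\phi\|_{L^1(G,\om)}$, so it suffices to show $\phi\in L^1(G,\om)$. The $k$-th generation of $\phi$ has mass at most $(1-c)^k$. It is supported in a set obtained by at most $kM$ convolutions with $\supp\psi$, where $M=\sup\fL$ on the compact range above. Subadditivity of $\sg$ together with local boundedness then gives $\sg\le A+BkM$ on that support. Hence
\[
\|\phi\|_{L^1(G,\om)}\lesssim\sum_k (1-c)^k(1+A+BMk)^m<\infty .
\]
The delicate point is verifying that the constants $c$ and $M$ are genuinely uniform over all generations. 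I would secure this by always re-reducing to the fundamental domain $W$, which is where cocompactness is used.
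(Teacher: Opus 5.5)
Your construction of $\mu$ is the paper's. Peeling $\tau(\gamma^{-1}s)\,\delta_\gamma*\psi$ off $\delta_s*\psi^{*\fL(\gamma^{-1}s)}$ is the operator $S$, and $\phi=\sum_n\psi_n$, $\mu=T(\phi)$. Your decay $\|\psi_{n+1}\|_1\le(1-c)\|\psi_n\|_1$, with $c=\inf\{\tau(u):u\in(\supp\mu_0)^{-1}\overline{W}\}>0$, is what the paper extracts from the lower bound in Proposition~\ref{P:Furstenberg discretizasion-cocompact Lattice} with $\om\equiv 1$. The gap is in how you derive (i). You reduce it to an identity $\beta=\mu*\beta$ with $\beta$ a probability measure on $G$, and no such $\beta$ exists. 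Push $\beta$ forward along the $\Gamma$-equivariant map $\gamma_1 s'\mapsto\gamma_1$ ($\gamma_1\in\Gamma$, $s'\in W$). This gives a probability $\bar\beta$ on $\Gamma$ with $\mu*\bar\beta=\bar\beta$. At a point $x$ where $\bar\beta$ is maximal, this forces $\bar\beta(g^{-1}x)=\bar\beta(x)>0$ for every $g$ in the semigroup generated by $\supp\mu$. That semigroup is infinite (e.g.\ if $\supp\mu=\Gamma$; $\Gamma$ is infinite because $G=\Gamma W$ is noncompact), contradicting $\sum_\gamma\bar\beta(\gamma)=1$. The underlying reason is that replacing $\delta_s$ by $\delta_s*\psi^{*\fL(\gamma^{-1}s)}$ is not an equality of measures. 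So your object $\sum_k(\text{remainder}_k)*\psi^{*(\cdot)}$ satisfies no such identity.

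The missing idea is to use stationarity inside each peeling step, as the paper does. Since $(\psi\,ds)*\xi=\xi$, one has $\delta_s*\psi^{*\fL(\gamma^{-1}s)}*\xi=\delta_s*\xi$, hence $\psi_{n+1}*\xi=\psi_n*\xi-T(\psi_n)*\xi$. Telescoping and $\|\psi_n\|_1\to 0$ then give $\mu*\xi=\psi*\xi=\xi$. This requires $\psi\,ds$ itself to fix every $(G,\eta)$-stationary $\xi$. So $\psi$ must be the density of $\eta^{*k_0}$ (continuous once $k_0\ge 2$) or of a convex combination of such powers, not a continuous function merely dominated by one; with $\int\psi=1$, domination would force equality anyway.

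With that repair the rest of your proposal works, and your proof of (iii) is a genuinely different and shorter route than the paper's. The paper inducts on $m$. It bounds $\|\mu*\psi\|_{L^1(G,\sg^m)}$ by a telescoping sum involving lower moments of $\phi$, deduces $\mu\in\ell^1(\Gamma,\sg^m)$, and returns to $\phi$ via the lower bound of Proposition~\ref{P:Furstenberg discretizasion-cocompact Lattice}. You instead control supports. With $M=\sup\{\fL(u):u\in(\supp\mu_0)^{-1}\overline{W}\}$ and $e\in\supp\psi$, one has $\supp\psi_n\subseteq(\supp\psi)^{1+(n-1)M}$. On this set $\sg\le(1+(n-1)M)\sup_{\supp\psi}\sg$, while $\psi_n$ has mass at most $(1-c)^{n-1}$, and only the upper bound of the Proposition is then needed. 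This avoids the reduction to $\sg=1+l_U$. It also yields $\sum_\gamma e^{\varepsilon\sg(\gamma)}\mu(\gamma)<\infty$ for small $\varepsilon>0$, since $e^{\varepsilon\sg}$ is a submultiplicative weight. Finally, your one-line argument for $\supp\mu=\Gamma$ is only heuristic, since mass is subtracted at every step; the paper does not spell this out either.
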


\begin{proof}

We first note that any $\eta$-stationary measure is also $\eta^{*k}$-stationary for every $k\in \N$. Hence any Furstenberg discretization $\mu$ of $\eta^{*k}$ is also a Furstenberg discretization of $\eta$. Also if we let $k_0\in \N$ to be large enough, then the density function $\psi$ of $\eta^{*k_0}$ (i.e. $\psi=\frac{d(\eta^{*k_0})}{ds}$) is continuous and compactly supported on $G$, and also satisfies the assumption of Theorem \ref{T:weight associated to a generating measure} (this is because $\eta$ is compactly supported with bounded density and spread out). Thus it suffices to construct $\mu$ as a discretization of $\eta^{*k_0}$. Accordingly, for the rest of the proof, we let $\psi$ to be as above and $\tau$ to be the function constructed in \eqref{Eq:Definition tau} with respect to $\psi$.


Let us fix a measurable map 
\begin{align}
G\to P(\Gamma) \ \ , \ \ s \mapsto  \mu_s.    
\end{align}  
We now define two bounded operators 
\begin{align*}
    T:L^1(G)\to \ell^1(\Gamma) \ , \ S:L^1(G)\mapsto L^1(G)
\end{align*}
as follows:
\begin{align}\label{Eq:Defn operator-Gamma to G}
   T(f)(\gamma)=\int_G \mu_s(\gamma)f(s)\tau(\gamma^{-1}s) ds \ \ \ \ (f\in L^1(G))
\end{align}
and
\begin{align}\label{Eq:Defn operator-G to Gamma}
   S(f)=\int_G\sum_{\gamma\in \Gamma} \mu_s(\gamma)  \left(\delta_s*\psi^{*\fL(\gamma^{-1}s)}-\tau(\gamma^{-1}s) \delta_\gamma*\psi \right) f(s)ds  \ \ \ \ \ (f\in L^1(G)) .
\end{align}
Here $\tau$ and $\fL$ are the functions constructed in Theorem \ref{T:weight associated to a generating measure}. Since $0\leq \tau\leq 1$, $\psi$ is continuous and non-negative with compact support, and $\int_G \psi(s)ds=1$, it follows that both $T$ and $S$ are well defined. Also, by \eqref{Eq:weight associated to compactly supp function} (applied to $S$),
\begin{align}\label{Eq:Defn operator-positive to positive}
    T(L^1(G)^+)\subseteq \ell^1(\Gamma)^+ \ , \ S(L^1(G)^+)\subseteq L^1(G)^+. 
\end{align}
We now build a sequence $\{\psi_n\}_{n\in \N}$ in $L^1(G)^+\cap C(G)$ as follows:
\begin{align}
   \psi_1:=\psi \ , \ \psi_{n+1}=S(\psi_n) \ \ (n\in \N). 
\end{align}
It follows from \eqref{Eq:Defn operator-G to Gamma} and \eqref{Eq:Defn operator-positive to positive} that
\begin{align}\label{Eq:inductive construction-prob measure}
    \psi_{n+1}=\int_G\sum_{\gamma\in \Gamma} \mu_s(\gamma)  \left(\delta_s*\psi^{*\fL(\gamma^{-1}s)}\right)\psi_n(s)ds -T(\psi_n)*\psi.  
\end{align}
Hence
\begin{align*}
  \|\psi_{n+1}\|_{L^1(G)}& =\int_G \psi_{n+1}(t)dt \\
  & =\int_G \sum_{\gamma\in \Gamma} \mu_s(\gamma) \int_G \left(\delta_s*\psi^{*\fL(\gamma^{-1}s)}\right)(t)dt \psi_n(s)ds -\int_G \left(T(\psi_n)*\psi\right)(t) dt \\
  &= \int_G\sum_{\gamma\in \Gamma} \mu_s(\gamma)  \psi_n(s)ds -\sum_{\gamma\in \Gamma} T(\psi_n)(\gamma) \ \ \ \left(\text{as}\ \int_G \psi(t)dt=1 \right) \\
&= \int_G \psi_n(s)ds -\sum_{\gamma\in \Gamma} T(\psi_n)(\gamma) \ \ \ \ (\text{as}\ \ \mu_s\in P(\Gamma)) \\
&=  \|\psi_{n}\|_{L^1(G)} -\sum_{\gamma\in \Gamma} T(\psi_n)(\gamma). 
\end{align*}
Therefore, $\{\|\psi_{n}\|_{L^1(G)}\}_{n\in \N}$ is decreasing and we have
\begin{align*}
   \left\|\sum_{n=1}^\infty T(\psi_n)\right\|_{\ell^1(\Gamma)} &=\sum_{n=1}^\infty \sum_{\gamma\in \Gamma} T(\psi_n)(\gamma)   \\
   & = \sum_{n=1}^\infty \left(\|\psi_{n}\|_{L^1(G)}-\|\psi_{n+1}\|_{L^1(G)}\right) \\   
   &=1-\lim_{n\to \infty} \|\psi_n\|_{L^1(G)}.
\end{align*}
In other words, if we let
\begin{align}\label{Eq:Furstenberg disc-construction-I}
   \mu:=\sum_{n=1}^\infty T(\psi_n) \ \ \text{and}\ \ \phi:=\sum_{n=1}^\infty \psi_n, 
\end{align}
then, by also applying Proposition \ref{P:Furstenberg discretizasion-cocompact Lattice},
\begin{align}\label{Eq:Furstenberg disc-construction-II}
 \mu\in \ell^1(\Gamma)^+ , \phi\in L^1(G), \ \textrm{and} \ \ \|\mu\|_{\ell^1(\Gamma)}= 1-\lim_{n\to \infty} \|\psi_n\|_{L^1(G)}=1.
\end{align}
In particular, $\mu$ is a probability measure on $\Gamma$ so that (ii) holds.

Now suppose that $(X,\xi)$ is a $(G,\mu)$-stationary space. If we consider positive measures
\begin{align*}
  \forall n\in \N: \eta_n=\psi_nds,   
\end{align*}
then, by \eqref{Eq:inductive construction-prob measure} and the fact that $\xi$ is $\eta^{*k_0}$-stationary and $d\eta^{*k_0}=\psi ds$, we obtain
\begin{align*}
    \psi_{n+1}*\xi=\psi_n*\xi-T(\psi_n)*\xi. 
\end{align*}
Hence
\begin{align}\label{Eq:Furstenberg disc-construction-III}
    \mu*\xi=\xi-\lim_{n\to \infty} \psi_n*\xi=\xi,
\end{align}
as $\lim_{n\to \infty} \|\psi_n\|_{L^1(G)}=0$. Thus $\mu$ is our desired measure, i.e. $\xi$ is $\mu$-stationary so that (i) holds.


(iii) Let us first fix some notations. For any weight $\om$ on $G$, let
 \begin{align*}
   \fW_\om:=\sup \left\{\om(t)^{\fL(\gamma^{-1}s)} : t\in \supp\, \psi, \gamma\in \supp\, \mu_0, s\in W \right \}.
\end{align*}
We note that $\fW_\om$ is finite since $W$ and $\supp\, \psi$ are compact, $\supp\, \mu_0$ is finite, and $\om$ and $\fL$ are locally bounded.
Let $s\in G$ and write $s=\gamma_1s_1$ with $\gamma_1\in \Gamma$ and $s_1\in W$. Then we have 
   \begin{align*}
  \int_G  \sum_{\gamma\in \Gamma} \mu_s(\gamma) \left(\psi^{*\fL(\gamma^{-1}s)}\right)(t)\om(t)dt  &= \int_G  \sum_{\gamma\in \Gamma}  \mu_0(\gamma_1^{-1}\gamma) \left(\psi^{*\fL(\gamma^{-1}\gamma_1 s_1)}\right)(t) \om(t) dt  \\
       &= \int_G \sum_{\gamma\in \Gamma} \mu_0(\gamma)  \left(\psi^{*\fL(\gamma^{-1}s_1)}\right)(t)\om(t) dt\\
        & \leq \fW_\om. 
       \end{align*}
       Hence 
         \begin{align}\label{Eq:upper bounded-convolution of prob mesures}
  \sup \left\{\int_G \sum_{\gamma\in \Gamma} \mu_s(\gamma)\left(\psi^{*\fL(\gamma^{-1}s)}\right)(t)\om(t)dt : s\in G \right\}\leq \fW_\om.
\end{align}
Now let $\sg$ be a subadditive function on $G$. Since $G$ is compactly generated, any subadditive function is dominated by a multiple of the length function coming from a fixed generating compact symmetric subset of $G$. Hence it suffices to prove our claim when $\sg=1+l_U$, where $U$ is a (fixed) generating compact symmetric subset of $G$ and $l_U$ is its corresponding length function. To prove the claim, we apply induction on $m\in \N\cup \{0\}$ to show that for
\begin{align*}
    \phi=\sum_{n=1}^\infty \psi_{n} \ \ \text{and}\ \ \mu=T(\phi), 
\end{align*}
we have that
\begin{align}\label{Eq:m-moment condition-cocompact lattice}
    \phi\in L^1(G,\sg^m) \ \ \text{and} \ \ \mu\in \ell^1(G,\sg^m).
\end{align}
The result holds for $m=0$ by part (ii). Now suppose that it is true for every $k=0,\ldots, m-1$ for some $m\geq 1$. We have
\begin{align*}
& \int_G \int_G \sum_{\gamma\in \Gamma} \mu_s(\gamma) \left(\delta_s*\psi^{*\fL(\gamma^{-1}s)}\right)(t)\sg(t)^m \psi_n(s) dt ds \\ 
  &=\int_G \int_G \sum_{\gamma\in \Gamma} \mu_s(\gamma) \left(\psi^{*\fL(\gamma^{-1}s)}\right)(t)\sg(st)^m\psi_n(s) dt ds \\
   &\leq \int_G \int_G \sum_{\gamma\in \Gamma} \mu_s(\gamma) \left(\psi^{*\fL(\gamma^{-1}s)}\right)(t)(\sg(s)+\sg(t))^m\psi_n(s) dt ds \\
    &=\sum_{k=0}^m a_k \int_G  \int_G \sum_{\gamma\in \Gamma} \mu_s(\gamma)\left(\psi^{*\fL(\gamma^{-1}s)}\right)(t)\sg(t)^k\sg(s)^{m-k}\psi_n(s) dt ds \\
     &\leq \int_G \psi_n(s) \sg(s)^m ds 
      +\sum_{k=1}^m a_k \fW_{\sg^k} \int_G \sg(s)^{m-k}\psi_n(s) ds \ \ \ (\text{by}\ \eqref{Eq:upper bounded-convolution of prob mesures}) \\
      &= \|\psi_{n}\|_{L^1(G,\sg^m)} 
      +\sum_{k=1}^m a_k \fW_\sg^k \|\psi_{n}\|_{L^1(G,\sg^{m-k})}.
\end{align*}
By summarizing what we have above, we obtain  
\begin{align*}
  \|\psi_{n+1}\|_{L^1(G,\sg^m)}  +\|T(\psi_n)*\psi\|_{L^1(G,\sg^m)} & =\int_G \psi_{n+1}(t)\sg(t)^mdt +\int_G \left(T(\psi_n)*\psi\right)(t) \sg(t)^m dt\\
  &= \int_G  \sum_{\gamma\in \Gamma} \mu_s(\gamma) \left(\delta_s*\psi^{*\fL(\gamma^{-1}s)}\right)(t)\sg(t)^m\psi_n(s) dt ds \\
    & \leq \|\psi_{n}\|_{L^1(G,\sg^m)} 
      +\sum_{k=1}^m a_k \fW_\sg^k \|\psi_{n}\|_{L^1(G,\sg^{m-k})}.
\end{align*}
Hence, by induction on $n$, each $\|\psi_{n}\|_{L^1(G,\sg^m)}$ is finite and we have
\begin{align*}
\|T(\psi_n)*\psi\|_{L^1(G,\sg^m)} \leq  \sum_{k=1}^m a_k \fW_\sg^k \|\psi_{n}\|_{L^1(G,\sg^{m-k})} +\|\psi_{n}\|_{L^1(G,\sg^m)}-\|\psi_{n+1}\|_{L^1(G,\sg^m)}. 
\end{align*}
Thus, since $\phi:=\sum_{n=1}^\infty \psi_{n}$ and $\mu=\sum_{n=1}^\infty T(\psi_n)$,
\begin{align*}
\|\mu*\psi \|_{L^1(G,\sg^m)} &= \sum_{n=1}^\infty \|T(\psi_n)*\psi\|_{L^1(G,\sg^m)} \\
& \leq \sum_{n=1}^\infty  \left[\sum_{k=1}^m a_k \fW_\sg^k \|\psi_{n}\|_{L^1(G,\sg^{m-k})}+\|\psi_{n}\|_{L^1(G,\sg^m)}-\|\psi_{n+1}\|_{L^1(G,\sg^m)} \right] \\
&\leq \sum_{k=1}^m a_k \fW_\sg^k \|\phi\|_{L^1(G,\sg^{m-k})}+\|\psi\|_{L^1(G,\sg^m)}. 
\end{align*}
By our induction assumption, the quantity on the right-hand side of the above formula is finite so that we must have
$\mu*\psi\in L^1(G,\sg^m)$. However, 
since for every $s,t\in G$, 
$$\sg(st)^m\leq (\sg(s)+\sg(t))^m\leq 2^m \sg(s)^m \sg(t)^m,$$
we get
\begin{align*}
2^m\int_G \left(\mu*\psi\right)(t)\sg^m(t) dt  &= 2^m\int_G\sum_{\gamma\in \Gamma} \mu(\gamma) \psi(t) \sg(\gamma t)^m dt \\
&\geq  \int_G\sum_{\gamma\in \Gamma} \mu(\gamma) \psi(t) \sg(\gamma)^m\sg(t^{-1})^{-m} dt \\
&= \|\mu\|_{\ell^1(\Gamma,\sg^m)}\int_G \psi(t) \sg(t^{-1})^{-m} dt. 
\end{align*}
Thus $\mu\in \ell^1(\Gamma, \sg^m)$ so that, by Proposition \ref{P:Furstenberg discretizasion-cocompact Lattice}, $\phi\in L^1(G,\sg^m).$ Thus our induction assumption also holds for $m$. This completes the proof.
\end{proof}

\end{document}